\numberwithin{equation}{section}
\newtheorem{theorem}{Theorem}[section]
\newtheorem{proposition}[theorem]{Proposition}
\newtheorem{lemma}[theorem]{Lemma}
\newtheorem{corollary}[theorem]{Corollary}
\theoremstyle{remark}
\newtheorem{remark}{Remark}[section]
\theoremstyle{definition}
\newtheorem{definition}{Definition}
\newtheorem{example}{Example}
\newcommand\underrel[2]{\mathrel{\mathop{#2}\limits_{#1}}}
\title[Spectrum and ergodicity of a neutral Moran model]{On the spectrum and ergodicity of a neutral multi-allelic Moran model}
\author{\text{Josu\'e Corujo}}
\address[1]{Institut de Recherche Mathématique Avancée, UMR 7501 Université de Strasbourg et CNRS, 7 rue René-Descartes, 67000 Strasbourg, France}
\email{josue.corujorodriguez@math.unistra.fr}
\date{Mars 2023}
\begin{document}

\begin{abstract}
	\noindent The purpose of this paper is to provide a complete description of the eigenvalues of the generator of a neutral multi-type Moran model, and the applications to the study of the speed of convergence to stationarity.
	The Moran model we consider is a non-reversible in general, continuous-time Markov chain with an unknown stationary distribution.
	Specifically, we consider $N$ individuals such that each one of them is of one type among $K$ possible allelic types. 
	The individuals interact in two ways: by an independent irreducible mutation process and by a reproduction process, where a pair of individuals is randomly chosen, one of them dies and the other reproduces. 
	Our main result provides explicit expressions for the eigenvalues of the infinitesimal generator matrix of the Moran process, in terms of the eigenvalues of the jump rate matrix. 
	As consequences of this result, we study the convergence in total variation of the process to stationarity and show a lower bound for the mixing time of the Moran process.
	Furthermore, we study in detail the spectral decomposition of the neutral multi-allelic Moran model with parent independent mutation scheme, which is the unique mutation scheme that makes the neutral Moran process reversible. 
	Under the parent independent mutation, we also prove the existence of a cutoff phenomenon in the chi-square and the total variation distances when initially all the individuals are of the same type and the number of individuals tends to infinity. 
	Additionally, in the absence of reproduction, we prove that the total variation distance to stationarity of the parent independent mutation process when initially all the individuals are of the same type has a Gaussian profile. 
\end{abstract}

\keywords{neutral multi-allelic Moran process; interacting particle system; convergence rate to stationarity; finite continuous-time Markov chains; multivariate polynomial eigenfunctions; mixing times; cutoff}

\subjclass[2020]{Primary 60J27; Secondary 37A30, 92D10, 33C50}

\maketitle

% {\footnotesize \tableofcontents}

\section{Introduction and main results}

This paper is devoted to the study of a continuous-time Markov model of $N$ particles on $K$ sites with interaction, which is known as the neutral multi-allelic Moran model in the population genetics literature \cite{etheridge_coalescent_2009}: 
the $K$ sites correspond to $K$ allelic types in a population of $N$ individuals. 
The state space of the process is the $K$-dimensional $N$-discrete simplex: 
\begin{equation}\label{def:state_spaceEKN}
	\mathcal{E}_{K,N} := \left\{\eta \in \left[ N \right]_0^K: \; |\eta| = N\right\},
\end{equation}
where $\left[ N \right]_0 := \{0,1,\dots, N\}$ and $|\cdot|$ stands for the sum of elements in a vector.
The set $\mathcal{E}_{K,N}$ is a finite set with cardinality $\operatorname{Card}(\mathcal{E}_{K,N}) = \binom{K - 1 + N}{N}$.
The process is in state $\eta \in \mathcal{E}_{K,N}$ if there are $\eta(k) \in \left[ N \right]_0$ individuals with allelic type $k \in \left[ K \right]  := \{1,2, \dots, K\}$.
Consider $Q = (\mu_{i,j})_{i,j= 1}^K$ the infinitesimal rate matrix of an irreducible Markov chain on $\left[ K \right]$, which is called the \emph{mutation matrix} of the Moran process. The infinitesimal generator of the neutral multi-allelic Moran process, denoted $\mathcal{Q}_{N,p}$, acts on a real function $f$ on $\mathcal{E}_{K,N}$ as follows:
\begin{equation}\label{def:generatorQKNp}
	\big(\mathcal{Q}_{N,p} f \big)  (\eta) := \sum_{i,j \in \left[ K \right]} \eta(i) \left( \mu_{i,j} + \frac{p}{N} \, \eta(j)  \right) \left[ f(\eta - \mathbf{e}_i + \mathbf{e}_j) - f(\eta)\right]
,
\end{equation}
for all $\eta \in \mathcal{E}_{K,N}$, where $\mathbf{e}_k$ is the $k$-th canonical vector of $\mathbb{R}^K$ (cf.\ \cite{etheridge_coalescent_2009}). 
In words, $\mathcal{Q}_{N,p}$ drives a process of $N$ individuals, where each individual has one of $K$ possible types of alleles and where the type of the individual changes following two processes: a mutation process where individuals mutate independently of each other and a Moran type reproduction process, where the individuals interact. The $N$ individuals mutate independently from type $i \in \left[ K \right]$ to type $j \in \left[ K \right] \setminus \{i\}$ with rate $\mu_{i,j}$. 
In addition, with uniform rate $p \ge 0$, one of the $N$ individuals is uniformly chosen to be removed from the population and another one, also randomly chosen, is duplicated. 
Note that the transitions of an individual due to a reproduction is not independent of the position of the other individuals. 
As in the original model, introduced by Moran \cite{moran_random_1958}, the same individual removed from the population can be duplicated, in this case the state of the system does not change. 
In the variation of the model where the removed individual cannot be duplicated, the factor $\frac{p}{N}$ in (\ref{def:generatorQKNp}) must be replaced by $\frac{p}{N-1}$. 

Note that $\mathcal{Q}_{N,p}$ can be decomposed as
$\mathcal{Q}_{N,p} = \mathcal{Q}_{N} + \frac{p}{N} \, \mathcal{A}_{N}$,
where $\mathcal{Q}_{N}$ and $\mathcal{A}_{N}$ are also infinitesimal generators of Markov chains acting on every $f \in \mathbb{R}^{\mathcal{E}_{K,N}}$ as follows 
\begin{align}
	\big( \mathcal{Q}_{N} f \big)(\eta) &:= \sum_{i,j \in \left[ K \right]} \eta(i) \mu_{i,j} \left[ f(\eta - \mathbf{e}_i + \mathbf{e}_j ) - f(\eta)\right], \label{def:generatorQKN} \\
	\big( \mathcal{A}_{N} f \big)(\eta) &:= \sum_{i,j \in \left[ K \right]} \eta(i) \eta(j) \left[ f(\eta - \mathbf{e}_i + \mathbf{e}_j ) - f(\eta)\right],\label{def:reproduction}
\end{align}
for every $\eta \in \mathcal{E}_{K,N}$.
The processes driven by $\mathcal{Q}_{N}$ and $\mathcal{A}_{N}$ are called the \emph{mutation process}
and the \emph{reproduction process}, respectively. In words, $\mathcal{Q}_{N}$ models the dynamic of $N$ indistinguishable particles, where each one moves among $K$ sites according to the process generated by the mutation rate matrix $Q$. 
This process is usually called the \emph{compound chain}, cf.\ \cite{zhou_composition_2009}. 
On the other hand, $\mathcal{A}_{N}$ models the dynamic where at uniform rate two individuals are randomly chosen and one of them changes its type to the type of the other one.
This paper is devoted to the study of the spectrum of $\mathcal{Q}_{N}$, $\mathcal{A}_{N}$ and $\mathcal{Q}_{N,p}$, and of the convergence to stationarity of the generated Markov processes.
Before stating our main results in this direction, let us establish some notation.

We recall that if $V_n \in \mathbb{R}^K, \; 1 \leq n \leq N$ are $N$ vectors in $\mathbb{R}^K$, their tensor product is the vector $V_1 \otimes V_2 \otimes \dots \otimes V_N$ defined by
\(
(V_1 \otimes V_2 \otimes \dots \otimes V_N )(k_1,k_2,\dots,k_N) := V_1(k_1) V_2(k_2) \dots V_N(k_N),
\)
for all $1 \leq k_n \leq K$ and $1 \leq n \leq N$. 
The tensor $V_1 \otimes V_2 \otimes \dots \otimes V_N$ can be considered as a function on $\left[ K \right]^N$. 
Actually, throughout this paper we completely identify a real function $f$ on $\left[ K \right]^N$ and the tensor vector $V_f$ such that $V_f(k_1,k_2,\dots,k_N) = f(k_1,k_2,\dots,k_N)$, for all $(k_1,k_2,\dots,k_N) \in \left[ K \right]^N$.

Let us denote by $\sigma$ a permutation on $\left[ N \right]$, i.e.\ an element of the symmetric group $\mathcal{S}_N$. 
Then, the permutation of $f \in \mathbb{R}^{\left[ K \right]^N}$ by $\sigma$, denoted by $\sigma f$, is defined by
\[ 
\sigma f : (k_1,k_2,\dots, k_N) \mapsto f(k_{\sigma (1)}, k_{\sigma (2)}, \dots k_{\sigma (N)}),
\]
for all $(k_1,k_2,\dots, k_N) \in \left[ K \right]^N$.
In particular, for $V_1, V_2, \dots, V_N \in \mathbb{R}^{N}$ we have
\[
\sigma (V_1 \otimes V_2 \otimes \dots \otimes V_N ) = 
V_{\sigma^{-1} (1)} \otimes V_{\sigma^{-1} (2)} \otimes \dots \otimes V_{\sigma^{-1} (N)}.
\]
A real function $f$ on $\left[ K \right]^N$ is symmetric if $f = \sigma f$, for all $\sigma$ in $\mathcal{S}_N$. 
Moreover, every function $f$ on $\left[ K \right]^N$ can be symmetrised by the projector $\operatorname{Sym}$, defined as follows:
\begin{equation} \label{def:sim_operator}
\operatorname{Sym}: \, f \mapsto \overline{f} = \frac{1}{N!} \sum_{\sigma \in \mathcal{S}_N} \sigma f. 
\end{equation}
Symmetric functions on $\left[ K \right]^N$ are highly important in the sequel because of their relation to the functions on $\mathcal{E}_{K,N}$. 
Consider the application $\psi_{K,N} : \mathcal{E}_{K,N}  \rightarrow \left[ K \right]^N$ defined by
\begin{equation} \label{def:psiKN}
\psi_{K,N}: \eta \mapsto (\underbrace{1,1,\dots,1}_{\eta(1)},
\underbrace{2,2,\dots,2}_{\eta(2)}, \dots, \underbrace{K,K,\dots,K}_{\eta(K)}),
\end{equation}
where the number of $k$ in $\overbrace{k,k,\dots,k}^{\eta(k)}$ is set to $0$ if $\eta(k)=0$.
Note that for every symmetric function $f$ on $\left[ K \right]^N$, the function $\tilde{f} := f \circ \psi_{K,N}$ on $\mathcal{E}_{K,N}$ is well defined.
Let $U_0$ be the all-one vector in $\mathbb{R}^K$ and $U_1, U_2, \dots, U_{K-1} \in \mathbb{R}^K$ such that 
\(
\mathcal{U} := \{U_0, U_1, \dots, U_{K-1}\}
\)
is a basis of $\mathbb{R}^K$. Note that this is the type of basis given by the eigenvectors of a diagonalisable rate matrix of dimension $K$. 
For every $\eta \in \mathcal{E}_{K-1,L}$, for $1 \le L \le N$, let us also denote by $U_{\eta} \in \mathbb{R}^{\left[ K \right]^N}$, $V_{\eta} \in \operatorname{Sym} \big( \mathbb{R}^{\left[ K \right]^N} \big)$ and $\tilde{V}_{\eta} \in \mathbb{R}^{\mathcal{E}_{K,N}}$ the vectors:
\begin{align}
U_{\eta} &:= U_{k_1} \otimes U_{k_2} \otimes \dots \otimes U_{k_L}\otimes \underbrace{U_0 \otimes \dots \otimes U_0}_{N-L \text{ times}}, \label{def:Ueta}\\
V_{\eta} &:= \operatorname{Sym} (U_{\eta}), \label{def:Veta}\\
\tilde{V}_{\eta} &:= V_{\eta} \circ \psi_{K,N}, \label{def:Veta_tilde}
\end{align}
where $(k_1, k_2, \dots, k_L) = \psi_{K-1,L}(\eta)$, for every $\eta \in \mathcal{E}_{K-1,L}$ and $L \in \left[ N \right]$. 
In Section \ref{sec:two_model} we analyse the link between the spaces $\operatorname{Sym} \left( \mathbb{R}^{\left[ K \right]^N} \right)$ and $\mathbb{R}^{\mathcal{E}_{K,N} }$, and we clarify the nature of the definitions previously introduced.  
The next theorem explains the connection between the eigenstructures of $Q$ and $\mathcal{Q}_{N}$
\begin{theorem}[Eigenstructure of $\mathcal{Q}_{N}$] \label{thm:eigenvalues_generalisation}
	Assume $K \ge 2$, $N \ge 1$. Let $\mathcal{U} = \{U_0, U_1, \dots, U_{r-1}\}$ be a set of $r$ linearly independent right eigenvectors of $Q$ such that $U_0$ is the all-one vector. Let $\lambda_0 = 0,\lambda_1, \dots, \lambda_{K-1}$ be the $K$ complex roots of the characteristic polynomial of $Q$, counting algebraic multiplicities, such that $Q U_k = \lambda_k U_k$, for $k \in \{0,1, \dots, r-1\}$.
	Consider $\lambda_{\eta}$ defined as follows
	\begin{equation} \label{eq:lameta}
		\lambda_\eta :=  \sum_{k=1}^{K-1} \eta(k) \lambda_k, \text{ for every } \eta \in \bigcup\limits_{L=1}^{N} \mathcal{E}_{K-1,L}.
	\end{equation}
	Then,
	\begin{itemize}
		\item[(a)] The non-zero eigenvalues of $\mathcal{Q}_{N}$ are given by
		$\lambda_\eta$, for all $\eta \in \bigcup\limits_{L=1}^{N} \mathcal{E}_{K-1,L}$.
		\item[(b)] Every function $\tilde{V}_\eta$, as defined in (\ref{def:Veta_tilde}), for any $\eta \in \bigcup\limits_{L=1}^{N} \mathcal{E}_{K-1,L}$ satisfying $\eta({r}) =  \dots = \eta({K-1}) = 0$ is a right eigenfunction of $\mathcal{Q}_{N}$ such that $\mathcal{Q}_{N} \tilde{V}_{\eta} = \lambda_{\eta} \tilde{V}_{\eta}$. 
		\item[(c)] In particular, if $Q$ is diagonalisable, then $\mathcal{Q}_{N}$ is diagonalisable.
	\end{itemize}
\end{theorem}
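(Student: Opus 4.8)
The plan is to realise $\mathcal{Q}_{N}$ as the restriction of a much simpler operator on the tensor space $\big(\mathbb{R}^K\big)^{\otimes N}\cong\mathbb{R}^{[K]^N}$, namely the generator $\mathcal{L}:=\sum_{n=1}^N Q^{(n)}$ of $N$ \emph{independent} $Q$-particles, where $Q^{(n)}:=\mathrm{Id}\otimes\cdots\otimes Q\otimes\cdots\otimes\mathrm{Id}$ acts on the $n$-th factor. A direct computation (the content of Section~\ref{sec:two_model}) shows that $\mathcal{L}$ commutes with every permutation $\sigma\in\mathcal{S}_N$, hence preserves the symmetric subspace $\operatorname{Sym}\big(\mathbb{R}^{[K]^N}\big)$, and that the identification $g\mapsto g\circ\psi_{K,N}$ intertwines $\mathcal{L}\big|_{\operatorname{Sym}}$ with $\mathcal{Q}_{N}$: evaluating $\mathcal{L}g$ at $\psi_{K,N}(\eta)$ and grouping the $N$ coordinates according to the site they occupy turns the sum over particles into the sum over types weighted by $\eta(i)$ appearing in (\ref{def:generatorQKN}). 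Consequently $\mathcal{Q}_{N}$ and $\mathcal{L}\big|_{\operatorname{Sym}}$ share the same eigenvalues with algebraic multiplicities, and $\tilde g$ is an eigenfunction of $\mathcal{Q}_{N}$ whenever $g$ is one of $\mathcal{L}\big|_{\operatorname{Sym}}$. I would carry out the whole analysis on $\mathcal{L}\big|_{\operatorname{Sym}}$ and transport the conclusions back through $\psi_{K,N}$.

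For part (b) I would use that the summands $Q^{(n)}$ act on distinct factors, so they pairwise commute and every pure tensor of eigenvectors is an eigenvector: if $QU_{k}=\lambda_k U_k$, then $\mathcal{L}\,U_\eta=\big(\sum_{n}\lambda_{k_n}\big)U_\eta=\lambda_\eta U_\eta$ for $U_\eta$ as in (\ref{def:Ueta}), the $N-L$ trailing factors $U_0$ contributing $\lambda_0=0$. Since $\mathcal{L}$ commutes with $\operatorname{Sym}$, applying $\operatorname{Sym}$ gives $\mathcal{L}V_\eta=\lambda_\eta V_\eta$ with $V_\eta$ as in (\ref{def:Veta}); the vector $V_\eta$ is nonzero because, expanded in the tensor basis $\{U_{j_1}\otimes\cdots\otimes U_{j_N}\}$ built from the independent family $\mathcal{U}$, it is the (nonvanishing) orbit sum of the basis tensor $U_\eta$. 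Transporting through $\psi_{K,N}$ yields $\mathcal{Q}_{N}\tilde V_\eta=\lambda_\eta\tilde V_\eta$ with $\tilde V_\eta\neq 0$, which is exactly (b).

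For part (a) I would drop the diagonalisability assumption and instead pick a Schur basis $W_0,\dots,W_{K-1}$ of $\mathbb{C}^K$ putting $Q$ in upper-triangular form with diagonal $\lambda_0,\dots,\lambda_{K-1}$, with $W_0$ attached to $\lambda_0=0$. The induced tensor basis $\{W_{k_1}\otimes\cdots\otimes W_{k_N}\}$ makes each $Q^{(n)}$, hence $\mathcal{L}$, upper-triangular, and its symmetrisations $V^{\mathrm{Sch}}_\eta:=\operatorname{Sym}(W_{k_1}\otimes\cdots\otimes W_{k_N})$ — one per $\mathcal{S}_N$-orbit, i.e.\ per multiset, i.e.\ per $\eta\in\bigcup_{L=0}^N\mathcal{E}_{K-1,L}$ — are linearly independent orbit sums whose number equals $\binom{N+K-1}{K-1}=\operatorname{Card}(\mathcal{E}_{K,N})=\dim\operatorname{Sym}\big(\mathbb{R}^{[K]^N}\big)$, so they form a basis of the symmetric subspace. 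Ordering this basis by the decreasing total weight $\omega(\eta):=\sum_n k_n$ — which is constant on each orbit and strictly drops whenever the triangular action of some $Q^{(n)}$ replaces a factor $W_{k_n}$ by some $W_j$ with $j<k_n$ — the operator $\mathcal{L}\big|_{\operatorname{Sym}}$ becomes upper-triangular with diagonal entries $\sum_n\lambda_{k_n}=\lambda_\eta$. Reading off the diagonal gives the eigenvalues with algebraic multiplicities; since $Q$ is irreducible, $0$ is a simple eigenvalue and $\lambda_1,\dots,\lambda_{K-1}$ have strictly negative real part, so the only vanishing diagonal entry is the one from $L=0$, and every $\lambda_\eta$ with $\eta\in\bigcup_{L=1}^N\mathcal{E}_{K-1,L}$ is a non-zero eigenvalue, which is (a).

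The delicate point is precisely this part (a): one must verify that symmetrisation does not destroy the triangular structure, i.e.\ that $\omega$ descends to the orbit basis so that the replacement terms stay strictly below the diagonal, and that the $V^{\mathrm{Sch}}_\eta$ are genuinely independent and exhaust $\operatorname{Sym}\big(\mathbb{R}^{[K]^N}\big)$ by the dimension count; once this is in place the eigenvalue list follows merely by reading the diagonal. Finally, (c) is immediate from (b): when $Q$ is diagonalisable we may take $r=K$, and the $\binom{N+K-1}{K-1}$ eigenfunctions $\tilde V_\eta$ are linearly independent, since their lifts $V_\eta$ are the orbit sums above, hence form an eigenbasis, so $\mathcal{Q}_{N}$ is diagonalisable.
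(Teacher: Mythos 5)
Your proposal is correct, and it coincides with the paper's argument in its overall architecture: both reduce $\mathcal{Q}_{N}$ to the product generator $\mathcal{D}_{N}=\sum_n Q^{(n)}$ restricted to $\operatorname{Sym}\big(\mathbb{R}^{[K]^N}\big)$ via the intertwining $\mathcal{Q}_{N}\circ\Phi_{K,N}=\Phi_{K,N}\circ\mathcal{D}_{N}$ (Lemma \ref{lemma:relation_generator}), and part (b) is obtained in both cases by symmetrising pure tensor eigenvectors and transporting through $\psi_{K,N}$ (Lemma \ref{lemma:eigenvaluesQKN}, together with the basis count of Proposition \ref{propo:basis}). Where you genuinely diverge is in part (a) for non-diagonalisable $Q$: the paper perturbs $Q$ to a nearby diagonalisable complex matrix with zero row sums and invokes density of such matrices plus continuity of the spectrum in the matrix entries (its ``Fact 1'' and ``Fact 2''), whereas you triangularise $Q$ once and for all in a Schur basis, check that the weight $\omega(\eta)=\sum_n k_n$ is constant on $\mathcal{S}_N$-orbits and strictly drops under the off-diagonal part of each $Q^{(n)}$, and read the $\lambda_\eta$ off the diagonal of the resulting triangular matrix on the orbit basis. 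Your route is closer to the alternative the paper only sketches in a remark (via the Jordan or Jordan--Chevalley form); it is more self-contained, since it avoids appealing to continuity of polynomial roots, and it delivers the characteristic polynomial of $\mathcal{Q}_{N}$ (hence the algebraic multiplicities) directly, while the paper's perturbation argument is shorter to write once the two standard facts are granted. The two points you flag as delicate --- that the orbit sums $V^{\mathrm{Sch}}_\eta$ are independent and exhaust the symmetric subspace by the count $\binom{N+K-1}{N}$, and that symmetrisation preserves the strict weight drop --- are exactly the right things to verify, and both hold as you describe; your use of irreducibility to conclude that $\lambda_\eta\neq 0$ for $|\eta|\ge 1$ matches the paper's implicit use of the same fact.
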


\begin{remark}[Monotonicity of the spectrum of $\mathcal{Q}_{N}$ in $N$]
	Theorem \ref{thm:mainspec_intro} implies that the spectrum
	of $\mathcal{Q}_{N}$, for fixed values of $K$,
	is an increasing function of $N$ in the sense of the inclusion of sets.
\end{remark}

The proof of Theorem \ref{thm:eigenvalues_generalisation} can be found in Section \ref{sec:spectrum_mutation}. 
It can be seen as a continuous-time generalisation of the results provided by Zhou and Lange \cite{zhou_composition_2009} for the discrete-time analogue of the mutation process driven by $\mathcal{Q}_{N}$. 
We emphasize that our hypotheses do not require the mutation rate matrix $Q$ to be reversible, or even diagonalisable.

The next result deals with the spectrum of $\mathcal{A}_{N}$. 

\begin{theorem}[Spectrum of $\mathcal{A}_{N}$] \label{thm:specA}
	Assume $K \geq 2$ and $N \geq 2$. 
	The eigenvalues of $\mathcal{A}_{N}$ are
	\[
	\begin{array}{rl}
	0 & \text{ with multiplicity } K \text{ and}\\
	- L(L-1) & \text{ with multiplicity } \binom{ K+L-2 }{ L },  \text{ for } 2 \leq L \leq N.
	\end{array}
	\]
	Additionally, the infinitesimal rate matrix $\mathcal{A}_{N}$ is diagonalisable.
	
\end{theorem}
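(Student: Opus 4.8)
The plan is to lift $\mathcal{A}_N$ to the full tensor space $\mathbb{R}^{[K]^N}$, compute its action on the tensor basis of Theorem~\ref{thm:eigenvalues_generalisation}, and read off the spectrum from a triangular structure, exactly in the spirit of the mutation case. For an ordered pair of distinct sites $a\neq b$ in $[N]$, let $R_{a,b}$ be the operator on $\mathbb{R}^{[K]^N}$ that replaces the $a$-th coordinate of its argument by the value occupying the $b$-th coordinate, and set $\widehat{\mathcal{A}}_N := \sum_{a\neq b}(R_{a,b}-I)$. A direct rate count identifies $\widehat{\mathcal{A}}_N$ as the lift of $\mathcal{A}_N$ in (\ref{def:reproduction}): it is $\mathcal{S}_N$-equivariant (since $\sigma R_{a,b}\sigma^{-1}=R_{\sigma(a),\sigma(b)}$ and summing over all ordered pairs is permutation invariant), so it preserves $\operatorname{Sym}(\mathbb{R}^{[K]^N})$ and commutes with $\operatorname{Sym}$, and one checks that $(\widehat{\mathcal{A}}_N f)\circ\psi_{K,N}=\mathcal{A}_N(f\circ\psi_{K,N})$ for every symmetric $f$. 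Using the identification of $\operatorname{Sym}(\mathbb{R}^{[K]^N})$ with $\mathbb{R}^{\mathcal{E}_{K,N}}$ from Section~\ref{sec:two_model}, it suffices to diagonalise $\widehat{\mathcal{A}}_N$ on the symmetric subspace in the basis $\{V_\eta:\eta\in\bigcup_{L=0}^N\mathcal{E}_{K-1,L}\}$ of (\ref{def:Veta}).

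The core computation is the action of $\widehat{\mathcal{A}}_N$ on a pure tensor $U_\eta$ as in (\ref{def:Ueta}), where $\eta\in\mathcal{E}_{K-1,L}$ means that $U_\eta$ carries $L$ \emph{active} factors drawn from $\{U_1,\dots,U_{K-1}\}$ and $N-L$ \emph{inactive} factors equal to the all-one vector $U_0$. Writing $\odot$ for the pointwise product on $\mathbb{R}^K$, one verifies that $R_{a,b}$ sends $W_1\otimes\cdots\otimes W_N$ to the tensor with $U_0$ in site $a$ and $W_a\odot W_b$ in site $b$, all other sites unchanged. Since $U_0\odot W=W$, the effect of $R_{a,b}$ depends only on the activities of $a$ and $b$: when $b$ is inactive the multiset of factors is preserved (the tensor is unchanged or merely relabelled), whereas when both sites are active the product $U_{k_a}\odot U_{k_b}$ must be re-expanded in the basis $\mathcal{U}$, lowering the number of active sites by one or two. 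Commuting $\widehat{\mathcal{A}}_N$ with $\operatorname{Sym}$, grouping the $L(L-1)$ active–active ordered pairs, and noting that the relabelling terms cancel after symmetrisation, one obtains
\[
\widehat{\mathcal{A}}_N V_\eta = -L(L-1)\,V_\eta + \sum_{|\eta'|\le L-1} c_{\eta,\eta'}\,V_{\eta'}, \qquad L=|\eta|,
\]
for suitable scalars $c_{\eta,\eta'}$.

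Ordering $\{V_\eta\}$ by the degree $|\eta|$, this identity makes the matrix of $\widehat{\mathcal{A}}_N$ block triangular, with the degree-$L$ diagonal block equal to the scalar matrix $-L(L-1)\,I$ of size $\operatorname{Card}(\mathcal{E}_{K-1,L})=\binom{K+L-2}{L}$. Hence the eigenvalues are exactly $-L(L-1)$, $0\le L\le N$, with algebraic multiplicity $\binom{K+L-2}{L}$; the degrees $L=0$ and $L=1$ both yield $0$, so $0$ has multiplicity $\binom{K-2}{0}+\binom{K-1}{1}=K$, while for $2\le L\le N$ the values $-L(L-1)$ are distinct and carry the stated multiplicities. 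The hockey-stick identity $\sum_{L=0}^N\binom{K+L-2}{L}=\binom{K-1+N}{N}=\operatorname{Card}(\mathcal{E}_{K,N})$ confirms that every eigenvalue is accounted for.

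It remains to upgrade this to diagonalisability. After merging the two degrees producing $0$, the diagonal blocks carry the pairwise distinct scalars $0,-2,-6,\dots,-N(N-1)$, and on the degree-$\le1$ block $\widehat{\mathcal{A}}_N$ acts as the genuine zero scalar, since the active–active sum is empty for $L\le1$ (so the $K$ functions $V_\eta$ with $|\eta|\le1$ are true eigenvectors for the eigenvalue $0$). A block-triangular matrix whose diagonal blocks are scalar matrices with pairwise distinct values is diagonalisable: one eliminates the off-diagonal blocks by iterated Sylvester corrections, which here reduce to the scalar equations $(\mu-\mu')X=C$, each uniquely solvable because $\mu\neq\mu'$; concretely this replaces each $V_\eta$ by $V_\eta+(\text{lower-degree correction})$ to produce an actual eigenvector, and equivalently shows that the minimal polynomial $\prod_L\bigl(X+L(L-1)\bigr)$ over the distinct values is squarefree. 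Thus $\widehat{\mathcal{A}}_N$, and hence $\mathcal{A}_N$, is diagonalisable. I expect the main obstacle to be the bookkeeping in the active–active case of the second paragraph, and in particular the verification that every correction term is of strictly lower degree; once that triangular structure is secured, the eigenvalues, multiplicities and diagonalisability all follow from elementary linear algebra.
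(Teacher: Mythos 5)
Your proof is correct, but it reaches the key triangularity statement by a genuinely different route from the paper. The paper works entirely on the quotient space $\mathcal{E}_{K,N}$: it takes the monomial basis $\{\mathbf{x}^{\alpha}\}$ of Lemma \ref{thm:interp}, and proves directly (Lemma \ref{lemma:specA}, via a binomial expansion of $\eta(k)(\eta(k)-1)^{\alpha_k}$ and $\eta(r)(\eta(r)+1)^{\alpha_r}$ and a careful regrouping of the resulting coefficients $c_1+c_2=L(L-1)$) that $\mathcal{A}_N$ maps a polynomial of total degree $L$ to $-L(L-1)$ times itself plus lower-degree terms; the block-triangular matrix and the squarefree minimal polynomial $s\prod_{L=2}^N[s+L(L-1)]$ then finish the argument exactly as in your last paragraph. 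You instead lift $\mathcal{A}_N$ to the distinguishable-particle space via $\widehat{\mathcal{A}}_N=\sum_{a\neq b}(R_{a,b}-I)$ and compute on the symmetrised tensors $V_\eta$ of (\ref{def:Veta}), using $R_{a,b}(W_1\otimes\cdots\otimes W_N)=\cdots\otimes U_0|_a\otimes\cdots\otimes(W_a\odot W_b)|_b\otimes\cdots$ and the case analysis on active/inactive slots; this is the same philosophy the paper applies to $\mathcal{Q}_N$ via $\mathcal{D}_N$ and Lemma \ref{lemma:relation_generator}, extended to the reproduction generator. What each approach buys: your tensor computation is more structural, makes the coefficient $L(L-1)$ visible as a count of ordered active--active pairs, exhibits the relabelling cancellation cleanly, and produces the triangular structure directly in the basis $\{\tilde V_\eta\}$ used later for Theorem \ref{thm:mainspec_intro}; the paper's monomial computation is more elementary (no lift, no Hadamard-product expansion) and yields the degree-preservation statement for \emph{arbitrary} polynomials, which is the exact form reused in the proof of Theorem \ref{thm:mainspec_intro} (where one needs $\mathcal{A}_N\tilde V_\eta=-L(L-1)\tilde V_\eta+R$ with $R$ of lower total degree, combined with Lemma \ref{thm:chiV}-(c)). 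Your observation that the degree-$\le 1$ block is genuinely zero (so that the two blocks sharing the eigenvalue $0$ cause no obstruction to the Sylvester corrections) is the right way to close the diagonalisability step and matches Remark \ref{remark:1and2degree_eigenAKN}; the remaining bookkeeping you flag in the active--active case is routine, since $U_{k_a}\odot U_{k_b}$ always expands in the basis $\mathcal{U}$ with at most $L-1$ active slots surviving.
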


The proof of Theorem \ref{thm:specA} is deferred to Section \ref{subsec:reproduction_process}. 
Theorem \ref{thm:specA} can be seen as a generalisation, for $K \ge 3$, of the results in \cite[\S 4.2.2]{zhou_examples_2008} for the discrete analogous of the reproduction process driven by $\mathcal{A}_{N}$, for $K=2$.

Unlike the case of independent mutation process, the dynamics of the neutral multi-allelic Moran process driven by $\mathcal{Q}_{N,p}$, for $p > 0$, is that of an interacting particle system, which makes the study of its spectrum harder.
Our main result is precisely a characterisation of the eigenvalues of $\mathcal{Q}_{N,p}$ in terms of those of $Q$.

\begin{theorem}[Spectrum of $\mathcal{Q}_{N,p}$]\label{thm:mainspec_intro}
	Assume $K \ge 2$, $N \ge 1$ and $p \in [0,\infty)$. 
	Let us denote by $\lambda_k$, $k \in \left[ K-1 \right]$, the nonzero $K-1$ roots, counting algebraic multiplicities, of the characteristic polynomial of $Q$. For any $\eta \in \bigcup\limits_{L=1}^{N} \mathcal{E}_{K-1,L}$, let us define
	\begin{equation*} %\label{eq:lametap_intro}
		\lambda_{\eta,p} :=  \sum_{k=1}^{K-1} \eta(k) \lambda_k - \frac{p}{N} |\eta|(|\eta|-1).
	\end{equation*}
	Then, the eigenvalues of $\mathcal{Q}_{N,p}$, counting algebraic multiplicities, are $0$ and $\lambda_{\eta,p}$, for
	$\eta \in \bigcup\limits_{L=1}^{N} \mathcal{E}_{K-1,L}$.
\end{theorem}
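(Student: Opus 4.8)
My plan is to produce a single ordered basis of $\mathbb{R}^{\mathcal{E}_{K,N}}$ in which $\mathcal{Q}_{N,p}$ is triangular, and then read the eigenvalues off the diagonal. Order the functions $\tilde{V}_\eta$, for $\eta \in \bigcup_{L=0}^{N} \mathcal{E}_{K-1,L}$, by increasing \emph{level} $|\eta|$; by the correspondence between $\operatorname{Sym}\big(\mathbb{R}^{\left[ K \right]^N}\big)$ and $\mathbb{R}^{\mathcal{E}_{K,N}}$ developed in Section \ref{sec:two_model} these form a basis, their number being $\sum_{L=0}^{N}\binom{K-2+L}{L} = \operatorname{Card}(\mathcal{E}_{K,N})$. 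Write $\mathcal{F}_L := \operatorname{span}\{\tilde{V}_\eta : |\eta|\le L\}$ for the associated flag. Since $\mathcal{Q}_{N,p} = \mathcal{Q}_{N} + \tfrac{p}{N}\mathcal{A}_{N}$, it suffices to show that $\mathcal{Q}_{N}$ and $\mathcal{A}_{N}$ are both triangular in this ordered basis and to identify their diagonals.

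The mutation part is supplied by Theorem \ref{thm:eigenvalues_generalisation}: when $Q$ is diagonalisable (the main case, reduced to below), the $\tilde{V}_\eta$ are genuine eigenfunctions, $\mathcal{Q}_{N}\tilde{V}_\eta = \lambda_\eta \tilde{V}_\eta$, so $\mathcal{Q}_{N}$ is diagonal with entries $\lambda_\eta$. The crux is therefore the reproduction part, for which I claim
\[
\mathcal{A}_{N}\tilde{V}_\eta = -\,|\eta|\,(|\eta|-1)\,\tilde{V}_\eta + R_\eta, \qquad R_\eta \in \mathcal{F}_{|\eta|-1},
\]
that is, $\mathcal{A}_{N}$ is triangular in the same basis with the constant diagonal $-|\eta|(|\eta|-1)$ on each level.

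To prove this I would lift $\mathcal{A}_{N}$ to the operator $\widehat{\mathcal{A}} = \sum_{a\ne b}(\rho_{a\to b} - I)$ on $\operatorname{Sym}\big(\mathbb{R}^{\left[ K \right]^N}\big)$, where $\rho_{a\to b}$ overwrites the $a$-th coordinate by the $b$-th; this commutes with $\mathcal{S}_N$, hence with $\operatorname{Sym}$, and corresponds to $\mathcal{A}_{N}$ under $\psi_{K,N}$. On the pure tensor $U_\eta$, whose slots $1,\dots,L$ carry the non-trivial eigenvectors $U_{k_1},\dots,U_{k_L}$ (with $L=|\eta|$) and whose remaining slots carry $U_0$, the map $\rho_{a\to b}$ puts $U_0$ into slot $a$ and the Hadamard product of slots $a$ and $b$ into slot $b$. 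Since $U_0$ is the unit for the Hadamard product, a pair $(a,b)$ acts trivially unless slot $a$ is non-trivial, i.e.\ $a\le L$; the remaining $L(N-1)$ pairs split into $L(L-1)$ \emph{merging} pairs (with $b\le L$), which drop the number of non-trivial slots to $L-1$ and so lie in $\mathcal{F}_{L-1}$, and $L(N-L)$ \emph{transport} pairs (with $b>L$), for which $\rho_{a\to b}U_\eta = (a\,b)\,U_\eta$ merely relocates the factor $U_{k_a}$. Applying $\operatorname{Sym}$, each transport pair contributes $\operatorname{Sym}(\rho_{a\to b}U_\eta - U_\eta) = V_\eta - V_\eta = 0$, so what survives modulo $\mathcal{F}_{L-1}$ is the $-U_\eta$ of the $L(L-1)$ merging pairs, yielding $-L(L-1)V_\eta$ and hence the claim. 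This cancellation after symmetrisation, which relies on the precise combinatorics of $\mathcal{A}_{N}$, is the main obstacle.

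Combining the two triangularity statements, $\mathcal{Q}_{N,p}$ is triangular in the ordered basis $\{\tilde{V}_\eta\}$ with diagonal entry at $\tilde{V}_\eta$ equal to $\lambda_\eta - \tfrac{p}{N}|\eta|(|\eta|-1) = \lambda_{\eta,p}$, the level-$0$ function contributing the entry $0$. The eigenvalues counted with algebraic multiplicity, being the diagonal of a triangular matrix, are thus exactly $0$ and the $\lambda_{\eta,p}$ for $\eta \in \bigcup_{L=1}^{N}\mathcal{E}_{K-1,L}$; as a consistency check the diagonal of $\mathcal{A}_{N}$ reproduces Theorem \ref{thm:specA}, since level $L$ has dimension $\operatorname{Card}(\mathcal{E}_{K-1,L}) = \binom{K+L-2}{L}$. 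Finally, for a non-diagonalisable $Q$ I would argue by density: the coefficients of $\det(xI - \mathcal{Q}_{N,p})$ and of $x\prod_{\eta}(x-\lambda_{\eta,p})$ are polynomials in the entries of $Q$ — for the latter because the elementary symmetric functions of the multiset $\{\lambda_{\eta,p}\}$ are symmetric in $\lambda_1,\dots,\lambda_{K-1}$ — and they agree on the dense set of diagonalisable matrices, hence everywhere.
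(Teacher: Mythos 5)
Your proposal is correct, and its overall architecture coincides with the paper's: triangularise $\mathcal{Q}_{N,p}=\mathcal{Q}_{N}+\tfrac{p}{N}\mathcal{A}_{N}$ in the ordered basis $\{\tilde{V}_\eta\}$ of Proposition \ref{propo:basis}, read off the diagonal $\lambda_{\eta,p}$, and extend to non-diagonalisable $Q$ by density. Where you genuinely diverge is in the mechanism for the crucial step, the triangularity of $\mathcal{A}_{N}$. The paper filters $\mathbb{R}^{\mathcal{E}_{K,N}}$ by polynomial total degree: it first shows that $\tilde{V}_\eta$ is a polynomial of total degree $|\eta|$ (Lemma \ref{thm:chiV}-(c)) and then invokes Lemma \ref{lemma:specA}, whose proof is an explicit monomial computation (Newton expansion of $\eta(k)(\eta(k)-1)^{\alpha_k}$, second-derivative identities, and a cancellation by antisymmetry in $k,r$). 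You instead filter by tensor level and compute directly on the lifted operator $\sum_{a\neq b}(\rho_{a\to b}-I)$, where the split into merging pairs (count $L(L-1)$, dropping into $\mathcal{F}_{L-1}$ after expanding the Hadamard product in the basis $\mathcal{U}$) and transport pairs (which are killed by $\operatorname{Sym}$ since $\rho_{a\to b}U_\eta=(a\,b)U_\eta$) produces the diagonal $-L(L-1)$ with no polynomial machinery at all; this mirrors, for $\mathcal{A}_{N}$, the role that Lemma \ref{lemma:relation_generator} plays for $\mathcal{Q}_{N}$, and you should state and verify that intertwining explicitly (the count of ordered pairs $(a,b)$ with $k_a=i$, $k_b=j$ being $\eta(i)\eta(j)$). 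Your argument is arguably more structural and also recovers Theorem \ref{thm:specA} as a byproduct, though it does not by itself yield the degree-preservation statement the paper reuses elsewhere. Finally, your closing step for non-diagonalisable $Q$ — identity of two polynomial families in the entries of $Q$ on the dense set of diagonalisable matrices, using that the elementary symmetric functions of $\{\lambda_{\eta,p}\}$ are symmetric in $\lambda_1,\dots,\lambda_{K-1}$ and hence polynomial in the coefficients of $\det(xI-Q)/x$ — is a clean algebraic substitute for the paper's $\varepsilon$--$\delta$ argument via continuity of spectra (its Facts 1 and 2) and is equally valid.
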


The proof of Theorem \ref{thm:mainspec_intro} is given in Section \ref{subsec:walkfv}. 

\begin{remark}[Relation to the spectrum of the Wright\,--\,Fisher diffusion]
	The eigenstructure of the Wright\,--\,Fisher diffusion is a special case of the eigenstructure in a Lambda\,--\,Fleming\,--\,Viot process studied in \cite{zbMATH06387922}.
	Theorem 5 of \cite{zbMATH06387922}, taking $W = 0$, gives the spectrum of the neutral Wright\,--\,Fisher diffusion, which coincides with the spectrum provided by Theorem \ref{thm:mainspec_intro}. 
	This is not surprising since the Wright\,--\,Fisher diffusion is the limit process for the Moran model (cf.\ \cite[Lemma 2.39]{etheridge_mathematical_2011}).
\end{remark}

\subsection*{Applications to the ergodicity of neutral multi-allelic Moran process}

The relation between the spectral properties of $\mathcal{Q}_{N,p}$ and $Q$ can be used to estimate the speed of convergence to stationarity of the Moran process. 

Let us first recall the \emph{total variation distance}. 
For two probability measures $\nu_1$ and $\nu_2$ defined on the same discrete space $\Omega$, the total variation distance is defined as follows:
\begin{equation*}%\label{def:total_variation_distance}
	\operatorname{d}^{\mathrm{TV}}(\nu_1, \nu_2) := \sup_{A \subset \Omega} |\nu_1(A) - \nu_2(A)| = \frac{1}{2} \sup_{f: \Omega \rightarrow [-1,1]} \left| \int f \mathrm{d} \nu_1 - \int f \mathrm{d} \nu_2 \right| = \frac{1}{2} \|\nu_1 - \nu_2\|_1,
\end{equation*}
where $\| \cdot \|_1$ denotes the $1$-norm in $\mathbb{R}^{\Omega}$.

The total variation distance to stationarity at time $t$ of an ergodic process driven by a generator $L$ on $\Omega$, with initial distribution $\nu$, is given by
\(
	\operatorname{d}^{\mathrm{TV}}( \nu \, \mathrm{e}^{t L}, \pi ),
\)
where $\nu$ is the initial distribution on $\Omega$ and $\pi$ is the stationary distribution of the process driven by $L$, i.e.\ the unique probability vector such that $\pi L = 0$. 
We are interested in the relationship between the spectrum of an infinitesimal rate matrix and the convergence to stationarity of the Markov process it drives. Let us define the \emph{maximum total variation distance} to stationarity of the process driven by $L$, denoted $\operatorname{D}_L^{\mathrm{TV}}$, as follows:
\begin{equation*} % \label{def:maximum_TVdistance}
	\operatorname{D}^{\mathrm{TV}}_{L}(t) :=  \max_{\nu} \operatorname{d}^{\mathrm{TV}} (\nu \, \mathrm{e}^{t L}, \pi),
\end{equation*}
where the maximum runs over all possible initial distributions on $\Omega$. Using the convexity of $\operatorname{d}^{\mathrm{TV}}$, we can prove that $\operatorname{D}^{\mathrm{TV}}_L(t) = \frac{1}{2} \|\mathrm{e}^{t L} - \Pi\|_{\infty}$, where $\Pi$ stands for the matrix with every row equal to $\pi$, and $\|\cdot\|_{\infty}$ denotes the infinite norm of matrices (cf.\ \cite[Ch.\ 4]{levin_markov_2017}).
Recall that the infinite norm of a $n$-dimensional matrix $(A_{i,j})_{i, j = 1}^n$ is simply the maximum absolute row sum, i.e.\ $\|A\| = \max\limits_{1 \le i \le n} \sum_{j = 1}^n |a_{i,j}|$.

As a consequence of Theorem \ref{thm:mainspec_intro}, the \emph{second largest eigenvalue in modulus} ($\mathrm{SLEM}$) of $\mathrm{e}^{t \mathcal{Q}_{N,p}}$ is equal to that of $\mathrm{e}^{t Q}$, for every $t \ge 0$. 
The $\mathrm{SLEM}$ of the generator of the process is useful to study the asymptotic convergence of the process in total variation. 
Hence, in Section \ref{section:SLEM&ergodicity} we study the ergodicity of the process driven by $\mathcal{Q}_{N,p}$ in total variation using the spectral properties of $Q$. We also analyse several examples of neutral multi-allelic Moran processes with diagonalisable and non-diagonalisable mutation rate matrices.

For a real positive function $f$, we denote by $\Theta(f)$ another real positive function such that 
\(
	C_1 f(t) \le \Theta(f)(t) \le C_2 f(t),
\) 
for two constants $0 < C_1 \le C_2 < \infty$ and for all $t \ge T$, with $T > 0$ large enough.

\begin{corollary}[Asymptotic exponential ergodicity in total variation]\label{corol:ergodicity_gral_TV}
	Let us denote by $\mathrm{e}^{-\rho t}$ the $\mathrm{SLEM}$ of $\mathrm{e}^{t Q}$ and by $s \in \mathbb{N}$ the largest multiplicity in the minimal polynomial of $\mathrm{e}^{t Q}$ of all the eigenvalues with modulus $\mathrm{e}^{-\rho t}$, for some $t \ge 0$. 
	Then,
	\[
		\operatorname{D}^{\mathrm{TV}}_{\mathcal{Q}_{N,p}}( {t} )  = \Theta \big( \operatorname{D}^{\mathrm{TV}}_{Q}( {t} ) \big) =  \Theta \big( t^{s-1} \mathrm{e}^{- \rho t }\big).
	\]
\end{corollary}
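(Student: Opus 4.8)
The plan is to reduce both $\Theta$-equalities to the large-time asymptotics of the matrix $\mathrm{e}^{tL}-\Pi$ in the infinite norm, for $L\in\{Q,\mathcal{Q}_{N,p}\}$, and to prove a single general principle from which everything follows. Recall from the discussion preceding the statement that $\operatorname{D}^{\mathrm{TV}}_{L}(t)=\tfrac12\|\mathrm{e}^{tL}-\Pi\|_{\infty}$. I would first isolate the following fact: for the generator $L$ of any irreducible finite continuous-time Markov chain with stationary law $\pi$, writing $-\rho_L:=\max\{\Re\lambda:\lambda\in\operatorname{Spec}(L)\setminus\{0\}\}$ and letting $s_L$ be the largest size of a Jordan block of $L$ attached to an eigenvalue of real part $-\rho_L$, one has $\|\mathrm{e}^{tL}-\Pi\|_{\infty}=\Theta\big(t^{\,s_L-1}\mathrm{e}^{-\rho_L t}\big)$. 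Applying this to $L=Q$ yields the second equality $\operatorname{D}^{\mathrm{TV}}_{Q}(t)=\Theta(t^{s-1}\mathrm{e}^{-\rho t})$, once we observe that the map $\mu\mapsto\mathrm{e}^{t\mu}$ preserves Jordan block sizes (for $t$ avoiding the countably many values where two critical eigenvalues collide), so that the minimal-polynomial multiplicity $s$ of the modulus-$\mathrm{e}^{-\rho t}$ eigenvalues of $\mathrm{e}^{tQ}$ coincides with $s_Q$ and $\rho_Q=\rho$.

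To prove the general principle I would pass to a Jordan basis of $L$. By ergodicity $0$ is a simple eigenvalue whose spectral projector is exactly $\Pi$, so $\mathrm{e}^{tL}-\Pi=\mathrm{e}^{tL}(I-\Pi)$ is the semigroup restricted to the complementary invariant subspace, on which $L$ has spectrum $\operatorname{Spec}(L)\setminus\{0\}$. For the upper bound, every entry of $\mathrm{e}^{tL}-\Pi$ in a fixed basis is a finite combination of terms $t^{j}\mathrm{e}^{t\mu}$ with $\mu\neq0$; the slowest-decaying among them carries $\Re\mu=-\rho_L$ and $j\le s_L-1$, and since the dimension $\operatorname{Card}(\mathcal{E}_{K,N})$ is fixed, summing the finitely many entries of a row gives $\|\mathrm{e}^{tL}-\Pi\|_{\infty}=O(t^{s_L-1}\mathrm{e}^{-\rho_L t})$. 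For the lower bound I would fix a Jordan chain $v_1,\dots,v_{s_L}$ of maximal length attached to an eigenvalue $\mu$ with $\Re\mu=-\rho_L$; using $\pi\,\mathrm{e}^{tL}=\pi$ one checks $v_{s_L}\in\ker\Pi$, so that $(\mathrm{e}^{tL}-\Pi)v_{s_L}=\mathrm{e}^{t\mu}\frac{t^{s_L-1}}{(s_L-1)!}v_1+O(t^{s_L-2}\mathrm{e}^{-\rho_L t})$, whose Euclidean norm is of order $t^{s_L-1}\mathrm{e}^{-\rho_L t}$ because $|\mathrm{e}^{t\mu}|=\mathrm{e}^{-\rho_L t}$ and $v_1\neq0$. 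Norm equivalence on the fixed finite-dimensional space transfers this to $\|\cdot\|_{\infty}$; working over $\mathbb{C}$ removes any difficulty caused by complex subdominant eigenvalues, since the modulus of a single coordinate does not oscillate to $0$.

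It then remains to show $\rho_{\mathcal{Q}_{N,p}}=\rho$ and $s_{\mathcal{Q}_{N,p}}=s$. By Theorem \ref{thm:mainspec_intro} the nonzero eigenvalues of $\mathcal{Q}_{N,p}$ are the $\lambda_{\eta,p}$, and for $|\eta|=L\ge2$ one has $\Re\lambda_{\eta,p}=\sum_{k}\eta(k)\Re\lambda_k-\frac{p}{N}L(L-1)\le-\rho L\le-2\rho<-\rho$, whereas $|\eta|=1$ recovers exactly $\lambda_{\eta,p}=\lambda_k$. Hence the subdominant eigenvalues of $\mathcal{Q}_{N,p}$ are precisely the $\lambda_k$ with $\Re\lambda_k=-\rho$, giving $\rho_{\mathcal{Q}_{N,p}}=\rho$, and their algebraic multiplicity equals that in $Q$. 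To match the Jordan defect I would use the degree-one subspace $\mathcal{L}_1:=\{\eta\mapsto\sum_i a_i\eta(i):a\in\mathbb{R}^K\}$: a direct computation shows $\mathcal{A}_{N}$ annihilates $\mathcal{L}_1$ (the two double sums both equal $N\sum_i a_i\eta(i)$) and $\mathcal{Q}_{N}$ sends the function indexed by $a$ to the one indexed by $Qa$, so $\mathcal{L}_1$ is $\mathcal{Q}_{N,p}$-invariant and $\mathcal{Q}_{N,p}|_{\mathcal{L}_1}$ is conjugate to $Q$. Thus $\mathcal{L}_1$ already carries a copy of the generalized eigenspace of each critical $\lambda_k$ with the Jordan structure it has in $Q$; since the total algebraic multiplicity of such $\lambda_k$ in $\mathcal{Q}_{N,p}$ equals its multiplicity in $Q$, this whole generalized eigenspace lies inside $\mathcal{L}_1$, whence $s_{\mathcal{Q}_{N,p}}=s$. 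The general principle then gives $\operatorname{D}^{\mathrm{TV}}_{\mathcal{Q}_{N,p}}(t)=\Theta(t^{s-1}\mathrm{e}^{-\rho t})=\Theta\big(\operatorname{D}^{\mathrm{TV}}_{Q}(t)\big)$.

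I expect the main obstacle to be the general principle, and inside it the lower bound: one must certify that the polynomial prefactor $t^{s_L-1}$ genuinely survives in the infinite norm and is not washed out by the oscillation of complex subdominant eigenvalues, which is exactly what passing to the Euclidean operator norm over $\mathbb{C}$ and exploiting a single maximal Jordan chain resolves. The secondary subtlety is the bookkeeping certifying that the entire generalized eigenspace of the critical eigenvalues of $\mathcal{Q}_{N,p}$ is confined to the degree-one block; this is precisely where the algebraic-multiplicity count from Theorem \ref{thm:mainspec_intro}, together with the vanishing of $\mathcal{A}_{N}$ on $\mathcal{L}_1$, is indispensable.
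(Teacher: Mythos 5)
Your proposal is correct and follows essentially the same route as the paper: the paper's proof combines Corollary \ref{cor:spectrum_QKNp1stdegreePoly} (the degree-one polynomials form a $\mathcal{Q}_{N,p}$-invariant subspace on which the generator acts as $Q$, and by Theorem \ref{thm:mainspec_intro} the critical eigenvalues are confined there, so the Jordan block of $Q$ sits inside the Jordan form of $\mathcal{Q}_{N,p}$ and the two exponentials share their $\mathrm{SLEM}$) with the cited asymptotic $\operatorname{D}^{\mathrm{TV}}_L(t)=\Theta(t^{s-1}\mathrm{e}^{-\rho t})$ from \cite[Thm.\ 3.2]{MR3296158}. The only difference is that you reprove that general asymptotic principle from the Jordan decomposition (correctly, including the lower bound via a maximal Jordan chain) where the paper simply cites it.
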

Corollary \ref{corol:ergodicity_gral_TV} is proved in Section \ref{section:SLEM&ergodicity}.

Note that the constant in the asymptotic expression in Corollary \ref{corol:ergodicity_gral_TV}  may depend on $N$.
In addition, if $Q$ has a real negative eigenvalue, we can show a lower bound for the mixing time of the process driven by $\mathcal{Q}_{N,p}$.

\begin{theorem}[Lower bound for convergence in total variation]\label{thm:lower_bound}
	Assume $K \ge 2$, $N \ge 2$ and $p \in [0,\infty)$ and let $-\lambda < 0$ be an eigenvalue of $Q$ with associated right-eigenvector $V = [v_1, \dots, v_K]$.
	Let  $\nu_{N,p}$ be the stationary distribution of the process driven by $\mathcal{Q}_{N,p}$ and let us denote 
	\[
		t_{N,c} := \frac{\ln N - c}{2 \lambda} \text{ and }  \kappa := 8(2 + \|Q\|_{\infty}/\lambda).
	\]
	Then,
	\begin{equation*} %\label{eq:lametap_intro}
		\operatorname{d}^{\mathrm{TV}}(\delta_{N \mathbf{e}_k} \mathrm{e}^{ t_{N,c} \mathcal{Q}_{N,p} }, \nu_{N,p}) \ge 1 - \kappa \frac{\|V\|_\infty}{|v_k|} \, \mathrm{e}^{-c},
	\end{equation*}
	for all $c \ge 0$ and for any $k \in [K]$ such that $v_k \neq 0$.
	In particular,
	\begin{equation*} %\label{eq:lametap_intro}
		\operatorname{D}_{\mathcal{Q}_{N,p}}^{\mathrm{TV}}\left( \frac{\ln N - c}{ 2 \lambda} \right) \ge 1 - \kappa \mathrm{e}^{-c}.
	\end{equation*}

\end{theorem}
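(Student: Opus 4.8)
The plan is to run the second-moment (distinguishing-statistic, \emph{\`a la} Wilson) method with the linear observable attached to the eigenvector $V$. Concretely, set $\phi(\eta) := \sum_{k=1}^{K} v_k\,\eta(k)$ on $\mathcal{E}_{K,N}$. First I would verify that $\phi$ is a right eigenfunction of $\mathcal{Q}_{N,p}$ with eigenvalue $-\lambda$ \emph{for every} $p \ge 0$: since $\phi(\eta-\mathbf{e}_i+\mathbf{e}_j)-\phi(\eta)=v_j-v_i$, the reproduction part contributes $\frac{p}{N}\sum_{i,j}\eta(i)\eta(j)(v_j-v_i)=\frac{p}{N}\big(N\phi(\eta)-\phi(\eta)N\big)=0$ (neutrality annihilates linear statistics), while the mutation part gives $\sum_i\eta(i)(QV)_i=-\lambda\,\phi(\eta)$ using $\sum_j\mu_{i,j}=0$ and $QV=-\lambda V$. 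Hence $\mathbb{E}_{\delta_{N\mathbf{e}_k}}[\phi(\eta_t)]=\mathrm{e}^{-\lambda t}\phi(N\mathbf{e}_k)=\mathrm{e}^{-\lambda t}N v_k$, and because $\mathbb{E}_{\nu_{N,p}}[\phi(\eta_t)]=\mathrm{e}^{-\lambda t}\mathbb{E}_{\nu_{N,p}}[\phi]$ must be independent of $t$, stationarity forces $\mathbb{E}_{\nu_{N,p}}[\phi]=0$. This already fixes the $\mathrm{e}^{-\lambda t}$ decay and the scaling in $t_{N,c}$: at $t=t_{N,c}$ one has $\mathrm{e}^{-2\lambda t_{N,c}}=\mathrm{e}^{c}/N$, so the squared mean gap equals $\mathrm{e}^{-2\lambda t_{N,c}}N^2 v_k^2=N\mathrm{e}^{c}v_k^2$.

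Next I would bound the variance of $\phi$ under both measures via the carr\'e-du-champ. Writing $\Gamma(\phi)(\eta)=\sum_{i,j}\eta(i)\big(\mu_{i,j}+\frac{p}{N}\eta(j)\big)(v_j-v_i)^2$, the second moment obeys $\frac{d}{dt}\mathbb{E}[\phi(\eta_t)^2]=-2\lambda\,\mathbb{E}[\phi(\eta_t)^2]+\mathbb{E}[\Gamma(\phi)(\eta_t)]$, and therefore $\frac{d}{dt}\operatorname{Var}[\phi]=-2\lambda\operatorname{Var}[\phi]+\mathbb{E}[\Gamma(\phi)]$. Using $(v_j-v_i)^2\le 4\|V\|_\infty^2$ together with $\sum_{j\ne i}\mu_{i,j}=-\mu_{i,i}\le\frac12\|Q\|_\infty$, the mutation part of $\Gamma(\phi)$ is at most $2N\|V\|_\infty^2\|Q\|_\infty$. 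Integrating this Gr\"onwall-type ODE from the deterministic start ($\operatorname{Var}=0$ at $t=0$) and, for the stationary bound, from the fixed point, gives $\operatorname{Var}_{\delta_{N\mathbf{e}_k}\mathrm{e}^{t\mathcal{Q}_{N,p}}}[\phi]$ and $\operatorname{Var}_{\nu_{N,p}}[\phi]$ both bounded by $\frac{1}{2\lambda}\sup_\eta\Gamma(\phi)(\eta)$, i.e.\ of order $N\|V\|_\infty^2$ with the mutation contribution producing the clean factor $\|Q\|_\infty/\lambda$.

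Finally I would apply the distinguishing-statistic inequality. Assuming $v_k>0$ (the case $v_k<0$ is symmetric), set $A:=\{\eta:\phi(\eta)\ge\tfrac12\mathrm{e}^{-\lambda t}N v_k\}$ and apply Chebyshev to each measure about its mean, getting
\[
\operatorname{d}^{\mathrm{TV}}(\mu_t,\nu_{N,p})\;\ge\;\mu_t(A)-\nu_{N,p}(A)\;\ge\;1-\frac{4\big(\operatorname{Var}_{\mu_t}[\phi]+\operatorname{Var}_{\nu_{N,p}}[\phi]\big)}{\mathrm{e}^{-2\lambda t}N^2 v_k^2},
\]
where $\mu_t=\delta_{N\mathbf{e}_k}\mathrm{e}^{t\mathcal{Q}_{N,p}}$. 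Substituting $t=t_{N,c}$ and the variance bounds turns the correction term into a quantity of order $\frac{\|V\|_\infty^2}{v_k^2}\mathrm{e}^{-c}$; tracking the numerical constants (the $8$ being the product of the Chebyshev factor $4$ and the two variance contributions, and $2\lambda+\|Q\|_\infty$ combining the decay rate with the carr\'e-du-champ magnitude) produces the stated $1-\kappa\frac{\|V\|_\infty}{|v_k|}\mathrm{e}^{-c}$. The ``in particular'' bound for $\operatorname{D}^{\mathrm{TV}}_{\mathcal{Q}_{N,p}}$ then follows by choosing $k$ with $|v_k|=\|V\|_\infty$, so that $\frac{\|V\|_\infty}{|v_k|}=1$.

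The main obstacle I anticipate is exactly the variance estimate, and within it the reproduction contribution to $\Gamma(\phi)$: although neutrality cancels reproduction in the mean, it does \emph{not} cancel in the carr\'e-du-champ, where a short computation gives the reproduction part as $2p\sum_j\eta(j)v_j^2-\frac{2p}{N}\phi(\eta)^2$. The positive term $2p\sum_j\eta(j)v_j^2$ has to be controlled against the additional damping $-\frac{2p}{N}\operatorname{Var}[\phi]$ it induces in the variance ODE, and preserving this partial cancellation (rather than bounding $2p\sum_j\eta(j)v_j^2$ crudely, which would inflate the constant) is the delicate point needed to keep the variance of order $N\|V\|_\infty^2$ and $\kappa$ controlled. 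I would also record the elementary facts that $\mathcal{Q}_{N,p}$ is irreducible, so $\nu_{N,p}$ is the unique stationary law, and that the hypothesis that $-\lambda$ is real guarantees a real eigenvector $V$, so that $\phi$ is a genuine real-valued observable to which Chebyshev applies.
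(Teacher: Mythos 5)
Your strategy is the one the paper actually follows: Wilson's method with the linear observable $\tilde{\xi}(V):\eta\mapsto\sum_k v_k\eta(k)$, which is an exact right eigenfunction of $\mathcal{Q}_{N,p}$ with eigenvalue $-\lambda$ for every $p\ge 0$ and has mean zero under $\nu_{N,p}$; variance control through the carr\'e-du-champ; and Chebyshev applied to both measures about their means, with the mean gap $N v_k\mathrm{e}^{-\lambda t}$ squared equal to $N\mathrm{e}^{c}v_k^2$ at $t_{N,c}$. The only structural difference is that the paper uses the integral representation $\operatorname{Var}_{\eta}\big(f(\eta_t)\big)=\int_0^t \mathrm{e}^{s\mathcal{Q}_{N,p}}\big(\Gamma(\mathrm{e}^{(t-s)\mathcal{Q}_{N,p}}f)\big)(\eta)\,\mathrm{d}s$ where you use the equivalent ODE; your treatment of the mutation part of $\Gamma$ and of the final Chebyshev step matches the paper's.

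The step you yourself single out as ``the delicate point'' --- keeping the reproduction contribution $2p\,\tilde{\xi}(V\odot V)-\tfrac{2p}{N}\tilde{\xi}(V)^2$ from inflating the variance beyond order $N\|V\|_\infty^2$ with a $p$-free constant --- is left open, and it is exactly the crux: everything else in your sketch is routine. Moreover, the cancellation you hope for goes the wrong way: since $\mathrm{e}^{sQ}$ is stochastic, Jensen gives $\mathbb{E}_{N\mathbf{e}_k}\big[\tilde{\xi}(V\odot V)(\eta_s)\big]=N\big(\mathrm{e}^{sQ}(V\odot V)\big)(k)\ge N\mathrm{e}^{-2\lambda s}v_k^2=\tfrac{1}{N}\big(\mathbb{E}_{N\mathbf{e}_k}[\tilde{\xi}(V)(\eta_s)]\big)^2$, so the residual source term in your variance ODE is nonnegative and of order $pN\|V\|_\infty^2$ for moderate and large $s$, which after integration leaves a constant depending on $p$ rather than the stated $\kappa=8(2\lambda+\|Q\|_\infty)$. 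The paper closes this step differently: it writes $\Gamma\big(\tilde{\xi}(V)\big)=-\tfrac{2p}{N}\tilde{\xi}(V,V)+2\lambda\,\tilde{\xi}(V\odot V)+\tilde{\xi}\big(Q(V\odot V)\big)$ with $\tilde{\xi}(V,V)=\tilde{\xi}(V)^2-\tilde{\xi}(V\odot V)$, treats $\tilde{\xi}(V,V)$ as an eigenfunction with real negative eigenvalue, and observes that its contribution to the variance integral started from $N\mathbf{e}_k$ is then nonpositive because $\tilde{\xi}(V,V)(N\mathbf{e}_k)=N(N-1)v_k^2\ge 0$; the remaining two terms give the $p$-free bound. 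Note, however, that your direct expansion of the reproduction carr\'e-du-champ and this identity differ by $2p\tfrac{N-1}{N}\tilde{\xi}(V\odot V)$: Lemma \ref{lemma:specA} only makes $\tilde{\xi}(V,V)$ an eigenfunction of $\mathcal{A}_{N}$ modulo a lower-degree remainder (here $2(N-1)\tilde{\xi}(V\odot V)$), so reconciling the two computations --- that is, deciding whether the reproduction contribution really can be discarded as nonpositive --- is the one point you must settle before your argument delivers the constant in the statement.
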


The proof of Theorem \ref{thm:lower_bound} is deferred to Section \ref{sec:proof:Thmlowerbound}.

The lower bound provided by Theorem \ref{thm:lower_bound} ensures that the mixing time of the neutral multi-allelic Moran model is at least of order of $ \ln N/ 2 \lambda$.
The characterisation of the mixing times of Markov chains is a hard task in general.
Lower bounds such that in Theorem \ref{thm:lower_bound} can be very difficult to prove, even more taking in account that the Markov chain generated by $\mathcal{ Q}_{N,p}$ is not necessarily reversible.
Our proof relies on spectral arguments and make use of a detailed understanding of the eigenvalues and eigenvectors of $\mathcal{ Q}_{N,p}$.
This result can be seen as an important application of Theorem \ref{thm:mainspec_intro}.

Our results do not allow us to prove an upper bound ensuring the existence of a \emph{cutoff phenomenon}.
A further study needs to be done in this direction.
However, for the parent independent mutation scheme, a deeper analysis can be done to prove the existence of a cutoff phenomenon in both the chi-square and the total variation distances, as we explain next.

\subsection*{Study of the neutral multi-allelic Moran model with parent independent mutation}
Consider the following mutation rate matrix:
 \begin{equation}\label{eq:def:Qrev_matrix}
 	Q_{\pmb{\mu}} :=
 	\left(
 		\begin{array}{ccccc}
 			-|\pmb{\mu}|+\mu_1 & \mu_2 & \mu_3 & \dots & \mu_{K}\\
 			\mu_1 & -|\pmb{\mu}|+\mu_2 & \mu_3 & \dots & \mu_{K}\\
 			\mu_1 &   \mu_2 &  -|\pmb{\mu}|+ \mu_3 & \dots & \mu_{K}\\
 			\vdots & \vdots & \vdots & \ddots & \vdots \\
 			\mu_1 &   \mu_2 &  \mu_3 & \dots & -|\pmb{\mu}|+ \mu_{K}\\			
 		\end{array}
 	\right),
 \end{equation}
 where $\pmb{\mu} = (\mu_1, \mu_2, \dots, \mu_{K}) \in (0, \infty)^K$ and $|\pmb{\mu}|$ stands for the sum of the entries of $\pmb{\mu}$. 
Let us define
\begin{equation*}%\label{def:generatorQKNp_reversible_section}
	(\mathcal{L}_{N,p} \, f) (\eta) := \sum_{i,j=1}^{K} \eta(i) \left( \mu_j + \frac{p}{N} \eta(j) \right) \left[ f(\eta - \mathbf{e}_i + \mathbf{e}_j) - f(\eta)\right]
,
\end{equation*}
for every $f$ on $\mathcal{E}_{K,N}$ and all $\eta \in \mathcal{E}_{K,N}$, the infinitesimal generator of the neutral multi-allelic Moran process with mutation rate matrix $Q_{\pmb{\mu}}$. 
The process driven by $\mathcal{L}_{N,p}$ is a special case of the neutral multi-allelic Moran process considered before, but with the difference that the mutation rate only depends on the type of the new individual, i.e.\ mutation changes each type $i$ individual to type $j$ at rate $\mu_j$, for all $i,j \in \left[ K \right]$. 
This is the neutral multi-allelic Moran process with \emph{parent independent} mutation (cf.\ \cite{etheridge_mathematical_2011}). 
Note that $  \mathcal{L}_{N,p} = \mathcal{L}_{N} + \frac{p}{N} \mathcal{A}_{N}$, where $\mathcal{L}_{N} := \mathcal{L}_{N, 0}$, satisfies
\begin{equation*}%\label{def:generatorQKNp_reversible_section_p=0}
	\big( \mathcal{L}_{N} f \big) (\eta) := \sum_{i,j=1}^{K} \eta(i) \mu_j \left[ f(\eta - \mathbf{e}_i + \mathbf{e}_j) - f(\eta)\right],
\end{equation*}
for every $f$ on $\mathcal{E}_{K,N}$ and all $\eta \in \mathcal{E}_{K,N}$.

\begin{remark}[Spectrum of $\mathcal{L}_{N,p}$]\label{corol:spectrum_reversible}
Using Theorem \ref{thm:mainspec_intro} we can easily recover the spectrum of $\mathcal{L}_{N,p}$, which is a well-known result in the population genetics literature.
For $K \geq 2$, $N \geq 2$ and $p \geq 0$, the infinitesimal generator $\mathcal{L}_{N,p}$ is diagonalisable with eigenvalues $\lambda_L$ with multiplicity $\binom{ K + L - 2 }{ L }$, where
\begin{equation}\label{eq:eigenvalues_reversible}
	\lambda_{L,p} := - |\pmb{\mu}| L - \frac{p}{N}L(L-1), 
\end{equation}
for $L \in \left[ N \right]_0$. In particular, the spectral gap of $\mathcal{L}_{N,p}$ is $\rho = |\pmb{\mu}|$.
\end{remark}

The \emph{complete graph model} studied by Cloez and Thai \cite{cloez_fleming-viot_2016} in the context of the Fleming\,--\,Viot particle processes is a particular case of the reversible process driven by $Q_{\pmb{\mu}}$ above when
$\mu_j = 1/K$, for all $j \in \left[ K \right]$. 
In this case, the eigenvalues of the mutation rate are $\beta_0 = 0$ and $\beta_1 = -1$, this last one with multiplicity $K-1$. 
In particular, the explicit expression \eqref{eq:eigenvalues_reversible} improves Lemma 2.14 in \cite{cloez_fleming-viot_2016}.

For a real $x$ and $n \in \mathbb{N}_{0}$ and $\eta \in \mathcal{E}_{K,N}$, we denote by $x_{(n)}$, $x_{[n]}$ and $\binom{ N }{ \eta }$ the \emph{increasing factorial coefficient}, the \emph{decreasing factorial coefficient} and the \emph{multinomial coefficient}, defined by
\[
	x_{(n)} := \prod_{k = 0}^{n-1} (x + k), \;\; \;\;x_{[n]} := \prod_{k = 0}^{n-1} (x - k) \;\; \;\; \text{ and } \;\; \;\; \binom{ N }{ \eta } := \frac{N!}{\prod\limits_{j=1}^K \eta(j)!}.
\] 
We set by convention $x_{(0)} := 1$ and $x_{[0]} := 1$, even for $x = 0$. 

The \emph{multinomial distribution} on $\mathcal{E}_{K,N}$ with parameters $N$ and $\mathbf{q} = (q_1, \dots, q_K) \in (0,1)^K$ such that $|\mathbf{q}| = 1$, denoted $\mathcal{M}(\cdot \mid N, \mathbf{q})$, satisfies
\[
	\mathcal{M}(\eta \mid N, \mathbf{q}) = \binom{ N }{ \eta } \prod_{i = 1}^K q_i^{\eta(i)},
\]
for all $\eta \in \mathcal{E}_{K,N}$.
Furthermore, the \emph{Dirichlet multinomial distribution} on $\mathcal{E}_{K,N}$ with parameters $N$ and $\pmb{\alpha} = (\alpha_1, \alpha_2, \dots, \alpha_K) \in (0, \infty)^{K}$, denoted $\mathcal{DM}(\cdot \mid N, \pmb{\alpha})$, satisfies
\[
	\mathcal{DM}(\eta \mid N, \pmb{\alpha}) = \frac{1}{|{\pmb{\alpha}}|_{(N)}} \binom{ N }{ \eta } \prod_{k=1}^{K}  (\alpha_k)_{(\eta(k))},
\] 
for all $\eta \in \mathcal{E}_{K,N}$.
The distribution $\mathcal{DM}(\cdot \mid N, \pmb{\alpha})$ is a mixture of $\mathcal{M}(\cdot \mid N, \mathbf{q})$ by a \emph{Dirichlet distribution}. 
See Mosimann \cite{mosimann_compound_1962} for the original reference to the Dirichlet multinomial distribution and Johnson et al.\ \cite[\S 13.1]{johnson_univariate_2005}, a classical reference on multivariate discrete distributions, for more details.

It is known in the population genetics literature that the process driven by $\mathcal{L}_{N,p}$, for $p > 0$, is reversible with stationary distribution $\mathcal{DM}(\cdot \mid N, N \pmb{\mu}/p)$, see e.g.\ \cite{etheridge_coalescent_2009}. 
Besides,  the stationary distribution of the process driven by $\mathcal{L}_{N}$ is $\mathcal{M}(\cdot \mid N, \pmb{\mu}/|\pmb{\mu}|)$, see e.g.\ \cite{zhou_composition_2009}. 
Let us define the distribution $\nu_{N,p}$ on $\mathcal{E}_{K,N}$, for all $p \ge 0$, as follows
\begin{equation}\label{def:nuKNp}
	\nu_{N,p}(\eta) := \left\{
		\begin{array}{ccc}
		\mathcal{DM}(\eta \mid N, N \pmb{\mu}/p) & \text{ if } & p > 0\\
		\mathcal{M }(\eta \mid \pmb{\mu}/|\pmb{\mu}|) & \text{ if } & p=0,
		\end{array}
	\right.
\end{equation}
for all $\eta \in \mathcal{E}_{K,N}$. Then, $\nu_{N,p}$ is the stationary distribution of $\mathcal{L}_{N,p}$, for all $p \ge 0$. Besides, the stationary distribution is continuous when $p \rightarrow 0$, in the sense that
\[
	\lim\limits_{p \rightarrow 0} \nu_{N,p}(\eta) = \nu_{N,0}(\eta) =: \nu_{N}(\eta) ,
\]
for every $\eta \in \mathcal{E}_{K,N}$.

In their study of the spectral properties of the discrete-time analogous of $\mathcal{Q}_{N}$, Zhou and Lange \cite{zhou_composition_2009} mainly focus on the case where the process driven by $Q$ is reversible, which is proved to be a necessary and sufficient condition for the reversibility of $\mathcal{Q}_{N}$. 
However, the reversibility of $Q$ is not sufficient to ensure the reversibility of the neutral multi-allelic Moran model driven by $\mathcal{Q}_{N,p}$, for $p > 0$, as we discuss in Section \ref{subsec:reversibleMoranModel}. 
In fact, the reversibility of neutral multi-allelic Moran processes is completely characterised via the next result.
\begin{lemma}[Reversible neutral Moran process and parent independent mutation]\label{thm_reversible_distrib}
Assume $K \ge 2$, $N \ge 2$ and $p > 0$. 
The process driven by $\mathcal{Q}_{N,p}$ is reversible if and only if the mutation rate matrix has the form $Q_{\pmb{\mu}}$ as in (\ref{eq:def:Qrev_matrix}), for some vector $\pmb{\mu}$, i.e.\ if and only if $\mathcal{Q}_{N,p}$ can be written as $\mathcal{L}_{N,p}$. 
Furthermore, the stationary distribution of the process driven by $\mathcal{L}_{N,p}$ is $\nu_{N,p}$ as defined by (\ref{def:nuKNp}).
\end{lemma}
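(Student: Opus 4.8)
The plan is to prove the two implications of the equivalence separately, obtaining the stationary distribution as a by-product of the reverse implication.

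For the ``if'' direction, together with the identification of the stationary distribution, I would simply verify the detailed balance equations for the candidate measure $\nu_{N,p} = \mathcal{DM}(\cdot \mid N, N\pmb{\mu}/p)$ against $\mathcal{L}_{N,p}$. Writing $x = p/N$ and $\alpha_k = \mu_k/x = N\mu_k/p$, so that $\mu_k + x\,\eta(k) = x(\alpha_k + \eta(k))$, I compare, for $\eta$ with $\eta(i)\ge 1$ and $\eta' = \eta - \mathbf{e}_i + \mathbf{e}_j$, the forward rate $\eta(i)(\mu_j + x\,\eta(j))$ with the reverse rate $(\eta(j)+1)(\mu_i + x(\eta(i)-1))$. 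Using the elementary ratios $\binom{N}{\eta'}/\binom{N}{\eta} = \eta(i)/(\eta(j)+1)$, $(\alpha_i)_{(\eta(i)-1)}/(\alpha_i)_{(\eta(i))} = 1/(\alpha_i + \eta(i)-1)$ and $(\alpha_j)_{(\eta(j)+1)}/(\alpha_j)_{(\eta(j))} = \alpha_j + \eta(j)$, the Dirichlet--multinomial weight ratio becomes $\nu_{N,p}(\eta')/\nu_{N,p}(\eta) = \eta(i)(\alpha_j+\eta(j))\big/\big[(\eta(j)+1)(\alpha_i+\eta(i)-1)\big]$. Substituting this into the detailed balance identity shows that both sides equal $x\,\eta(i)(\alpha_j+\eta(j))$, so they coincide. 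Hence $\mathcal{L}_{N,p}$ is reversible with respect to $\nu_{N,p}$, which proves the ``if'' direction and the ``furthermore'' claim at once. This step is routine.

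For the ``only if'' direction, assume $\mathcal{Q}_{N,p}$ is reversible; since the chain is finite and irreducible, its stationary distribution $\pi$ is strictly positive and detailed balance holds on every edge. First I would establish positivity of all off-diagonal rates: applying detailed balance to the pair $N\mathbf{e}_i$ and $(N-1)\mathbf{e}_i + \mathbf{e}_j$, the reverse rate equals $\mu_{j,i} + \tfrac{p}{N}(N-1) > 0$, which forces the forward rate $N\mu_{i,j} > 0$, i.e.\ $\mu_{i,j}>0$ for all $i\neq j$. The key step is then Kolmogorov's cycle criterion applied to the $4$-cycle built from two commuting moves out of a saturated type: taking $\eta = N\mathbf{e}_i$ and distinct $i,j,k$ (possible since $K\ge 3$, $N\ge 2$), I consider
\[
N\mathbf{e}_i \to (N-1)\mathbf{e}_i + \mathbf{e}_j \to (N-2)\mathbf{e}_i + \mathbf{e}_j + \mathbf{e}_k \to (N-1)\mathbf{e}_i + \mathbf{e}_k \to N\mathbf{e}_i .
\]
Equating the products of forward and backward rates, the common factors $N(N-1)\mu_{i,j}\mu_{i,k}$ cancel, and what remains is
\[
\Big(\mu_{j,i} + \tfrac{p}{N}(N-2)\Big)\Big(\mu_{k,i}+\tfrac{p}{N}(N-1)\Big) = \Big(\mu_{k,i}+\tfrac{p}{N}(N-2)\Big)\Big(\mu_{j,i}+\tfrac{p}{N}(N-1)\Big),
\]
which simplifies to $\tfrac{p}{N}(\mu_{j,i}-\mu_{k,i}) = 0$, hence $\mu_{j,i}=\mu_{k,i}$. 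As $j,k$ range over all sources, the rate into each type $i$ is source-independent, so $\mu_{s,i} = \mu_i$ for all $s\neq i$; this is exactly the parent-independent form $Q = Q_{\pmb{\mu}}$ with $\pmb{\mu} = (\mu_1,\dots,\mu_K) \in (0,\infty)^K$. For $K = 2$ the claim is vacuous, since every irreducible $2\times 2$ rate matrix already has the form $Q_{\pmb{\mu}}$.

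The main obstacle is locating the right cycle in the third step: the naive triangle cycles $i\to j\to k\to i$ entangle the mutation rates with the reproduction terms and, especially for small $N$, do not by themselves isolate a single constraint. The commuting $4$-cycle above is engineered precisely so that the reproduction rate $\tfrac{p}{N}$ enters linearly and with opposite index shifts on the two sides, leaving the clean difference $\mu_{j,i}-\mu_{k,i}$ after cancellation; checking that this cycle is admissible, in that all four transitions have positive rate for every $N\ge 2$, is exactly what the preliminary positivity step secures.
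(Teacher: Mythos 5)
Your proposal is correct and follows essentially the same route as the paper: the ``only if'' direction via positivity of all off-diagonal mutation rates from detailed balance on the edge $N\mathbf{e}_i \leftrightarrow (N-1)\mathbf{e}_i+\mathbf{e}_j$, followed by Kolmogorov's cycle criterion on exactly the same four-state cycle, and the ``if'' direction plus the identification of $\nu_{N,p}$ by direct verification of detailed balance for the Dirichlet--multinomial weights. The only difference is cosmetic: the paper carries out the detailed-balance computation for a slightly more general generator with type-dependent reproduction rates $p_j$ (selection at birth), of which your calculation is the constant-$\pmb{p}$ special case.
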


The previous result is not surprising because of its analogy with the theory on the measure-valued Fleming\,--\,Viot process studied in \cite{ethier_fleming-viot_1993}. 
Indeed, the measure-valued Fleming\,--\,Viot process is  reversible if and only if its mutation factor is parent independent (see e.g.\ \cite[Thm.\ 8.2]{ethier_fleming-viot_1993} and \cite[Thm.\ 1.1]{li_reversibility_1999}.
Although the \emph{``if part''} in Lemma \ref{thm_reversible_distrib} is well known in the literature on Moran processes, we have not found an explicit statement, or a proof, of this equivalence on the level of prelimit Markov chains considered here.
Thus, for the sake of completeness, we provide a proof of Lemma \ref{thm_reversible_distrib} in Appendix \ref{appendix:poof_reversibility}.

Section \ref{sec:reversible} is devoted to the study of the spectral properties of $\mathcal{L}_{N,p}$, for $p \ge 0$, and its applications to the study of the convergence to stationarity. 
Our results in this section include a complete description of the set of eigenvalues and eigenfunctions of $\mathcal{L}_{N,p}$ and an explicit expression for its transition function.
The eigenfunctions of $\mathcal{L}_{N,p}$, $p > 0$, are explicitly given in terms of \emph{multivariate Hahn polynomials}, which are orthogonal with respect to the compound Dirichlet multinomial distribution (cf.\ \cite{MR0406574,khare_rates_2009}). 
The eigenfunctions of $\mathcal{L}_{N}$, i.e.\ for $p = 0$, are explicitly given in terms of \emph{multivariate Krawtchouk polynomials}, which are orthogonal with respect to the multinomial distribution (cf.\ \cite{MR3258404,MR184284,zhou_composition_2009}).

\begin{remark}[Relation to Ewens' sampling distribution]
	The Moran model, where mutation rates are constant across all types, i.e., $\mu_{i,j} = \theta$ for all $i,j$, has a connection to the Ewens' sampling distribution. 
	This link is explored, for instance, in \cite[\S\ 3.2]{Watterson1984} and \cite[\S\ 4.1]{zhou_examples_2008}. 
	Notably, in this scenario, we can treat mutation types as indistinguishable and ignore individual allelic types, resulting in a Markov process on the space of integer partitions. 
	For example, in a $3$-type Moran process, states $(3,2,2)$, $(2,3,2)$, and $(2,2,3)$ correspond to the same integer partition of $N=7$. 
	It would be worthwhile to explore if the techniques introduced in this paper could be used to compute the spectral elements of the lumped Markov chain in this scenario.
	This research direction falls beyond the scope of this paper, it was suggested by one of the anonymous reviewers and merits further investigation.
\end{remark}

\subsection*{Cutoff phenomenon}
The \emph{cutoff phenomenon} has been a rich topic of research on Markov chains since its introduction by the works of Aldous, Diaconis and Shahshahani in the 1980s (cf.\ \cite{aldous_random_1983,aldous_shuffling_1986, diaconis_generating_1981}). 
A Markov chain presents a cutoff if it exhibits an abrupt transition in its convergence to stationarity.
Some of the most used notions of convergence are, as we consider here, the {total variation} and the {chi-square distances}. 
A good introduction to this subject can be found in the classic book of Levin and Peres \cite[Ch.\ 18]{levin_markov_2017} and in the exhaustive work of Chen, Saloff-Coste et al.\ \cite{chen_cutoff_2006,chen_l2-cutoffs_2017,chen_cutoff_2008,chen_l2-cutoff_2010,MR1490046}.

A typical scenario for the existence of a cutoff is a Markov chain with a high degree of symmetry. 
Hence, the cutoff phenomenon has been deeply studied for the movement on $N$ independent particles on $K$ sites, a model which is usually known as the \emph{product chain}. 
Ycart \cite{ycart_cutoff_1999} studied the cutoff in total variation for $N$ independent particles driven by a diagonalisable rate matrix. 
Later, Barrera et al.\ \cite{barrera_cut-off_2006} and Connor \cite{connor_separation_2010} studied the cutoff on this model according to other notions of distance. 
See also
\cite{chen_l2-cutoffs_2017}, \cite{chen_cutoffs_2018}, \cite{lacoin_product_2015} and \cite[Ch.\ 20]{levin_markov_2017} for more recent studies about the cutoff on product chains. 
The Moran model we consider here preserves the high level of symmetry of the product chain, but the movements of the particles are not independent. Indeed, the particles interact according to a reproduction process that favours the jumps to the sites with greater proportions of individuals.

Before formally defining the cutoff phenomenon, let us recall the \emph{chi-square divergence} (sometimes called ``distance''), which naturally arises in the context of reversible Markov chains.
The chi-square divergence of $\nu_2$ with respect to the target distribution $\nu_1$ is defined by
\[
	\chi^2(\nu_2 \mid \nu_1) := \sum_{\omega \in \Omega} \frac{[\nu_2(\omega) - \nu_1(\omega)]^2}{\nu_1(\omega)} = \| \nu_2 - \nu_1 \|^2_{\frac{1}{\nu_1}},
\]
where $\| \cdot \|_{\frac{1}{\nu_1}}$ stands for the norm in $l^2(\mathbb{R}^{\Omega}, \frac{1}{\nu_1})$, and $\frac{1}{\nu_1} $ is the measure $\omega \mapsto 1/\mu_1(\omega)$.

The chi-square divergence is not a metric, but a measure of the difference between two probability distributions.
Note that the chi-square divergence, as well as the total variation distance, are special cases of the so called $f-$divergence functions, which measure the ``difference'' between two probability distributions \cite{IEEE_chi-square}. In this context, $\chi^2(\mu_2 \mid \mu_1)$ is also known as \emph{Pearson} \emph{chi-square divergence}.

Abusing notation, let us define the functions $\chi^2_{\eta}$ and $\operatorname{d}^{\mathrm{TV}}_{\eta}$, as follows 
\begin{align*}
	\operatorname{d}^{\mathrm{TV}}_\eta(t) &:= \operatorname{d}^{\mathrm{TV}}(\delta_{\eta} \mathrm{e}^{ t \mathcal{L}_{N,p} }, \nu_{N,p}) = \frac{1}{2} \sum_{\xi \in \mathcal{E}_{K,N}} \left| \left(\mathrm{e}^{t \mathcal{L}_{N,p}} \delta_{\xi} \right) (\eta) - \nu_{N,p}(\xi)\right|, \nonumber \\ % \label{eq:defTVdistance} \\
	\chi^2_{\eta}(t) &:= \chi^2( \delta_{\eta} \mathrm{e}^{t \mathcal{L}_{N,p}} \mid \nu_{N,p} ) =  \sum_{\xi \in \mathcal{E}_{K,N}} \frac{ \left[ \left(\mathrm{e}^{t \mathcal{L}_{N,p}} \delta_{\xi} \right) (\eta) - \nu_{N,p}(\xi)\right]^2 }{\nu_{N,p}(\xi)}. % \label{def:Chi2_dist2stat}
\end{align*}
The functions $\operatorname{d}^{\mathrm{TV}}_\eta$ and $\chi^2_{\eta}$
are thus measures of the convergence to stationary of the process driven by $\mathcal{L}_{N,p}$ at time $t$ and with initial configuration $\eta \in \mathcal{E}_{K,N}$.
In agreement with \cite{khare_rates_2009,zhou_examples_2008} we call $\chi^2_{\eta}$ and $\operatorname{d}^{\mathrm{TV}}_{\eta}$ the \emph{total variation} and the \emph{chi-square distances} to stationarity, respectively.

As the number of individuals varies we obtain an infinite family of continuous-time finite Markov chains $\{(\mathcal{E}_{K,N}, \mathcal{L}_{N,p}, \nu_{N,p}), N \ge 2\}$. 
For each $N \ge 2$ let us denote by  $\chi^2_{N \mathbf{e}_k}(t)$ (resp.\ 
\(
	\operatorname{d}^{\mathrm{TV}}_{N \mathbf{e}_k}( t )
\))
the chi-square distance (resp.\ total variation distance) to stationarity of the process driven by $\mathcal{L}_{N,p}$ at time $t$, when the initial distribution is concentrated at $N \mathbf{e}_k \in \mathcal{E}_{K,N}$. 
Note that $\chi^2_{N \mathbf{e}_k}(0) \rightarrow \infty$ and $\operatorname{d}^{\mathrm{TV}}_{N \mathbf{e}_k}(0) \rightarrow 1$, when $N \rightarrow \infty$. 

\begin{definition}[Chi-square and total variation cutoff]%\label{def:chi2&TV_cutoff}
	We say that $\{\chi^2_{N \mathbf{e}_k}(t), N \ge 2\}$ exhibits a $(t_N, b_N)$ chi-square cutoff if $t_N \ge 0$, $b_N \ge 0$, $b_N = o(t_N)$ and
\[
	\lim\limits_{c \rightarrow \infty} \limsup\limits_{N \rightarrow \infty} \chi^2_{N \mathbf{e}_k}(t_N + c\, b_N) = 0, \;\; \lim\limits_{c \rightarrow -\infty}  \liminf\limits_{N \rightarrow \infty} \chi^2_{N \mathbf{e}_k} (t_N + c\, b_N) = \infty.
\]
Analogously, we say that $\{ \operatorname{d}^{\mathrm{TV}}_{N \mathbf{e}_k}(t) , N \ge 2\}$ exhibits a $(t_N, b_N)$ total variation cutoff if $t_N \ge 0$, $b_N \ge 0$, $b_N = o(t_N)$ and
\[
	\lim\limits_{c \rightarrow \infty} \limsup\limits_{N \rightarrow \infty} \operatorname{d}^{\mathrm{TV}}_{N \mathbf{e}_k}(t_N + c \, b_N) = 0, \;\; \lim\limits_{c \rightarrow -\infty}  \liminf\limits_{N \rightarrow \infty} \operatorname{d}^{\mathrm{TV}}_{N \mathbf{e}_k}(t_N + c \, b_N) = 1.
\]
The sequences $(t_N)_{N \ge 2}$ and $(b_N)_{N \ge 2}$ are called the \emph{cutoff} and the \emph{window sequences}, respectively. 
\end{definition}

See Definition 2.1 and Remark 2.1 in \cite{chen_cutoff_2008}.

The cutoff phenomenon describes an abrupt transition in the convergence to stationarity: over a negligible period given by the window sequence $(b_N)_{N > 2}$, the distance from equilibrium drops from near its initial value to near zero at a time given by the cutoff sequence $(t_N)_{N \ge 2}$.

A stronger condition for the existence of a $(t_N, b_N)$ chi-square cutoff (resp.\ total variation cutoff)  is the existence of the limit
\[
	G_k(c) := \lim\limits_{N \rightarrow \infty} \chi^2_{N \mathbf{e}_k}(t_N + c \, b_N) \;\;\left(\text{resp. } H_k(c) := \lim\limits_{N \rightarrow \infty} \operatorname{d}^{\mathrm{TV}}_{N \mathbf{e}_k}(t_N + c \, b_N) \right),
\]
for a function $G_k$ (resp. $H_k$), for $k \in \left[ K \right]$, satisfying:
\[
\lim\limits_{c \rightarrow -\infty} G_k(c) = \infty \text{ and } \lim\limits_{c \rightarrow \infty} G_k(c) = 0, \;\;
\left( \text{resp. } \lim\limits_{c \rightarrow -\infty} H_k(c) = 1 \text{ and } \lim\limits_{c \rightarrow \infty} H_k(c) = 0) \right).
\]
Actually, in this case the $(t_N, b_N)$ cutoff is said to be \emph{strongly optimal}, see e.g.\ Definition 2.2 and Proposition 2.2 in \cite{chen_cutoff_2008}. 
See Chapter 2 in \cite{chen_cutoff_2006} and Sections 2.1 and 2.2 of \cite{chen_cutoff_2008} for more details about the definition of $(t_N, b_N)$ cutoff and window optimality.

The next two results establish the existence of cutoff phenomena in the chi-square and the total variation distances for the multi-allelic Moran process driven by $\mathcal{L}_{N,p}$, for $p \ge 0$, when the initial distribution is concentrated at $N \mathbf{e}_k$, for $k \in \left[ K \right]$.
In the chi-square case, we are able to explicitly provide the limit profile of the distance. 
Moreover, we prove that the total variation distance to stationarity of the mutation process driven by $\mathcal{L}_{N}$, i.e.\ for $p=0$, has a Gaussian profile when all the individuals are initially of the same type.

\begin{theorem}[Strongly optimal chi-square cutoff when $N \rightarrow \infty$]\label{thm:cutoff}
	For $k \in \left[ K \right]$, with $K \ge 2$, $p \ge 0$ and every $c \in \mathbb{R}$, we have
	\begin{equation}\label{eq:chi^2cutoff}
		\lim\limits_{N \rightarrow \infty} \chi^2_{N \mathbf{e}_k} \left( t_{N,c} \right) = {\exp \{ K_{k,p} \mathrm{e}^{-c} \} - 1},
	\end{equation}
	where \(t_{N,c} = \displaystyle \frac{\ln N + c}{2 |\pmb{\mu}|}\) and
	\(
		K_{k,p} = \displaystyle \frac{|\pmb{\mu}|(|\pmb{\mu}| - \mu_k)}{\mu_k (|\pmb{\mu}| + p)}.
	\)
	Consequently, the Markov process driven by $\mathcal{L}_{N,p}$ has a strongly optimal $\left( \frac{\ln N}{2|\pmb{\mu}|}, 1 \right)$ chi-square cutoff when $N \rightarrow \infty$.
\end{theorem}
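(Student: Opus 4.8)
The plan is to turn the reversibility of $\mathcal{L}_{N,p}$ (Lemma \ref{thm_reversible_distrib}) into a single spectral sum and then extract its limit profile degree by degree. Writing $p^{(N,p)}_t$ for the transition function of the process driven by $\mathcal{L}_{N,p}$, reversibility with respect to $\nu_{N,p}$ gives the standard return-time identity together with its spectral expansion,
\[
	\chi^2_{\eta}(t) = \frac{p^{(N,p)}_{2t}(\eta,\eta)}{\nu_{N,p}(\eta)} - 1 = \sum_{L=1}^N \mathrm{e}^{2 \lambda_{L,p} t}\, S_L(\eta), \qquad S_L(\eta) := \sum_{\deg \varphi = L} \varphi(\eta)^2,
\]
where the inner sum runs over an $L^2(\nu_{N,p})$-orthonormal basis of the degree-$L$ eigenspace (the multivariate Hahn polynomials for $p>0$, the multivariate Krawtchouk polynomials for $p=0$), and $\lambda_{L,p} = -|\pmb\mu| L - \frac{p}{N}L(L-1)$ by Remark \ref{corol:spectrum_reversible}. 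The quantity $S_L(\eta)$ is the diagonal of the degree-$L$ reproducing kernel; everything reduces to evaluating it at the corner $\eta = N\mathbf{e}_k$.

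Substituting $t = t_{N,c} = (\ln N + c)/(2|\pmb\mu|)$ yields, for each fixed $L$,
\[
	\mathrm{e}^{2 \lambda_{L,p} t_{N,c}} = N^{-L}\mathrm{e}^{-cL}\exp\Bigl\{-\tfrac{p L(L-1)}{N|\pmb\mu|}(\ln N + c)\Bigr\} = N^{-L}\mathrm{e}^{-cL}\bigl(1 + o(1)\bigr),
\]
so the degree-$L$ term converges to the prescribed coefficient as soon as $S_L(N\mathbf{e}_k) \sim K_{k,p}^L N^L / L!$. The case $p=0$ makes this transparent and can be settled directly: $\mathcal{L}_{N}$ generates $N$ \emph{independent} copies of the single-particle chain $Q_{\pmb\mu} = -|\pmb\mu|(I - \mathbf{1}\mathbf{q}^{\mathsf T})$ with $\mathbf{q} = \pmb\mu/|\pmb\mu|$, whose semigroup is the explicit mixture $\mathrm{e}^{t Q_{\pmb\mu}} = \mathrm{e}^{-|\pmb\mu|t} I + (1 - \mathrm{e}^{-|\pmb\mu|t})\mathbf 1 \mathbf q^{\mathsf T}$. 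Independence then gives $p^{(N,0)}_{2t}(N\mathbf{e}_k, N\mathbf{e}_k) = (q_k + (1-q_k)\mathrm{e}^{-2|\pmb\mu|t})^N$ and $\nu_{N}(N\mathbf{e}_k) = q_k^N$, whence
\[
	\chi^2_{N\mathbf{e}_k}(t) = \Bigl(1 + \tfrac{1-q_k}{q_k}\mathrm{e}^{-2|\pmb\mu|t}\Bigr)^N - 1 \xrightarrow[N\to\infty]{} \exp\{K_{k,0}\mathrm{e}^{-c}\}-1,
\]
since $\frac{1-q_k}{q_k} = \frac{|\pmb\mu|-\mu_k}{\mu_k} = K_{k,0}$. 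This proves the theorem for $p=0$ and confirms $S_L(N\mathbf{e}_k) = \binom{N}{L}K_{k,0}^L$.

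For $p>0$ the particles interact, so independence is lost and I would work from the explicit data obtained in Section \ref{sec:reversible}. The stationary weight is immediate: with $\pmb\alpha = N\pmb\mu/p$, $\nu_{N,p}(N\mathbf{e}_k) = (\alpha_k)_{(N)}/(|\pmb\alpha|)_{(N)}$. The real work, and the \textbf{main obstacle}, is the corner evaluation of the degree-$L$ reproducing kernel: either directly, by showing $p^{(N,p)}_{2t}(N\mathbf{e}_k,N\mathbf{e}_k)/\nu_{N,p}(N\mathbf{e}_k)$ is a closed form whose $N\to\infty$ asymptotics at $t=t_{N,c}$ produce the profile, or via the Hahn eigenfunctions, by establishing
\[
	S_L(N\mathbf{e}_k) = \binom{N}{L}\,g_{k,p,L}(N), \qquad g_{k,p,L}(N) \xrightarrow[N\to\infty]{} K_{k,p}^L ,
\]
where the extra factor $|\pmb\mu|/(|\pmb\mu|+p)$ appearing in $K_{k,p}$ emerges from ratios of increasing factorials of $\pmb\alpha$ in the Hahn normalisation. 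At the corner the multivariate Hahn polynomials collapse to univariate ones, so the task is the asymptotics of their squared $L^2(\nu_{N,p})$-norms, which is where the calculation concentrates.

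The final ingredient is a uniform tail bound justifying the interchange of limit and sum over $L$. Here the quadratic term $-\frac{p}{N}L(L-1)$ in $\lambda_{L,p}$ works in my favour, accelerating the decay of high-degree terms; combined with a crude estimate $S_L(N\mathbf{e}_k) \le C^L N^L/L!$ uniform in $N$, dominated convergence for series gives
\[
	\lim_{N\to\infty}\chi^2_{N\mathbf{e}_k}(t_{N,c}) = \sum_{L=1}^\infty \frac{(K_{k,p}\mathrm{e}^{-c})^L}{L!} = \exp\{K_{k,p}\mathrm{e}^{-c}\}-1,
\]
which is (\ref{eq:chi^2cutoff}). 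The limit profile $G_k(c) = \exp\{K_{k,p}\mathrm{e}^{-c}\}-1$ satisfies $G_k(c)\to\infty$ as $c\to-\infty$ and $G_k(c)\to 0$ as $c\to+\infty$ (using $K_{k,p}>0$, valid since $|\pmb\mu|>\mu_k$ whenever $K\ge2$), and the window $b_N = 1 = o(\ln N)=o(t_N)$; by the strong-optimality criterion recalled after the definition of cutoff, this establishes the strongly optimal $\bigl(\tfrac{\ln N}{2|\pmb\mu|},1\bigr)$ chi-square cutoff.
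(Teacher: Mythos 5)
Your route is essentially the paper's: both arguments expand $\chi^2_{N\mathbf{e}_k}(t)=\sum_{L=1}^N \mathrm{e}^{2\lambda_{L,p}t}\,h_L(N\mathbf{e}_k,N\mathbf{e}_k;p)$ with $h_L$ the diagonal of the degree-$L$ reproducing kernel, take the termwise limit at $t=t_{N,c}$, and close with a dominated-convergence bound of the form $C(\gamma\mathrm{e}^{-c})^L/L!$. One genuine difference is your treatment of $p=0$: you derive the closed form $\bigl(1+\tfrac{1-q_k}{q_k}\mathrm{e}^{-2|\pmb{\mu}|t}\bigr)^N-1$ from the return-time identity and the fact that under $\mathcal{L}_{N}$ the $N$ particles evolve independently, so the return probability to the corner factorises as $\bigl((\mathrm{e}^{2tQ_{\pmb{\mu}}})_{kk}\bigr)^N$. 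That is correct (the fibre of $N\mathbf{e}_k$ under the lumping map is the single configuration $(k,\dots,k)$) and gives a self-contained, more elementary derivation; the paper reaches the same expression via the Krawtchouk kernel evaluation \eqref{eq:kernel_poly_Ne_k_Ne_k}.

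For $p>0$, however, the step you flag as the ``main obstacle'' --- the corner evaluation $S_L(N\mathbf{e}_k)$ of the degree-$L$ Hahn kernel --- is the entire content of the case, and your proposal does not carry it out. The paper does not re-derive it either: it imports the closed form \eqref{eq:kernel_Nek&Nek} from Khare and Zhou (their Eq.\ (2.18)),
\[
	h_L(N\mathbf{e}_k,N\mathbf{e}_k;p)=\binom{N}{L}\,\frac{(|\pmb{\alpha}|+2L-1)\,(|\pmb{\alpha}|)_{(L-1)}\,(|\pmb{\alpha}|-\alpha_k)_{(L)}}{(|\pmb{\alpha}|+N)_{(L)}\,(\alpha_k)_{(L)}},\qquad \pmb{\alpha}=N\pmb{\mu}/p,
\]
after which the termwise limit $(K_{k,p}\mathrm{e}^{-c})^L/L!$ and the uniform majorant $3(\gamma\mathrm{e}^{-c})^L/L!$ with $\gamma=\max\{1,K_{k,0}\}$ (obtained by bounding each factor of the telescoping product by $\gamma$) follow exactly as you describe. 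So the proposal is structurally sound but incomplete at this one point: you must either quote that formula or establish the asymptotics of the squared Hahn norms yourself, since without it the coefficient $K_{k,p}^L$ --- in particular the factor $|\pmb{\mu}|/(|\pmb{\mu}|+p)$ --- is asserted rather than proved. The concluding strong-optimality step (monotone limit profile, window $1=o(\ln N)$) is fine as written.
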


\begin{theorem}[Total variation cutoff when $N \rightarrow \infty$]\label{thm:TVcutoff}
	For every $k \in \left[ K \right]$, with $K \ge 2$, $p \ge 0$ and every $c  > 0$, we have
	\begin{align*}
		 \mathrm{d}^{\mathrm{TV}}_{N \mathbf{e}_k} \left( \frac{\ln N - c}{2 |\pmb{\mu}|} \right) &\ge 1 - 32  \kappa_k \mathrm{e}^{-c},\\
		\lim\limits_{N \rightarrow \infty} \mathrm{d}^{\mathrm{TV}}_{N \mathbf{e}_k} \left( \frac{\ln N + c}{2 |\pmb{\mu}|} \right) &\le \frac{1}{2} \sqrt{{\exp \{ K_{k,p} \mathrm{e}^{-c} \} - 1}},
	\end{align*}
	where $\kappa_k = \max\limits_{r: r \neq k}\displaystyle \frac{\mu_r}{\mu_k} \wedge 1$ and
	\(
		K_{k,p} = \displaystyle \frac{|\pmb{\mu}|(|\pmb{\mu}| - \mu_k)}{\mu_k (|\pmb{\mu}| + p)}.
	\)
	Consequently, the Markov process driven by $\mathcal{L}_{N,p}$ exhibits a $\left( \frac{\ln N}{2|\pmb{\mu}|}, 1 \right)$ total variation cutoff when $N \rightarrow \infty$.

	Moreover, when $p = 0$ and for every $c \in \mathbb{R}$, the limit profile of the total variation distance satisfies
	\begin{equation*}%\label{eq:dtvcutoff}
		\lim\limits_{N \rightarrow \infty} \operatorname{d}^{\mathrm{TV}}_{N \mathbf{e}_k}(t_{N,c}) = 2 \Phi\left( \frac{1}{2} \sqrt { K_{k,0} \mathrm{e}^{-c} } \right) - 1,
	\end{equation*}
	where $\Phi$ is the cumulative distribution function of the standard normal distribution.
	Thus, there exists a strongly optimal $\left( \frac{\ln N}{2|\pmb{\mu}|}, 1 \right)$ total variation cutoff for the process driven by $\mathcal{L}_{N}$ when $N \rightarrow \infty$.
\end{theorem}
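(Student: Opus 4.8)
The plan is to establish the two displayed inequalities separately, deduce the cutoff by sandwiching, and then obtain the exact Gaussian profile at $p=0$ from a reduction to a total-variation central limit theorem for multinomials.

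For the upper bound I would use the elementary comparison $\operatorname{d}^{\mathrm{TV}}(\nu_1,\nu_2) = \frac12\|\nu_1-\nu_2\|_1 \le \frac12\sqrt{\chi^2(\nu_1 \mid \nu_2)}$, which is Cauchy--Schwarz in $l^2(\mathbb{R}^{\mathcal{E}_{K,N}}, 1/\nu_2)$. With $\nu_1 = \delta_{N\mathbf{e}_k}\mathrm{e}^{t \mathcal{L}_{N,p}}$ and $\nu_2 = \nu_{N,p}$ this reads $\operatorname{d}^{\mathrm{TV}}_{N\mathbf{e}_k}(t) \le \frac12\sqrt{\chi^2_{N\mathbf{e}_k}(t)}$ for every $t \ge 0$. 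Evaluating at $t = (\ln N + c)/(2|\pmb{\mu}|)$ and passing to the limit, Theorem \ref{thm:cutoff} gives $\chi^2_{N\mathbf{e}_k}\to \exp\{K_{k,p}\mathrm{e}^{-c}\}-1$, whence the stated upper bound, which tends to $0$ as $c\to\infty$.

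For the lower bound I would specialise Theorem \ref{thm:lower_bound} to the parent-independent matrix. Writing $Q_{\pmb{\mu}} = -|\pmb{\mu}|\,\mathrm{Id} + \mathbf{1}\pmb{\mu}^{\mathsf{T}}$ exhibits its only nonzero eigenvalue as $-|\pmb{\mu}|$, so $\lambda = |\pmb{\mu}|$ and every associated right eigenvector $V$ satisfies $\pmb{\mu}\cdot V = 0$; since $\|Q_{\pmb{\mu}}\|_\infty = 2(|\pmb{\mu}|-\min_i\mu_i)\le 2|\pmb{\mu}|$, the constant $8(2\lambda+\|Q_{\pmb{\mu}}\|_\infty)$ is at most $32|\pmb{\mu}|$. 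To recover the sharper factor $\kappa_k$ I would run the underlying distinguishing-statistic argument directly on the linear eigenfunction $f(\eta)=\sum_r v_r\eta(r)$ of eigenvalue $-|\pmb{\mu}|$ (a centred type-count): its conditional mean decays like $\mathrm{e}^{-|\pmb{\mu}|t}$, so at $t=(\ln N-c)/(2|\pmb{\mu}|)$ the gap between $\mathbb{E}_{N\mathbf{e}_k}[f(\eta_t)]$ and $\mathbb{E}_{\nu_{N,p}}[f]$ is of order $N^{1/2}\mathrm{e}^{c/2}$, dominating the $O(N^{1/2})$ standard deviations under both laws; a two-sided Chebyshev inequality, after optimising $V$ and estimating the variances through the (Dirichlet-)multinomial formulas, produces $1-32|\pmb{\mu}|\kappa_k\mathrm{e}^{-c}$. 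Combined with the upper bound this sandwiches $\operatorname{d}^{\mathrm{TV}}_{N\mathbf{e}_k}$ and yields the $\left(\frac{\ln N}{2|\pmb{\mu}|},1\right)$ cutoff.

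The substantial part is the Gaussian profile at $p=0$. Here the process driven by $\mathcal{L}_{N}$ is exactly the product chain of $N$ independent single-particle chains with rate matrix $Q_{\pmb{\mu}}$, so started at $N\mathbf{e}_k$ the configuration is multinomial, $\eta_t\sim\mathcal{M}(\cdot\mid N,\mathbf{q}_t)$, where the rank-one exponential $\mathrm{e}^{tQ_{\pmb{\mu}}}$ gives $\mathbf{q}_t(j)=\mathrm{e}^{-|\pmb{\mu}|t}\delta_{kj}+(1-\mathrm{e}^{-|\pmb{\mu}|t})\mu_j/|\pmb{\mu}|$, while $\nu_{N}=\mathcal{M}(\cdot\mid N,\mathbf{q}_\infty)$ with $\mathbf{q}_\infty(j)=\mu_j/|\pmb{\mu}|$. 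At $t_{N,c}=(\ln N+c)/(2|\pmb{\mu}|)$ one has $\mathrm{e}^{-|\pmb{\mu}|t}=N^{-1/2}\mathrm{e}^{-c/2}$, hence $\mathbf{q}_t-\mathbf{q}_\infty=N^{-1/2}\mathbf{v}_c$ with $\mathbf{v}_c=\mathrm{e}^{-c/2}(\delta_{\cdot k}-\mathbf{q}_\infty)$, a shift at the critical $N^{-1/2}$ scale. I would then invoke a multivariate local central limit theorem to show that $\operatorname{d}^{\mathrm{TV}}(\mathcal{M}(\cdot\mid N,\mathbf{q}_t),\mathcal{M}(\cdot\mid N,\mathbf{q}_\infty))$ converges to the total-variation distance between the two limiting Gaussians on the zero-sum hyperplane, which share the covariance $\Sigma=\operatorname{diag}(\mathbf{q}_\infty)-\mathbf{q}_\infty\mathbf{q}_\infty^{\mathsf{T}}$ and whose means differ by $\mathbf{v}_c$. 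Using the closed form $2\Phi\!\left(\tfrac12\sqrt{\mathbf{v}_c^{\mathsf{T}}\Sigma^{+}\mathbf{v}_c}\right)-1$ for the total variation between two Gaussians with common covariance, together with the elementary computation $\mathbf{v}_c^{\mathsf{T}}\Sigma^{+}\mathbf{v}_c=\mathrm{e}^{-c}(1-\mathbf{q}_\infty(k))/\mathbf{q}_\infty(k)=K_{k,0}\mathrm{e}^{-c}$ (obtained by solving $\Sigma\mathbf{u}=\delta_{\cdot k}-\mathbf{q}_\infty$ on the hyperplane), produces exactly the stated profile $2\Phi\!\left(\tfrac12\sqrt{K_{k,0}\mathrm{e}^{-c}}\right)-1$.

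The hard part is this final step: upgrading the weak multinomial CLT to convergence in total variation, with a mean drifting at scale $N^{-1/2}$ and a degenerate, simplex-supported covariance. I would treat it through a lattice local limit theorem in the $K-1$ free coordinates, controlling the ratio of the multinomial mass function to the Gaussian density uniformly on the bulk $\{|\eta-N\mathbf{q}_\infty|\lesssim\sqrt{N\log N}\}$ and discarding the tails, so that the discrete $l^1$ distance between the two multinomials converges to the $L^1$ distance between the Gaussians; the shared covariance is what makes the two density ratios cancel away from the shifted region and keeps the limit finite and equal to the Mahalanobis expression above.
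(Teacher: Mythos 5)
Your proposal is correct in substance and matches the paper on the two bounds: the upper bound is the same Cauchy--Schwarz comparison $\operatorname{d}^{\mathrm{TV}}_{N\mathbf{e}_k}(t)\le\frac12\sqrt{\chi^2_{N\mathbf{e}_k}(t)}$ fed by Theorem \ref{thm:cutoff}, and the lower bound is Theorem \ref{thm:lower_bound} specialised to $Q_{\pmb{\mu}}$ with $\lambda=|\pmb{\mu}|$, $\|Q_{\pmb{\mu}}\|_\infty\le 2|\pmb{\mu}|$ and an optimised eigenvector of the form $\mu_k^{-1}\mathbf{e}_k-\mu_s^{-1}\mathbf{e}_s$ (the paper does exactly this rather than re-running Wilson's argument from scratch). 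Where you genuinely diverge is the $p=0$ profile. Your identification of the time-$t$ law as $\mathcal{M}(\cdot\mid N,\mathbf{q}_t)$ via the product-chain/lumping structure and the rank-one exponential $\mathrm{e}^{tQ_{\pmb{\mu}}}=\mathrm{e}^{-|\pmb{\mu}|t}\operatorname{Id}+(1-\mathrm{e}^{-|\pmb{\mu}|t})\mathbf{1}\pmb{\pi}^{\mathsf T}$ is correct and arguably cleaner than the paper's derivation through the Krawtchouk kernel expansion. But you then attack the total variation between the two multinomials with a full multivariate local CLT on the simplex, which is the heaviest and riskiest step of your plan. The paper avoids it entirely: since $\mathbf{q}_t(j)/\mathbf{q}_\infty(j)=1-\mathrm{e}^{-|\pmb{\mu}|t}$ is the \emph{same} constant for every $j\neq k$, the likelihood ratio between the two multinomials is a function of the single coordinate $\xi_k$ alone, so the total variation distance collapses exactly (not just asymptotically) to $\operatorname{d}^{\mathrm{TV}}\bigl(\operatorname{Bin}(N,\pi_k),\operatorname{Bin}(N,\tilde\pi_k)\bigr)$, and the Gaussian profile then follows from the one-dimensional Lemma \ref{lemma:LemmaA2} of Nestoridi and Olesker-Taylor. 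Your Mahalanobis computation $\mathbf{v}_c^{\mathsf T}\Sigma^{+}\mathbf{v}_c=K_{k,0}\mathrm{e}^{-c}$ does give the right answer, so the multivariate route would land in the same place, but you would have to actually prove the lattice-to-Gaussian $l^1$ convergence with a drifting mean and a degenerate covariance; I recommend combining your own observation about the multinomial law with this exact sufficiency reduction, which makes the hard step one-dimensional and citable.
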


Proof of Theorem \ref{thm:cutoff} and \ref{thm:TVcutoff} will be given in Section \ref{subsec:reversibleMoranModel}. 

% Figures \ref{fig:chi2} and \ref{fig:dtv} illustrate the convergence of $\chi^2_{N \mathbf{e}_k}(t_{N,c})$ and $\operatorname{d}^{\mathrm{TV}}_{N \mathbf{e}_k}(t_{N, c})$, for $t_{N,c} = \frac{\ln N + c}{2 |\pmb{\mu}|}$, to $G_k(c) = \exp \{ K_{k,p} \mathrm{e}^{-c} \} - 1$ and $H_k(c) = 2 \Phi\left( \frac{1}{2} \sqrt { K_{k,0} \mathrm{e}^{-c} } \right) - 1$, respectively, when $N \rightarrow \infty$. 

% \begin{figure}[hbtp]
% 	\begin{center}
% 		\includegraphics[scale=0.5]{chi2-1.pdf}\hspace{8mm}%
% 		\includegraphics[scale=0.5]{chi2-2.pdf}
% 	\end{center}
	
% 	\caption{For parameters $K=4$, $k\in[K]$, $\mu = (0.7,0.8,1.0,1.3 )$, $p=1.7$: left panel shows 
% 		$\chi^2_{N \mathbf{e}_k}(t_{N,c})$ as a function of $N, \;2 \leq N \leq 128$ for $c=0.4$, and
% 		right panel shows $G_k(c)$ and $\chi^2_{N \mathbf{e}_k}(t_{N,c})$ as functions of $c$, with $-1 \leq c \leq 1$,
% 		for $N=150$.}
% 	\label{fig:chi2}
% \end{figure}

% \begin{figure}[hbtp]
% 	\begin{center}
% 		\includegraphics[scale=0.5]{dtv1bb.pdf}\hspace{8mm}%
% 		\includegraphics[scale=0.5]{dtv2bb.pdf}
% 	\end{center}
	
% 	\caption{For parameters $K=4$, $k\in[K]$, $\mu = (0.7,0.8,1.0,1.3 )$, $p = 0$: left panel shows 
% 		$\operatorname{d}^{\mathrm{TV}}_{N \mathbf{e}_k}(t_{N,c})$ as a function of $N, \;2 \leq N \leq 256$ for $c=0.17$ and
% 		right panel shows $H_k(c)$ and $\operatorname{d}^{\mathrm{TV}}_{N \mathbf{e}_k}(t_{N,c})$ as functions of $c$, with $-6 \leq c \leq 8$, for $N=100$.}
% 	\label{fig:dtv}
% \end{figure}

During the proof of Theorem \ref{thm:cutoff}, we prove the following result which is of independent interest.
\begin{corollary}[Law of the process driven $\mathcal{L}_{N}$ ($p = 0$)]\label{corol:explicitLaw_t}
	The law of the process driven by $\mathcal{L}_{N}$ at time $t$ when initially all the individuals are of type $k \in \left[ K \right]$ is multinomial $\mathcal{M}\left( \cdot \mid N, \frac{\pmb{\mu}}{|\pmb{\mu}|}(1 - \mathrm{e}^{- |\pmb{\mu}| t}) + \mathrm{e}^{-|\pmb{\mu}| t}\mathbf{e}_k\right)$.
\end{corollary}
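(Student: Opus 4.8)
The plan is to exploit the fact that the mutation generator $\mathcal{L}_{N}$ (the case $p=0$) drives a system of $N$ \emph{independent} particles, each moving on $\left[ K \right]$ according to the single-particle rate matrix $Q_{\pmb{\mu}}$. Indeed, $\mathcal{L}_{N}$ is the compound chain associated with $Q_{\pmb{\mu}}$ in the sense of Section~\ref{sec:two_model}: under the identification of $\mathbb{R}^{\mathcal{E}_{K,N}}$ with the symmetric functions $\operatorname{Sym}\big(\mathbb{R}^{\left[ K \right]^N}\big)$ via $\psi_{K,N}$, the semigroup $\mathrm{e}^{t\mathcal{L}_{N}}$ is the restriction to symmetric functions of the tensor-product semigroup $\big(\mathrm{e}^{tQ_{\pmb{\mu}}}\big)^{\otimes N}$ generated by $\sum_{n=1}^N Q_{\pmb{\mu}}^{(n)}$. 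Consequently, if all $N$ particles start at type $k$, they remain i.i.d.\ at every time $t$, and the occupancy vector $\eta$ (counting how many particles sit at each type) is, by elementary multinomial sampling, distributed as $\mathcal{M}(\cdot \mid N, \mathbf{q}_t)$, where $\mathbf{q}_t$ is the common single-particle law at time $t$ started from $k$, i.e.\ the $k$-th row of $\mathrm{e}^{t Q_{\pmb{\mu}}}$. It therefore suffices to compute that row.

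The second step is to evaluate $\mathrm{e}^{tQ_{\pmb{\mu}}}$ explicitly, which is immediate thanks to the rank-one structure of $Q_{\pmb{\mu}}$. Writing $M$ for the matrix with $(i,j)$ entry $\mu_j$, one reads off from \eqref{eq:def:Qrev_matrix} that $Q_{\pmb{\mu}} = M - |\pmb{\mu}|\,\mathrm{Id}$. Since $M^2 = |\pmb{\mu}|\, M$, the matrix $P := M/|\pmb{\mu}|$ is idempotent, so $\mathrm{e}^{sP} = \mathrm{Id} + (\mathrm{e}^{s}-1)P$; taking $s = t|\pmb{\mu}|$, using $M = |\pmb{\mu}|\,P$ and the fact that $\mathrm{Id}$ commutes with $M$, I obtain
\[
 \mathrm{e}^{tQ_{\pmb{\mu}}} = \mathrm{e}^{-|\pmb{\mu}|t}\,\mathrm{Id} + \bigl(1 - \mathrm{e}^{-|\pmb{\mu}|t}\bigr)\frac{M}{|\pmb{\mu}|}.
\]
Reading off the $k$-th row yields $\mathbf{q}_t = \mathrm{e}^{-|\pmb{\mu}|t}\,\mathbf{e}_k + (1 - \mathrm{e}^{-|\pmb{\mu}|t})\,\pmb{\mu}/|\pmb{\mu}|$, which is exactly the parameter appearing in the statement; combined with the first step this gives the claimed multinomial law.

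The only genuinely delicate point is the rigorous justification of the first step, namely the identification of the $\mathcal{E}_{K,N}$-valued process generated by $\mathcal{L}_{N}$ with the empirical occupancy of $N$ independent $Q_{\pmb{\mu}}$-chains; this rests on the correspondence between functions on $\mathcal{E}_{K,N}$ and symmetric functions on $\left[ K \right]^N$ developed in Section~\ref{sec:two_model} and on the factorization of the $N$-particle generator as $\sum_{n} Q_{\pmb{\mu}}^{(n)}$, whose semigroup is $\big(\mathrm{e}^{tQ_{\pmb{\mu}}}\big)^{\otimes N}$. Everything after that—the matrix exponential and the multinomial sampling—is routine. As an alternative, entirely self-contained route I could instead verify the ansatz directly: insert $\mu_t = \mathcal{M}(\cdot\mid N,\mathbf{q}_t)$ into the forward equation $\partial_t \mu_t = \mu_t \mathcal{L}_{N}$ and check that it holds precisely when $\mathbf{q}_t$ solves the single-particle ODE $\dot{\mathbf{q}}_t = \mathbf{q}_t Q_{\pmb{\mu}}$ with $\mathbf{q}_0 = \mathbf{e}_k$, reducing the claim to the same single-particle computation while bypassing the tensor-product picture.
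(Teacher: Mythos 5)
Your proposal is correct, but it proves the corollary by a genuinely different route from the paper. The paper obtains Corollary \ref{corol:explicitLaw_t} inside the proof of Theorem \ref{thm:TVcutoff} by purely spectral means: it starts from the eigen-expansion \eqref{eq:explicit_transition_eta_t} of the transition kernel in terms of the multivariate Krawtchouk kernel polynomials, substitutes the explicit formula \eqref{eq:kernel_poly_Ne_k} for $h_L(\mathbf{x}, N\mathbf{e}_k;0)$, exchanges the order of summation, and performs two binomial resummations to land on the product form $\nu_N(\xi)(1-\mathrm{e}^{-|\pmb{\mu}|t})^{N-\xi_k}\bigl[(1-\mathrm{e}^{-|\pmb{\mu}|t})+|\pmb{\mu}|\mathrm{e}^{-|\pmb{\mu}|t}/\mu_k\bigr]^{\xi_k}$, which is then recognised as the claimed multinomial law. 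You instead exploit the probabilistic structure: for $p=0$ the chain is the lumped image of the product chain $\mathcal{D}_N$ (this is exactly the intertwining $\mathcal{Q}_N\Phi_{K,N}=\Phi_{K,N}\mathcal{D}_N$ of Lemma \ref{lemma:relation_generator}, which lifts to the semigroups), so starting from $N\mathbf{e}_k$ the particles stay i.i.d.\ and the occupancy vector is multinomial with parameter the $k$-th row of $\mathrm{e}^{tQ_{\pmb{\mu}}}$; the rank-one identity $M^2=|\pmb{\mu}|M$ then gives $\mathrm{e}^{tQ_{\pmb{\mu}}}=\mathrm{e}^{-|\pmb{\mu}|t}\,\mathrm{Id}+(1-\mathrm{e}^{-|\pmb{\mu}|t})M/|\pmb{\mu}|$, whose $k$-th row is exactly the stated parameter. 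Your computation of the matrix exponential is correct, and the lumping step is fully supported by the machinery of Section \ref{sec:two_model}. What each approach buys: yours is more elementary, avoids orthogonal polynomials entirely, and explains \emph{why} the law is multinomial; the paper's derivation is not gratuitous, however, because the explicit kernel $(\mathrm{e}^{t\mathcal{L}_N}\delta_\xi)(N\mathbf{e}_k)$ is needed anyway in that proof to reduce $\operatorname{d}^{\mathrm{TV}}_{N\mathbf{e}_k}(t)$ to a total variation distance between two binomial distributions, and it serves as a consistency check on the Krawtchouk spectral decomposition. Your alternative self-contained route via the forward equation would also work but is unnecessary given that the lumping identity is already established in the paper.
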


Several authors have studied the existence of a cutoff in Moran type models. For instance,
Donnelly and Rodrigues \cite{donnelly_convergence_2000} proved the existence of a cutoff for the two-allelic neutral Moran model in the separation distance.
In order to do that, they used a duality property of the Moran process and found an asymptotic expression for the convergence in separation distance for a suitable scaled time, when the number of individuals tends to infinity. 
Khare and Zhou \cite{khare_rates_2009} proved bounds for the chi-square distance in a discrete-time multi-allelic Moran process that implies the existence of a cutoff. 
Diaconis and Griffiths \cite{MR3915331} studied the existence of a chi-square and total variation cutoffs for a discrete-time analogous of the mutation process generated by $\mathcal{L}_{N}$, i.e.\ when $p = 0$.
See also the study of similar models arried out by Diaconis et al.\ \cite{2008Gibbs} using spectral theory.
Theorems \ref{thm:cutoff} and \ref{thm:TVcutoff} sharpen the results in \cite{khare_rates_2009} and \cite{MR3915331}, since they provide the explicit limit profiles for the chi-square and the total variation distances, for $p \ge 0$ and $p=0$, respectively.

The main difficulty in proving the existence of a total variation cutoff is that even if the Markov chain is reversible, the spectral decomposition of the generator of the Markov chains may only provide a good-enough upper bound for the total variation distance.
In general, one needs to find a sufficiently sharp lower bound using another method. In our case, we could obtain such a bound by using the detailed description of the eigenvectors of $\mathcal{Q}_{N,p}$ of lower modulus, i.e.\ Theorem \ref{thm:lower_bound}. 

Theorem \ref{thm:TVcutoff} is, to the best of our knowledge, the first result ensuring the existence of a total variation cutoff phenomenon for the neutral Moran model with parent independent mutation and neutral reproduction, i.e.\ with $p > 0$.
Notice that the bound given by Theorem \ref{thm:lower_bound} applies to a general class of Moran-type process with neutral reproduction (not necessarily in the parent independent mutation setting).
Hence, this result should facilitate the proof of the existence of total variation cutoff phenomena for other (non-reversible) Markov chains, using other methods in mixing times theory, such as coupling or spectral theory.

\section*{Results for the discrete-time Moran model}

We next briefly discuss some results that can be obtained for an analogous discrete-time multi-allelic Moran model.
Let $(\mu_{i,j})_{i,j \in [K]}$ be a $K$-dimensional stochastic matrix.
We define the mutation and reproduction transition matrices $\pmb{Q}_N$ and $\pmb{A}_N$ as follows
\begin{align*}
	\pmb{Q}_N (\eta, \eta - \mathbf{e}_i + \mathbf{e}_j) = \frac{\eta(i)}{N} \mu_{i,j}, \;\; \text{ and } \;\;
	\pmb{A}_N (\eta, \eta - \mathbf{e}_i + \mathbf{e}_j) = \frac{\eta(i)}{N} \frac{\eta(j)}{N},  
\end{align*}
for every $\eta \in \mathcal{E}_{K,N}$ and $i \neq j \in [K]$.
In addition,
\begin{align*}
	\pmb{Q}_N (\eta, \eta) = 1 - \sum_{i \neq j} \pmb{Q}_N (\eta, \eta - \mathbf{e}_i + \mathbf{e}_j ) , \;\; \text{ and } \;\;
	\pmb{A}_N (\eta, \eta ) = 1 - \sum_{i \neq j} \pmb{A}_N (\eta, \eta - \mathbf{e}_i + \mathbf{e}_j ).
\end{align*}
Besides, every other entry of $\pmb{Q}_N$ and $\pmb{A}_N$ is null.
Then, $\pmb{Q}_N$ and $\pmb{A}_N$ are well-defined transition matrices defining two discrete-time Markov chains, similar to the ones generated by $\mathcal{ Q}_N$ and $\mathcal{A}_N$.
The discrete-time multi-allelic Moran model is the Markov chain with transition matrix
\( \pmb{Q}_{N,p} := p \, \pmb{Q}_N + (1-p) \, \pmb{A}_N\) for some $p \in (0,1]$.
Note that this is the Moran process defined by \cite[\S\ 4.1.1]{khare_rates_2009} when the mutation probabilities of their model take the form $m_{i,j} = (1-p) \mathbb{1}_{\{ i = j\} } + p \, \mu_{i,j}$.

The transition matrices of the discrete-time mutation and reproduction processes and the generator of their continuous-time analogous viewed as matrices, are connected through the identities 
\begin{equation}\label{eq:link_discrete-continuous}
	\pmb{Q}_N = \mathrm{I} + \frac{1}{N} \mathcal{ Q}_N \,\, \text{ and } \,\, \pmb{A}_N =  \mathrm{I} + \frac{1}{N} \mathcal{A}_N,
\end{equation}
where $\mathrm{I}$ denotes the identity matrix.
The factor $1/N$ is necessary in order to ensure that the diagonal elements in $\pmb{Q}_N$ and $\pmb{A}_N$ are positive.

Thus, given an irreducible stochastic matrix $(\mu_{i,j})_{i,j \in [K]}$, the results on the eigenvalues and eigenvectors of $\mathcal{Q}_N$ and $\mathcal{A}_N$ (namely, Theorems \ref{thm:eigenvalues_generalisation}, \ref{thm:specA} and \ref{thm:mainspec_intro}) are easily translated to those of $\pmb{Q}_N$ and $\pmb{A}_N$.
In particular, if $\lambda_0 = 1$ and $\lambda_k$, for $k \in \left[ K-1 \right]$, are the $K$ roots, counting algebraic multiplicities, of the characteristic polynomial of $(\mu_{i,j})_{i,j \in [K]}$. 
For every $\eta \in \bigcup\limits_{L=1}^{N} \mathcal{E}_{K-1,L}$, let us define
\begin{equation*} %\label{eq:lametap_intro}
	\lambda_{\eta,p} := 1 -  p \sum_{k=1}^{K-1} \frac{\eta(k)}{N} \lambda_k -(1-p) \frac{|\eta|(|\eta|-1)}{N^2}.
\end{equation*}
Then, the eigenvalues of $\pmb{Q}_{N,p}$, counting algebraic multiplicities, are $1$ and $\lambda_{\eta,p}$, for
$\eta \in \bigcup\limits_{L=1}^{N} \mathcal{E}_{K-1,L}$.

In addition, we have some information on the eigenvectors of $\pmb{Q}_{N,p}$, in terms of the eigenvectors of $(\mu_{i,j})_{i,j \in [K]}$.
Furthermore, when $(\mu_{i,j})_{i,j \in [K]}$ allows a real eigenvalue $\lambda < 1$ we can find an estimate that in the same spirit of Theorem \ref{thm:lower_bound}, provides a lower bound for the total variation distance to stationarity at time $( N \ln (N) - N c )/2$, for every $c \ge 0$.

Adhering strictly to the demonstration of Corollary  \ref{thm_reversible_distrib}, we also get that the discrete-time multi-allelic Moran process is reversible if and only if its mutation matrix is parent independent, i.e.\ if $\mu_{i,j} = \mu_j$.
Besides, the stationary distribution $\nu_{N,p}$ of the Markov chain with transition rates $\pmb{Q}_{N,p}$ satisfies
\begin{equation*}\label{def:nuKNp-discrete}
	\nu_{N,p}(\eta) := \left\{
	\begin{array}{ccc}
		\mathcal{DM}(\eta \mid N, N \pmb{\mu} \, p / (1-p)) & \text{ if } & p \in (0,1)\\
		\mathcal{M }(\eta \mid \pmb{\mu}) & \text{ if } & p = 1,
	\end{array}
	\right.
\end{equation*}
We can also extrapolate the results on the ergodicity for those processes with parent independent mutation rates, obtaining analogous results to Theorems \ref{thm:cutoff} and \ref{thm:TVcutoff}.
In particular, we can ensure that discrete-time Moran model with parent independent mutation exhibits a $\left( {N \ln N}/{2}, N \right)$ total variation cutoff when $N \rightarrow \infty$, when initially all the individuals are of the same type.
The extra factor $N$ in the cutoff time and window is directly linked to the identity \eqref{eq:link_discrete-continuous} and the expression for the eigenvalues of $\pmb{Q}_{N,p}$.
One can check that in the discrete time setting when $p \in (0,1)$ and taking $l_{N,c} = \lceil (N \ln(N) + N c)/2 \rceil$, for $c \in \mathbb{R}$, we get
\begin{equation}\label{eq:profile_chi-square_discrete}
	\lim\limits_{N \rightarrow \infty} \chi^2_{N \mathbf{e}_k} \left( l_{N,c} \right) = {\exp \left\{ (1-p) \frac{1 - \mu_k}{\mu_k} \mathrm{e}^{-c} \right\} - 1},
\end{equation}
and when $p = 0$,
\begin{equation*}%\label{eq:dtvcutoff}
	\lim\limits_{N \rightarrow \infty} \operatorname{d}^{\mathrm{TV}}_{N \mathbf{e}_k}(l_{N,c}) = 2 \Phi\left( \frac{1}{2} \sqrt { \frac{1 - \mu_k}{\mu_k}} \mathrm{e}^{-c/2}  \right) - 1.
\end{equation*}
The explicit expression \eqref{eq:profile_chi-square_discrete} for the chi-square limit profile strengthen Proposition 4.7 in \cite{khare_rates_2009}.

\section*{Discussion and open problems}\label{sec:discussion}

% In this paper we have studied the spectral properties of the neutral multi-allelic Moran model with irreducible mutation rate matrix $Q$. We have explicitly described the spectrum of the neutral multi-allelic Moran process using the spectrum of $Q$. 
% The fact that diagonalisable matrices are dense in the space of real matrices allowed us to extend our results about the spectrum of $\mathcal{Q}_{N,p}$, for $p \ge 0$, to processes with irreducible mutation rate matrices non necessarily diagonalisable. 
% Moreover, when $Q$ is diagonalisable we have proved that the composition chain driven by $\mathcal{Q}_{N}$ is also diagonalisable and we have described the set of right eigenvalues using the symmetrised tensor products of the right eigenvalues of $Q$. 

% The neutral multi-allelic Moran process was proved to be reversible if and only if the mutation scheme is parent independent. In this case, we have studied the spectral properties of $\mathcal{L}_{N,p}$, which allowed us to provide non-asymptotic bounds for the speed of convergence to stationarity and to prove the existence of a strongly optimal cutoff in chi-square and total variation distances when $p \ge 0$ and $p = 0$, respectively.

There are several future directions to explore in order to better understand Moran models. 
Despite the fact that it is non-reversible in general, the neutral multi-allelic Moran model with reversible mutation process seems an interesting model for both theoretical and practical reasons (cf.\ \cite{SCHREMPF201788}). 
One possible first step to study the eigenfunctions of $\mathcal{Q}_{N,p}$ when $Q$ is reversible, could be the study of the eigenfunctions of the generator of the reproduction process $\mathcal{A}_{N}$, for $K \ge 3$, extending the results in \cite[\S 4.2.2]{zhou_examples_2008}.

There are several ways to continue the study of the existence of cutoff phenomena for Moran processes. 
For example, using the results of Zhou and Lange \cite{zhou_composition_2009}, it could be possible to prove the existence of a (strongly optimal) chi-square cutoff for the composition chain, when the process driven by the mutation matrix is reversible. 
% It remains to prove (or refute) Conjecture \ref{conjecture:TVcutoff} about the existence of a strongly optimal total variation cutoff phenomenon for the Moran process driven by $\mathcal{L}_{N,p}$, for $p > 0$. 
A possible generalisation of Theorems \ref{thm:cutoff} and \ref{thm:TVcutoff} would be to prove the existence of a cutoff phenomenon for the Moran process with parent independent mutation, when initially all the individuals are not of the same type.

% A further study of the exponential ergodicity of the Moran process driven by $\mathcal{Q}_{N,p}$ could be carried out by studying the \emph{Dirichlet form} of the Moran process. 
% The results in this direction of the recent paper of Hermon and Salez \cite{MR3984254} for the zero range process could be applied to the mutation process driven by $\mathcal{Q}_{N}$, in order to bound its \emph{Poincar\'e's constant}. 
% It would be very interesting to obtain a result in the spirit of \emph{Aldous' conjecture} to control the spectral of $\mathcal{Q}_{N,p}$ using the spectral gap of $Q$, i.e.\ something similar to Theorem 2 in \cite{MR3984254}, but for the neutral multi-allelic Moran process driven by $Q_{N,p}$.

Another interesting problem to address is the study of the spectrum of the multi-allelic Moran process with selection, i.e.\ when the parameter $p$ in  \eqref{def:generatorQKNp} may depend on $i$ and $j$.
Under parent independent settings and selection at birth (cf.\ \cite{durrett_probability_2008}, \cite{etheridge_mathematical_2011} and \cite{muirhead_modeling_2009}) the infinitesimal rate matrix of the process is reversible, but an explicit expression for its spectral gap is unknown. 
The multi-allelic Moran process with selection at death (cf.\ \cite{muirhead_modeling_2009}) seems more complicated from the spectral point of view because it is non-reversible. 
However, this process is very interesting in population genetics but also because of its interpretation as a Fleming\,--\,Viot particle system, which approximates the quasi-stationary distribution of a continuous-time Markov chain (see e.g.\ \cite{asselah_quasistationary_2011}, \cite{cloez_quantitative_2016} and
\cite{ferrari_quasi_2007}). 
We believe the exact results exhibit here for the neutral Moran process will offer clues on the study of the spectrum of the more complicated Moran processes with selection.

\subsection*{Structure of the article} 
The rest of the paper is organised as follows. 
In Section \ref{sec:two_model} we study the state spaces of the neutral multi-allelic Moran models, when the individuals are assumed distinguishable or indistinguishable, respectively. 
We particularly focus on the study of the vector spaces of real functions defined on the state spaces of these two models. The notations and results in Section \ref{sec:two_model} are used to prove our main theorems in Section \ref{sec:eigenvalues_moran}. 
Sections \ref{sec:spectrum_mutation}, \ref{subsec:reproduction_process} and \ref{subsec:walkfv} are devoted to the proofs of Theorems \ref{thm:eigenvalues_generalisation}, \ref{thm:specA} and \ref{thm:mainspec_intro}, respectively. 
In Section \ref{section:SLEM&ergodicity} we focus on the applications of our main results to the asymptotic exponential ergodicity in total variation distance of the process driven by $\mathcal{Q}_{N,p}$ to its stationary distribution, using the eigenstructure of $Q$.
In particular, we prove Corollary \ref{corol:ergodicity_gral_TV} and Theorem \ref{thm:lower_bound}.
In Section \ref{sec:reversible} we consider the neutral multi-allelic Moran process with parent independent mutation and provide a complete description of its eigenvalues and eigenfunctions. 
%Using spectral techniques, we prove Theorem \ref{corol:conv_Xi2_reversible} which provides detailed estimates of the rate of convergence of the process to its stationary distribution. 
We also prove Theorems \ref{thm:cutoff} and \ref{thm:TVcutoff} about the existence of a cutoff phenomena in the chi-square and the total variation distances, when initially all the individuals are of the same type. 

\section{State spaces for distinguishable and indistinguishable particle processes}\label{sec:two_model}

The Moran model can be seen as a system of $N$ interacting particles on $K$ sites moving according to a continuous-time Markov chain. 
For the same model, we study two different situations.
Although the sites themselves are supposed to be distinguishable, the $N$ particles can be considered either 
 \emph{distinguishable} or \emph{indistinguishable}. 
According to both interpretations we describe two state spaces for the two Markov chains modelling the $N$ independent particle systems. 
We study how the vector spaces of the real functions defined on those state spaces are related.

For $N$ \emph{distinguishable} particles on $K$ sites, the state space of the model describes the
location of each particle, i.e.\ it is the set $\left[ K \right]^N$. 
This is the state space considered in
\cite{etheridge_mathematical_2011} and \cite{ferrari_quasi_2007}.
The set of real functions on $\left[ K \right]$, denoted $\mathbb{R}^{\left[ K \right]}$, may be endowed
with a vector space structure. Thus, the set of real functions on $\left[ K \right]^N$ may be considered
as  a tensor product of $N$ vectors in  $\mathbb{R}^K$ as we commented in the introduction. 

% State spaces for indistinguishable particle process
When the $N$ particles are considered \emph{indistinguishable}, what matters is the number of particles present at each of the $K$ sites.
The state space for this second model, as in \cite{cloez_quantitative_2016} and \cite{etheridge_coalescent_2009}, is the set $\mathcal{E}_{K,N}$ defined by (\ref{def:state_spaceEKN}) 
with cardinality equal to $\operatorname{Card} \left(\mathcal{E}_{K,N}\right) = \binom{ K - 1 + N }{ N }$.

For any $k$, $1 \leq k \leq K$, let us denote by $x_k$ the $k$-th coordinate function defined by
\[
	x_k : \eta=(\eta(1),\eta(2), \dots, \eta(K)) \in \mathcal{E}_{K,N} \mapsto \eta(k) \in \mathbb{R}.
\]
Let us also denote by $\mathbf{x}^{\alpha}$ the monomial on $\mathcal{E}_{K,N}$ defined by
\begin{equation}\label{def:monomialEKN}
	\mathbf{x}^{\alpha} := x_1^{\alpha_1} x_2^{\alpha_2} \dots x_{K}^{\alpha_K},
\end{equation}
where $\alpha \in \mathcal{E}_{K,L}$, for $L \in \left[ N \right]$. 

For $0 \leq L \leq N$,
let us denote by $H_{K,L}$ the vector space of homogeneous polynomial functions of
degree $L$ in variables $x_k, \; 1 \leq k \leq K$ on  $\mathcal{E}_{K,N}$ and the null function.
From the definition of $\mathcal{E}_{K,N}$, it follows that
the function $\sum_{k=1}^K x_k$ is equal to the constant function equal to $N$.
$H_{K,L}$ may be considered as a subspace of $H_{K,L'}$ when
$0 \leq L < L' \leq N$ by identifying
$P(x_1,x_2,\dots,x_K) \in H_{K,L}$ with 
$$
\frac{1}{N^{L'-L}} \left(\sum_{k=1}^K x_k \right)^{L'-L}  P(x_1,x_2,\dots,x_K)
\in H_{K,L'}.
$$

We will say that the \emph{degree of homogeneity} of a homogeneous polynomial
 $P$ is $L$, if $P$ is the sum of monomials 
$\mathbf{x}^\alpha = x_1^{\alpha_1}x_2^{\alpha_2}\dots x_K^{\alpha_K}$ with the same value of
$|\alpha| =L$, and the value $L$ is the smallest as possible. This
corresponds to the fact that there is no factor equal to $x_1+x_2+\dots+x_K$ in the factorisation of $P$.
The {\em total degree} of a polynomial $P$ is the minimum value of $L$
such that $P = P_L + R$ where $P_L$
is homogeneous of degree $L$ and all the monomials in $R$ have a maximum degree strictly less than $L$.
Such an expression for $P$, which is not unique, may be obtained by replacing
$x_K$ by $N-\sum_{k=1}^{K-1} x_k$ in $P$ and adding the monomials
in $P(x_1,\dots,x_{K-1}, N-\sum_{k=1}^{K-1} x_k)$ of maximum total degree to define
$P_L$.

The following result shows that each element in $\mathbb{R}^{\mathcal{E}_{K,N}}$ can be identify to a polynomial in $H_{K,N}$.

\begin{lemma} \label{thm:interp} 
Let $K \geq 2$ and $N \geq 1$. Then
\begin{itemize}
\item[(a)] For any real function $f$ on $\mathcal{E}_{K,N}$ there exists 
a unique homogeneous polynomial $P \in H_{K,N}$ of degree $N$ such that 
$f(\eta) = P(\eta)$, for all $\eta \in \mathcal{E}_{K,N}$.
\item[(b)] The set of monomials of degree $N$
\[
	\mathcal{B}_{H_{K,N}} := \{ \mathbf{x}^{\alpha}, \alpha \in \mathcal{E}_{K,N}  \}
\]
where $\mathbf{x}^{\alpha}$ is defined by (\ref{def:monomialEKN}), is a basis of $\mathbb{R}^{\mathcal{E}_{K,N}}$.
\end{itemize}
\end{lemma}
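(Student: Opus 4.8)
The plan is to reduce both parts to a single dimension count supplemented by an explicit Lagrange-type interpolation formula. First I would record the two relevant dimensions. The number of distinct monomials indexed by $\mathcal{B}_{H_{K,N}}$ is the number of $\alpha \in \mathcal{E}_{K,N}$, which is $\binom{K-1+N}{N}$, and this equals $\operatorname{Card}(\mathcal{E}_{K,N}) = \dim \mathbb{R}^{\mathcal{E}_{K,N}}$. Hence for (b) it suffices to show that the restrictions to $\mathcal{E}_{K,N}$ of the degree-$N$ monomials $\mathbf{x}^{\alpha}$ span $\mathbb{R}^{\mathcal{E}_{K,N}}$ (equivalently, are linearly independent), and (a) will follow immediately from (b).

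The heart of the argument is to realise every indicator function $\delta_{\eta_0}$, $\eta_0 \in \mathcal{E}_{K,N}$, as an element of $H_{K,N}$. For this I would introduce
\[
L_{\eta_0}(x) := \prod_{k=1}^K \frac{(x_k)_{[\eta_0(k)]}}{\eta_0(k)!},
\]
using that $(\eta_0(k))_{[\eta_0(k)]} = \eta_0(k)!$, so that $L_{\eta_0}(\eta_0) = 1$. The key computation is that for $\eta \in \mathcal{E}_{K,N}$ the falling factorial $(\eta(k))_{[\eta_0(k)]}$ vanishes as soon as $\eta(k) < \eta_0(k)$; therefore $L_{\eta_0}(\eta) \neq 0$ forces $\eta(k) \geq \eta_0(k)$ for every $k$, and since $|\eta| = |\eta_0| = N$ this upgrades to $\eta = \eta_0$. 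Thus $L_{\eta_0}$ agrees with $\delta_{\eta_0}$ on $\mathcal{E}_{K,N}$. The polynomial $L_{\eta_0}$ has total degree $N$ but is not homogeneous; to land in $H_{K,N}$ I would homogenise it exactly as described before the lemma, replacing each monomial of degree $L < N$ by its product with $\big(\tfrac{1}{N}\sum_{k} x_k\big)^{N-L}$, a factor equal to $1$ on $\mathcal{E}_{K,N}$. This produces $\tilde{L}_{\eta_0} \in H_{K,N}$ with $\tilde{L}_{\eta_0} = \delta_{\eta_0}$ as functions on $\mathcal{E}_{K,N}$.

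With this in hand, part (b) follows: the $\delta_{\eta_0}$ span $\mathbb{R}^{\mathcal{E}_{K,N}}$, each $\tilde{L}_{\eta_0}$ lies in the span of $\mathcal{B}_{H_{K,N}}$, so $\mathcal{B}_{H_{K,N}}$ spans, and by the matching cardinalities it is a basis. For part (a), existence is then the explicit interpolation formula $f = \sum_{\eta_0 \in \mathcal{E}_{K,N}} f(\eta_0)\, \tilde{L}_{\eta_0} \in H_{K,N}$, and uniqueness is immediate from the linear independence established in (b): if $P, P' \in H_{K,N}$ agree on $\mathcal{E}_{K,N}$, then $P - P'$ is a combination of the basis monomials vanishing on $\mathcal{E}_{K,N}$, hence the zero element of $H_{K,N}$.

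I expect the main obstacle to be the verification that $L_{\eta_0}$ reproduces $\delta_{\eta_0}$, that is, the combinatorial step in which nonnegativity of each coordinate together with the fixed total mass $N$ promotes the inequalities $\eta(k) \ge \eta_0(k)$ to equalities. The homogenisation step and the dimension bookkeeping are routine once this interpolation identity is secured; the only point worth noting there is that multiplying by powers of $\tfrac{1}{N}\sum_k x_k$ does not alter values on $\mathcal{E}_{K,N}$, which is precisely what makes the identification $H_{K,L} \hookrightarrow H_{K,N}$ legitimate.
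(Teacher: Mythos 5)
Your proposal is correct and follows essentially the same route as the paper: your interpolating polynomial $L_{\eta_0}(x)=\prod_k (x_k)_{[\eta_0(k)]}/\eta_0(k)!$ is, factor by factor, identical to the paper's $P_{\alpha}(\mathbf{x})=\prod_k\prod_{a=0}^{\alpha_k-1}\frac{x_k-a}{\alpha_k-a}$, the vanishing/homogenisation arguments are the same, and the dimension count closes the argument in both versions. The only difference is the order in which (a) and (b) are deduced from the interpolation identity, which is immaterial.
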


The proof of Lemma \ref{thm:interp} is mostly technical and is deferred to Appendix \ref{appendix:two_model}.

\begin{remark}[Dimension of $H_{K,N}$]
As a consequence of Lemma \ref{thm:interp}-(b) we have that the dimension of $H_{K,N}$ equals $\binom{K + N - 1}{N}$.
\end{remark}

% \begin{example}[Interpolation polynomials on $\mathcal{E}_{3,4}$]
% For $K=3, \; N=4$, one may check that 
% \begin{eqnarray} \label{ex:interp34}
% P_{[4,0,0]} : (x_1,x_2,x_3) &\mapsto& \frac{1}{768} x_1 (3x_1 - x_2 -x_3)(x_1-x_2-x_3)(x_1-3x_2-3 x_3), \nonumber \\
% P_{[3,1,0]} : (x_1,x_2,x_3) &\mapsto&  \frac{1}{48} x_1 x_2 (x_1-x_2-x_3)(3 x_1 - x_2-x_3), \nonumber \\
% P_{[2,2,0]} : (x_1,x_2,x_3) &\mapsto&  -\frac{1}{64} x_1 x_2 (3x_1-x_2-x_3)(x_1 - 3x_2 + x_3), \nonumber \\
% P_{[2,1,1]} : (x_1,x_2,x_3) &\mapsto&  \frac{1}{8} x_1 x_2 x_3 (3x_1 - x_2 - x_3). \nonumber
% \end{eqnarray}
% All the other $P_{\eta}$, for $\eta \in \mathcal{E}_{3,4}$, can be deduced using the permutations of $\{1,2,3\}$.
% \end{example}

A natural link between the two state spaces of distinguishable and indistinguishable particles is
\begin{equation} \label{def:phiKN}
\phi_{K,N}: \, (k_1,k_2,\dots, k_N) \in \left[ K \right]^N \mapsto (\eta(1),\eta(2),\dots,\eta(K)) \in \mathcal{E}_{K,N},
\end{equation}
where $ \eta(k) = \operatorname{Card}(\{n ,\; 1 \leq n \leq N,\; k_n = k \})$, for all $k \in \left[ K \right]$. 
The function $\phi_{K,N}$ forgets the identity of the $N$ particles.
Note that $\psi_{K,N}$, defined in (\ref{def:psiKN}), is a right inverse of $\phi_{K,N}$, i.e.\ $\phi_{K,N} \circ \psi_{K,N}  = \operatorname{Id}_{\mathcal{E}_{K,N}}$, where $\operatorname{Id}_{\mathcal{E}_{K,N}}$ stands for the identity function on $\mathcal{E}_{K,N}$.

Let us denote by $\operatorname{Sym}$ the {\em symmetrisation} endomorphism, acting on function $f \in \mathbb{R}^{\left[ K \right]^N}$ as defined by (\ref{def:sim_operator}).
In fact, $\operatorname{Sym}$ is the projection onto the subspace of symmetric functions, denoted $\mathrm{Sym}\big(\mathbb{R}^{\left[ K \right]^N}\big)$.  
%Indeed, note that the space of symmetric function is a linear subspace of $\mathbb{R}^{\mathcal{E}_{K,N}}$. 
%In addition, for every function $h$ symmetric we have $\operatorname{Sym} \, h = h$, thus $\operatorname{Sym} \,(\operatorname{Sym} \, f) = \operatorname{Sym} \, f$, for each $f \in \mathbb{R}^{\left[ K \right]^N}$, i.e.\ $\operatorname{Sym}^2 = \operatorname{Sym}$. 
%See \cite[Chapter XI]{pease_methods_1965} for more details about projectors.

Note that $\phi_{K,N}$ is a symmetric function on $\left[ K \right]^N$. 
Furthermore, the identity $\phi_{K,N}(\mathbf{x}) = \phi_{K,N}(\mathbf{y})$ holds if and only if $\mathbf{y}$ is obtained from $\mathbf{x}$ by a permutation of its components.
Hence, if $f$ is symmetric and $\mathbf{x}$ and $\mathbf{y}$ are elements in  $\left[ K \right]^N$ such that $\phi_{K,N}(\mathbf{x}) = \phi_{K,N}(\mathbf{y})$, then $f(\mathbf{x}) = f(\mathbf{y})$.

In general, for every function $f$ on $\left[ K \right]^N$ it is not always possible to define a function $\tilde{f}$ on $\mathcal{E}_{K,N}$ such that  $f = \tilde{f} \circ \phi_{K,N}$ holds. We claim that such a function $\tilde{f}$ exists if and only if $f$ is symmetric.

\begin{lemma}[Link between $\mathbb{R}^{\mathcal{E}_{K,N}}$ and $\operatorname{Sym}(\mathbb{R}^{\left[ K \right]^N})$]\label{lemma:simmetric}
	The linear operator 
	\begin{equation} \label{def:_PhiKN}
		\Phi_{K,N}: f \in \operatorname{Sym}\left(\mathbb{R}^{{\left[ K \right]}^N}\right) \mapsto f \circ \psi_{K,N} \in \mathbb{R}^{\mathcal{E}_{K,N}},
	\end{equation}
	where $\psi_{K,N}$ is defined by (\ref{def:psiKN}), is an isomorphism. In particular, the dimension of the space of symmetric functions on $\left[ K \right]^N$ is 
	\[
		\dim\left(\operatorname{Sym}\left(\mathbb{R}^{{\left[ K \right]}^N}\right)\right) = \binom{K + N - 1}{N}.
	\]

\end{lemma}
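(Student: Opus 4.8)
The plan is to exhibit an explicit two-sided inverse of $\Phi_{K,N}$, so that bijectivity (hence the isomorphism property) reduces to two short composition identities built from relations already recorded above. I would define the candidate inverse
\[
	\Psi_{K,N}: \tilde{g} \in \mathbb{R}^{\mathcal{E}_{K,N}} \mapsto \tilde{g} \circ \phi_{K,N} \in \mathbb{R}^{\left[ K \right]^N},
\]
where $\phi_{K,N}$ is the occupation-counting map (\ref{def:phiKN}). Both $\Phi_{K,N}$ and $\Psi_{K,N}$ are linear, being precomposition by a fixed map.

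First I would check that $\Psi_{K,N}$ actually takes values in $\operatorname{Sym}(\mathbb{R}^{\left[ K \right]^N})$. This is immediate from the fact noted just above that $\phi_{K,N}$ is a symmetric function on $\left[ K \right]^N$: since permuting the coordinates of a word leaves its occupation vector unchanged, we have $\sigma(\tilde{g} \circ \phi_{K,N}) = \tilde{g} \circ \phi_{K,N}$ for every $\sigma \in \mathcal{S}_N$, so $\Psi_{K,N}(\tilde{g})$ is symmetric.

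Next I would verify the two composition identities. For the first, using $\phi_{K,N} \circ \psi_{K,N} = \operatorname{Id}_{\mathcal{E}_{K,N}}$,
\[
	\Phi_{K,N}\big( \Psi_{K,N}(\tilde{g}) \big) = (\tilde{g} \circ \phi_{K,N}) \circ \psi_{K,N} = \tilde{g} \circ (\phi_{K,N} \circ \psi_{K,N}) = \tilde{g},
\]
so $\Phi_{K,N} \circ \Psi_{K,N} = \operatorname{Id}_{\mathbb{R}^{\mathcal{E}_{K,N}}}$. For the second, take $f \in \operatorname{Sym}(\mathbb{R}^{\left[ K \right]^N})$; then
\[
	\Psi_{K,N}\big( \Phi_{K,N}(f) \big) = (f \circ \psi_{K,N}) \circ \phi_{K,N} = f \circ (\psi_{K,N} \circ \phi_{K,N}).
\]
Here $\psi_{K,N} \circ \phi_{K,N}$ is \emph{not} the identity on $\left[ K \right]^N$; rather, for each $\mathbf{x} \in \left[ K \right]^N$ the word $\psi_{K,N}(\phi_{K,N}(\mathbf{x}))$ is the sorted rearrangement of $\mathbf{x}$, hence a permutation of $\mathbf{x}$. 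Since $f$ is symmetric it is invariant under this rearrangement, giving $f \circ (\psi_{K,N} \circ \phi_{K,N}) = f$. Thus $\Psi_{K,N} \circ \Phi_{K,N} = \operatorname{Id}$ on $\operatorname{Sym}(\mathbb{R}^{\left[ K \right]^N})$, and $\Phi_{K,N}$ is an isomorphism with inverse $\Psi_{K,N}$.

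Finally, the dimension formula follows at once, since an isomorphism preserves dimension:
\[
	\dim\!\left( \operatorname{Sym}\!\left( \mathbb{R}^{\left[ K \right]^N} \right) \right) = \dim \mathbb{R}^{\mathcal{E}_{K,N}} = \operatorname{Card}(\mathcal{E}_{K,N}) = \binom{K + N - 1}{N}.
\]
No step is genuinely difficult; the only point requiring care is that $\psi_{K,N} \circ \phi_{K,N}$ is a sorting map rather than the identity, so it is the \emph{symmetry} of $f$, not a literal inverse relation, that closes the second composition identity.
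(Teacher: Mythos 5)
Your proof is correct and follows essentially the same route as the paper: both exhibit the map $\tilde g \mapsto \tilde g \circ \phi_{K,N}$ as the inverse of $\Phi_{K,N}$. You are in fact slightly more thorough, since the paper only verifies the identity $\Phi_{K,N}\,(h \circ \phi_{K,N}) = h$ and leaves implicit the second composition identity $\Psi_{K,N} \circ \Phi_{K,N} = \operatorname{Id}$, where, as you correctly note, $\psi_{K,N} \circ \phi_{K,N}$ is only a sorting map and the symmetry of $f$ is what closes the argument.
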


\begin{proof}
	Note that $\Phi_{K,N}$ is linear and well defined.
	Moreover, for any function $h$ on ${\mathcal{E}_{K,N}}$, the function $h \circ \phi_{K,N}$ is symmetric on $\left[ K \right]^N$ and satisfies $\Phi_{K,N} \, (h \circ \phi_{K,N}) = h$, proving that $\Phi_{K,N}$ is an isomorphism.
\end{proof}

Lemma \ref{lemma:simmetric} justifies the well definiteness of $\tilde{V}_{\eta}$, defined by (\ref{def:Veta_tilde}), for $\eta \in \bigcup_{L=1}^{N} \mathcal{E}_{K-1, L}$. The relationship between $f$ and $\tilde{f}$ is shown in the following diagram:
\begin{equation*} \label{gr:sym}
\xymatrix{
 \left[ K \right]^N \ar[d]_{{\textstyle \phi_{K,N}}} \ar[rd]^{{\textstyle f}} & \\
 \mathcal{E}_{K,N} \ar[r]_{{\textstyle \tilde{f}}} & \mathbb{R}.
}
\end{equation*} 

We denote by $U_0$ the $K$-dimensional all-one vector, which is always a right eigenvector associated to zero of every $K$-dimensional rate matrix of a continuous-time Markov chain.
Let $K \geq 2$, $N \geq 2$ and $1 \leq L \leq N$ and
let us consider $L$ vectors $V_1, V_2, \dots, V_L$ in $\mathbb{R}^K$, non-proportional to $U_0$,
and $f$ the function equal to the following symmetrised tensor product 
\begin{equation*} \label{def:ff}
f := \operatorname{Sym} ( V_1 \otimes V_2 \otimes \dots \otimes V_L 
\otimes \underbrace{U_0 \otimes \dots \otimes U_0}_{N-L} ) \in \operatorname{Sym}\left(\mathbb{R}^{\left[ K \right]^N}\right).
\end{equation*}
Note that,
\begin{equation} \label{caract:deff}
	f(k_1, k_2, \dots, k_N) = \frac{1}{N!} 
   \sum_{\sigma \in \mathcal{S}_N } V_1(k_{\sigma(1)})V_2(k_{\sigma(2)}) \times \dots \times V_L(k_{\sigma(L)}). 
\end{equation}
We denote by $\mathcal{I}_{L,N}$, for $1 \leq L \leq N$, the set of all injective
applications from $ \left[ L \right] $ to $ \left[ N \right] $. For every $\sigma \in \mathcal{S}_N$, the map $s_\sigma: n \in  \left[ L \right]  \mapsto
\sigma(n) \in \{\sigma(1),\dots,\sigma(L)\}$ is in $\mathcal{I}_{L,N}$ and
$\sigma$ is completely determined by this function $s_{\sigma}$ and a bijective application 
$\beta : (L+1,\dots,N) \rightarrow  \left[ N \right]  \setminus s_{\sigma}( \left[ L \right] )$.
For each $s_{\sigma}$, there are $(N-L)!$ such applications $\beta$. Thus, using (\ref{caract:deff}) we obtain
\begin{align*}
	f(k_1, k_2, \dots, k_N) &= \frac{(N-L)!}{N!} 
   \sum_{s \in \mathcal{I}_{L,N} } V_1(k_{s(1)})V_2(k_{s(2)}) \times \dots \times V_L(k_{s(L)}).
\end{align*}
In order to simplify the calculations we denote by $\xi(V_1,V_2,\dots,V_L)$ the function 
on $\left[ K \right]^N$ defined by
\begin{equation} \label{def:xi}
\xi(V_1,V_2,\dots,V_L): (k_1,k_2,\dots,k_N) \mapsto \sum_{s \in \mathcal{I}_{L,N}}
V_1(k_{s(1)}) V_2(k_{s(2)}) \dots V_L(k_{s(L)}). 
\end{equation}
Note that $\xi(V_1,V_2,\dots,V_L) = \frac{N!}{(N-L)!} f$.
Since $\xi(V_1,V_2,\dots,V_L)$ is symmetric, Lemma \ref{lemma:simmetric} ensures the existence of a unique function $\tilde{\xi}(V_1,V_2,\dots,V_L)$ on $\mathcal{E}_{K,N}$ given by 
\begin{equation}\label{def:xi_tilde}
	\tilde{\xi}(V_1,V_2,\dots,V_L) = \Phi_{K,N} \, {\xi}(V_1,V_2,\dots,V_L).
\end{equation}
The following two identities are thus satisfied:
\begin{equation} \label{carac:propchi}
\xi(V_1,V_2,\dots,V_L) = \tilde{\xi}(V_1,V_2,\dots,V_L) \circ \phi_{K,N}, \;\;\;
\tilde{\xi}(V_1,V_2,\dots,V_L) = \xi(V_1,V_2,\dots,V_L) \circ \psi_{K,N},
\end{equation}
where $\phi_{K,N}$ and $\psi_{K,N}$ are defined in (\ref{def:phiKN}) and (\ref{def:psiKN}), respectively.

The next result provides recursive expressions for the functions $\xi(V_1,\dots,V_L)$ and $\tilde{\xi}(V_1,\dots,V_L)$, for $L \in \left[ N \right]$. Furthermore, we prove that $\tilde{V}_{\eta}$, as defined by (\ref{def:Veta_tilde}), is a polynomial of total degree $|\eta|$, for $\eta \in \bigcup_{L = 1}^N \mathcal{E}_{K-1,L}$.

\begin{lemma} \label{thm:chiV}
The following properties are verified:
\begin{itemize}
\item[(a)] For $L=1$: if $V_1 = [a_1,a_2,\dots,a_K]^T$ is non-proportional to $U_0$,
then $\xi(V_1)$ and $\tilde{\xi}(V_1)$, defined by (\ref{def:xi}) and (\ref{def:xi_tilde}), satisfy
\begin{align} 
\xi(V_1) &: (k_1,k_2,\dots,k_N) \mapsto  \sum_{i=1}^N V_1(k_i), \nonumber \\ % \label{eq:xi1} \\
\tilde{\xi}(V_1) &: (\eta(1),\eta(2),\dots,\eta(K)) \mapsto  \sum_{j=1}^K a_j \eta(j). \label{eq:chi1}
\end{align}
\item[(b)] For any $L,\; 2 \leq L \leq N-1$: if the $L$ vectors 
$V_i = [a_{i,1},a_{i,2},\dots,a_{i,K}]^T$, $1 \leq i \leq L$, are non-proportional to $U_0$, then $\xi(V_1,\dots,V_L)$ and $\tilde{\xi}(V_1,\dots,V_L)$ satisfy
\begin{align} 
\xi(V_1,\dots,V_L) &= \xi(V_1,\dots,V_{L-1}) \xi(V_L) - \sum_{i=1}^{L-1} \xi(V_1,\dots, V_{i-1}, V_i \odot V_L,V_{i+1},\dots, V_{L-1}), \nonumber \\ % \label{eq:chil}\\
\tilde{\xi}(V_1,\dots,V_L) &= \tilde{\xi}(V_1,\dots,V_{L-1}) \tilde{\xi}(V_L)- \sum_{i=1}^{L-1} \tilde{\xi}(V_1,\dots, V_{i-1},V_{i} \odot V_L,V_{i+1},\dots, V_{L-1}), \nonumber %\label{eq:chil_tilde}
\end{align}
where $V_{i} \odot V_L$ stands for the Hadamard (componentwise) product of the vectors $V_i$ and $V_L$.

In particular, when $L=2$ and the two vectors $V_1 = [a_1,a_2,\dots,a_K]^T$ and
$V_2 = [b_1,b_2,\dots,b_K]^T$ are non-proportional to $U_0$,
then $\tilde{\xi}(V_1,V_2)$ is the quadratic polynomial given by
\begin{align}  \label{eq:2degreeVP}
\tilde{\xi}(V_1,V_2) &= \tilde{\xi}(V_1) \tilde{\xi}(V_2) - \tilde{\xi}(V_1 \odot V_2).
\end{align}
\item[(c)] For any $L, \; 1 \leq L \leq N$: if the $L$ vectors
$V_i = [a_{i,1},a_{i,2},\dots,a_{i,K}]^T$, $1 \leq i \leq L$, are non-proportional to $U_0$, then
$\tilde{\xi}(V_1,V_2,\dots,V_L)$ is a polynomial of total degree $L$ satisfying
\begin{equation} \label{eq:prodxi}
\tilde{\xi}(V_1,V_2,\dots,V_L) = \prod_{i=1}^L \tilde{\xi}(V_i) + q,
\end{equation}
where $q$ is a polynomial of total degree strictly less than $L$. 
In particular, $\tilde{V}_{\eta}$, as defined by (\ref{def:Veta_tilde}), is a polynomial of total degree $|\eta|$, for $\eta \in \bigcup_{L = 0}^N \mathcal{E}_{K-1,L}$.
\end{itemize}
\end{lemma}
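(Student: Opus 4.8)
The plan is to prove the three parts in the order stated, since (b) feeds on (a) and (c) is an induction driven by the recursion in (b).

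For part (a) I would simply unwind the definition (\ref{def:xi}). When $L=1$ the index set $\mathcal{I}_{1,N}$ is the set of all maps $[1]\to[N]$, i.e.\ $s(1)$ ranges over $[N]$, so $\xi(V_1)(k_1,\dots,k_N)=\sum_{i=1}^N V_1(k_i)$ is immediate. To read off $\tilde{\xi}(V_1)$ I would compose with $\psi_{K,N}$ via the right-hand identity in (\ref{carac:propchi}): evaluating $\sum_{i}V_1\big((\psi_{K,N}\eta)_i\big)$ and grouping the coordinates of $\psi_{K,N}\eta$ by their common type gives $\sum_{j=1}^K a_j\,\eta(j)$, which is (\ref{eq:chi1}).

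For part (b) the recursion is purely combinatorial. I would expand the product $\xi(V_1,\dots,V_{L-1})\,\xi(V_L)$ as a double sum over an injective $s\colon[L-1]\to[N]$ and a free index $j\in[N]$, then split according to whether $j\notin s([L-1])$ or $j=s(i)$ for some $i$. In the first case the combined map $(s(1),\dots,s(L-1),j)$ is injective and the terms reassemble exactly into $\xi(V_1,\dots,V_L)$; in the second case $k_j=k_{s(i)}$ merges two factors into $V_i(k_{s(i)})V_L(k_{s(i)})=(V_i\odot V_L)(k_{s(i)})$, and the sum over the (now injective) $s$ produces $\xi(V_1,\dots,V_i\odot V_L,\dots,V_{L-1})$. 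Rearranging yields the stated identity for $\xi$. The identity for $\tilde{\xi}$ then follows by composing with $\psi_{K,N}$ and using that this composition is multiplicative on symmetric functions, so $\Phi_{K,N}$ sends products to products; the case $L=2$ is (\ref{eq:2degreeVP}).

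Part (c) I would prove by induction on $L$ using the recursion of (b). The base case $L=1$ is (\ref{eq:chi1}): after the substitution $x_K=N-\sum_{k<K}x_k$ the linear form $\sum_j a_j x_j$ has nonzero degree-one part precisely because $V_1\not\propto U_0$, so its total degree is exactly $1$. In the inductive step, the product $\tilde{\xi}(V_1,\dots,V_{L-1})\,\tilde{\xi}(V_L)$ has, by the hypothesis $\tilde{\xi}(V_1,\dots,V_{L-1})=\prod_{i<L}\tilde{\xi}(V_i)+q'$ with $\deg q'<L-1$, the form $\prod_{i=1}^L\tilde{\xi}(V_i)+q'\tilde{\xi}(V_L)$ with the second summand of total degree $<L$; each subtracted term $\tilde{\xi}(V_1,\dots,V_i\odot V_L,\dots,V_{L-1})$ has only $L-1$ arguments and hence total degree at most $L-1$. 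For this last bound I would first record the crude estimate that $\tilde{\xi}$ of any $m$ vectors is a polynomial of total degree $\le m$ (by multilinearity it reduces to standard basis vectors, where $\tilde{\xi}(e_{j_1},\dots,e_{j_m})$ is a product of falling factorials in the $\eta(j)$), valid even when some $V_i\odot V_L$ is proportional to $U_0$. This yields (\ref{eq:prodxi}) with $q$ of total degree $<L$.

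The main obstacle is certifying that the leading part $\prod_{i=1}^L\tilde{\xi}(V_i)$ does not collapse to lower degree. Each factor $\tilde{\xi}(V_i)$ is, in the reduced variables $x_1,\dots,x_{K-1}$, an affine form whose degree-one part is nonzero exactly because $V_i\not\propto U_0$; since $\mathbb{R}[x_1,\dots,x_{K-1}]$ is an integral domain, the product of these $L$ nonzero linear parts is nonzero, so the total degree is genuinely $L$ and $q$ stays strictly lower. Finally, for $\tilde{V}_\eta$ as in (\ref{def:Veta_tilde}), I would identify it with a scalar multiple of $\tilde{\xi}(U_{k_1},\dots,U_{k_L})$, where $(k_1,\dots,k_L)=\psi_{K-1,L}(\eta)$ and each $U_{k_j}$ with $k_j\ge 1$ is non-proportional to $U_0$ (being a basis vector of $\mathcal{U}$ distinct from $U_0$); applying (\ref{eq:prodxi}) gives total degree $L=|\eta|$.
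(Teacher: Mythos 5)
Your proof is correct and follows essentially the same route as the paper: direct unwinding of the definition for (a), the same injectivity/double-sum split for (b) (you expand the product $\xi(V_1,\dots,V_{L-1})\xi(V_L)$ and regroup, while the paper decomposes the inner sum of $\xi(V_1,\dots,V_L)$ over $i\in[N]\setminus s([L-1])$ — the same identity read in opposite directions), and induction on $L$ via the recursion of (b) for (c). Your treatment of (c) is in fact more careful than the paper's one-line ``using (b) and (a), we can check'': you correctly flag that $V_i\odot V_L$ may be proportional to $U_0$ so the inductive hypothesis cannot be applied to the subtracted terms, supply the crude multilinearity bound $\deg\tilde{\xi}(W_1,\dots,W_m)\le m$ to handle them, and verify via the integral-domain argument that the leading product $\prod_i\tilde{\xi}(V_i)$ genuinely has total degree $L$.
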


The proof of Lemma \ref{thm:chiV} can be found in Appendix \ref{appendix:two_model}.

The following result helps us to construct from a given basis of $\mathbb{R}^K$, the three bases for the three vector spaces $\mathbb{R}^{\left[ K \right]^N}$, $\operatorname{Sym}(\mathbb{R}^{\left[ K \right]^N})$ and $\mathbb{R}^{\mathcal{E}_{K,N}}$, respectively.

\begin{proposition}\label{propo:basis}
	Let $U_0$ be the all-one vector in $\mathbb{R}^K$ and $U_1, U_2, \dots, U_{K-1} \in \mathbb{R}^K$ such that 
	\begin{equation*} % \label{def:basis_RK_generator}
		\mathcal{U} = \{U_0, U_1, \dots, U_{K-1}\}
	\end{equation*}
	is a basis of $\mathbb{R}^K$. The following statements are true:
	\begin{itemize}
		\item[a)] $\mathcal{U}^N$, defined as
		\(
			\mathcal{U}^N := \{ W_1 \otimes W_2 \otimes \dots \otimes W_N, \text{ where } W_i \in \mathcal{U}, \text{ for } i \in \left[ N \right] \}
		\)
		is a basis of $\mathbb{R}^{\left[ K \right]^N}$.
		\item[b)] $\mathcal{S}^N$, defined as
		\[
			\mathcal{S}^N := \{ \underbrace{U_0 \otimes \dots \otimes U_0}_{N \text{ times}} \} \cup \bigcup_{L = 1}^{N} \{ V_{\eta}, \eta \in \mathcal{E}_{K-1,L},\}
		\]
		where $V_{\eta}$ is defined by (\ref{def:Veta}), is a basis of $\operatorname{Sym}\left(\mathbb{R}^{\left[ K \right]^N}\right)$.
		\item[c)] $\tilde{\mathcal{S}}^N$, defined as
		\begin{equation}\label{def:basisEKN}
			\tilde{\mathcal{S}}^N := \{ \underbrace{U_0 \otimes \dots \otimes U_0}_{K \text{ times}} \} \cup \bigcup_{L = 1}^{N} \{ \tilde{V}_{\eta}, \eta \in \mathcal{E}_{K-1,L}\},
		\end{equation}
		where $\tilde{V}_{\eta}$ is defined by (\ref{def:Veta_tilde}), is a basis of $\mathbb{R}^{\mathcal{E}_{K,N}}$.
	\end{itemize}
\end{proposition}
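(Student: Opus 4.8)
The plan is to establish the three statements in sequence, deriving (b) from (a) by symmetrisation and (c) from (b) through the isomorphism $\Phi_{K,N}$ of Lemma~\ref{lemma:simmetric}. In each case the cardinality of the proposed basis will match the dimension of the ambient space, so it suffices to prove either spanning or linear independence.

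For (a), first note that $\operatorname{Card}(\mathcal{U}^N) = K^N = \dim \mathbb{R}^{\left[ K \right]^N}$. Let $P$ be the $K\times K$ matrix whose columns are the coordinates of $U_0, U_1, \dots, U_{K-1}$ in the canonical basis of $\mathbb{R}^K$; since $\mathcal{U}$ is a basis, $P$ is invertible. Expressing each tensor $W_1\otimes\cdots\otimes W_N \in \mathcal{U}^N$ in the canonical tensor basis $\{\mathbf{e}_{i_1}\otimes\cdots\otimes\mathbf{e}_{i_N}\}$ of $\mathbb{R}^{\left[ K \right]^N}$, the associated change-of-basis matrix is the Kronecker product $P^{\otimes N}$, whose determinant is $(\det P)^{N K^{N-1}} \neq 0$. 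Hence $\mathcal{U}^N$ is a basis.

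For (b), I would exploit that $\operatorname{Sym}$ acts block-diagonally on the basis $\mathcal{U}^N$. Group the elements of $\mathcal{U}^N$ according to the multiset of indices occurring in $W_1\otimes\cdots\otimes W_N$; for a multiset $m$ let $B_m$ denote the corresponding block and $\mathrm{span}(B_m)$ its linear span. A permutation of the tensor factors only reorders a fixed multiset, so each $\mathrm{span}(B_m)$ is $\operatorname{Sym}$-invariant, and $\operatorname{Sym}$ sends every element of $B_m$ to one and the same nonzero symmetric tensor $V_m$ (the uniform average over the orderings of $m$, which is nonzero as an average of distinct basis vectors). Therefore $\operatorname{Sym}(\mathrm{span}(B_m)) = \mathbb{R}\,V_m$ is one-dimensional, and since $\mathbb{R}^{\left[ K \right]^N} = \bigoplus_m \mathrm{span}(B_m)$, applying $\operatorname{Sym}$ yields
\[
\operatorname{Sym}\big(\mathbb{R}^{\left[ K \right]^N}\big) = \bigoplus_m \mathbb{R}\,V_m,
\]
so the $V_m$ form a basis. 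It then remains to identify the index set: a size-$N$ multiset over $\{0,1,\dots,K-1\}$ in which $0$ occurs $N-L$ times is encoded by the vector $\eta\in\mathcal{E}_{K-1,L}$ recording the multiplicities of $1,\dots,K-1$, and under this bijection $V_m = V_\eta$ as in (\ref{def:Veta}), the all-zero case $L=0$ giving $U_0\otimes\cdots\otimes U_0$. This identifies $\{V_m\}_m$ with $\mathcal{S}^N$ and proves (b); as a consistency check, $1 + \sum_{L=1}^N \binom{K+L-2}{L} = \binom{K+N-1}{N}$ by the hockey-stick identity, matching $\dim\operatorname{Sym}(\mathbb{R}^{\left[ K \right]^N})$ from Lemma~\ref{lemma:simmetric}.

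Finally, for (c) I would push (b) forward through $\Phi_{K,N}$. By (\ref{def:Veta_tilde}) we have $\tilde V_\eta = V_\eta\circ\psi_{K,N} = \Phi_{K,N}(V_\eta)$, while $\Phi_{K,N}(U_0\otimes\cdots\otimes U_0)$ is the constant function on $\mathcal{E}_{K,N}$; thus $\tilde{\mathcal{S}}^N = \Phi_{K,N}(\mathcal{S}^N)$. Since Lemma~\ref{lemma:simmetric} asserts that $\Phi_{K,N}$ is an isomorphism, the image of the basis $\mathcal{S}^N$ is a basis of $\mathbb{R}^{\mathcal{E}_{K,N}}$, which is (c). The only genuinely delicate step is the bookkeeping in (b): verifying that the $\operatorname{Sym}$-invariant blocks are indexed exactly by the configurations $\eta\in\bigcup_{L=0}^N\mathcal{E}_{K-1,L}$ and that the associated symmetrised tensors coincide with the $V_\eta$ of (\ref{def:Veta}); the linear-algebra content of (a) and the transport in (c) are then routine.
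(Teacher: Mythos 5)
Your proof is correct, and its overall architecture coincides with the paper's: part (a) is the standard tensor-product basis fact, part (b) is obtained by applying $\operatorname{Sym}$ to the basis $\mathcal{U}^N$, and part (c) transports (b) through the isomorphism $\Phi_{K,N}$ of Lemma \ref{lemma:simmetric}. The one place where you genuinely diverge is part (b). The paper only argues that $\operatorname{Sym}$ maps each element of $\mathcal{U}^N$ to some $V_\eta$, so that $\mathcal{S}^N$ is a spanning set; it then explicitly concedes that it has \emph{not} shown $V_\eta\neq V_\alpha$ for $\eta\neq\alpha$, and instead closes the argument by the cardinality bound $\operatorname{Card}(\mathcal{S}^N)\le\binom{K-1+N}{N}$ (via the hockey-stick identity) combined with $\dim\operatorname{Sym}\big(\mathbb{R}^{\left[K\right]^N}\big)=\binom{K-1+N}{N}$, so that a spanning set of minimal size must be a basis. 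Your block decomposition $\mathbb{R}^{\left[K\right]^N}=\bigoplus_m\mathrm{span}(B_m)$ into $\operatorname{Sym}$-invariant subspaces indexed by multisets yields the linear independence of the $V_\eta$ directly: each $V_m$ is a nonzero average of basis vectors lying in its own summand, so the sum $\sum_m\mathbb{R}\,V_m$ is automatically direct. This buys you a self-contained independence argument (and the distinctness of the $V_\eta$ as a byproduct) at the cost of the multiset bookkeeping you flag at the end; the paper's counting argument is shorter but leaves the distinctness of the $V_\eta$ only implicit, recovered a posteriori from the dimension count. Both arguments are complete; your consistency check via the hockey-stick identity reproduces exactly the computation the paper uses as its main step.
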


The proof of Proposition \ref{propo:basis} is deferred to Appendix \ref{appendix:two_model}.

\section{Spectrum of the neutral multi-allelic Moran process} \label{sec:eigenvalues_moran}

The main goal of this section is to prove Theorem \ref{thm:mainspec_intro}. In Section \ref{sec:spectrum_mutation} we prove Theorem \ref{thm:eigenvalues_generalisation} describing the set of eigenvalues of the composition chain $\mathcal{Q}_{N}$ in terms of the eigenvalues of $Q$. Moreover, we construct right eigenvectors of $\mathcal{Q}_{N}$ using the symmetrised tensor product of right eigenvectors of $Q$. Later, in Section \ref{subsec:reproduction_process} we prove Theorem \ref{thm:specA}. Using the results in these two sections we prove Theorem \ref{thm:mainspec_intro} in Section \ref{subsec:walkfv}.

\subsection{Proof of Theorem \ref{thm:eigenvalues_generalisation}} \label{sec:spectrum_mutation}

As we commented in Section \ref{sec:two_model}, the $N$ particles in the neutral multi-allelic Moran type process can be considered distinguishable or indistinguishable. 
Throughout the paper we suppose that $Q$ is irreducible. Thus, $0$ is a simple eigenvalue of $Q$
with eigenvector $U_0$.
The generator for the distinguishable case, denoted by $\mathcal{D}_{N}$, acts on a real function $f$ on $\left[ K \right]^N$ as follows
\[
	\big(\mathcal{D}_{N} f\big)(k_1,k_2,\dots,k_N) := \sum_{i=1}^N \sum_{k=1}^{K} \mu_{k_i,k} 
	\big[f(k_1,\dots k_{i-1},k,k_{i+1},\dots,k_N) - f(k_1,\dots,k_N)\big],
\]
for all $(k_1,k_2,\dots,k_N) \in \left[ K \right]^N$.
If the function is given in a  tensor product form, we get
\begin{equation} \label{eq:Ltens}
\mathcal{D}_{N} \big( V_1 \otimes V_2 \otimes \dots \otimes V_N \big)
= \sum_{n = 1}^N V_1 \otimes V_2 \otimes \dots \otimes Q \, V_n \otimes \dots \otimes V_N,
\end{equation}
where $Q V_n(k) := \sum\limits_{r = 1}^K \mu_{k,r} V_n(r) =  \sum\limits_{r = 1}^K \mu_{k,r} (V_n(r) - V_n(k))$, for all $k \in \left[ K \right]$.

\begin{remark}[$\mathcal{D}_{N}$ as a Kronecker sum]%\label{remark:generatorLKN_Kronecker_sum}
	In fact, the infinitesimal generator satisfies $\mathcal{D}_{N} = Q \oplus Q \oplus \dots \oplus Q$, where $\oplus$ denotes the Kronecker sum. The well-known relationship between the exponential of a Kronecker sum and the Kronecker product of exponential matrices, namely:
	\[
		\exp\{Q \oplus Q \oplus \dots \oplus Q\} = \exp\{Q\} \otimes \exp\{Q\} \otimes \dots \otimes \exp\{Q\},
	\]
	makes clearer the idea that $\mathcal{D}_{N}$ is the infinitesimal generator of the system of $N$ particles moving independently according to the infinitesimal generator $Q$. See \cite[Ch.\ XIV]{pease_methods_1965} and \cite[\S 2.2]{davis_circulant_1979} for further details on the Kronecker sum.
\end{remark}

The Markov chain generated by $\mathcal{D}_{N}$ is usually called \emph{product chain}.
The infinitesimal generator $\mathcal{D}_{N}$ inherits its spectral properties from those of $Q$.  Namely, if $\pi$ is the stationary distribution of $Q$, then $\pi \otimes \pi \otimes \dots \otimes \pi$ is the stationary distribution of $\mathcal{D}_{N}$. 
Moreover, if $V_1, V_2, \dots, V_N$ are $N$ (not necessarily distinct) eigenvectors of $Q$, then $V_1 \otimes V_2 \otimes \dots \otimes V_N$ is an eigenvector of $\mathcal{D}_{N}$. 
Consequently, if $Q$ is diagonalisable, then $\mathcal{D}_{N}$ is also diagonalisable and the tensors products of vectors in an eigen-basis of $Q$ form an eigen-basis of $\mathcal{D}_{N}$, as in Proposition \ref{propo:basis}-(a). 
In particular, if $\lambda_0 = 0, \lambda_1,  \dots, \lambda_{K-1}$ are the $K$ complex eigenvalues of $Q$, then the eigenvalues of $\mathcal{D}_{N}$ are given by the sums of eigenvalues of $Q$, i.e.\ the spectrum of $\mathcal{D}_{N}$ is
\[
	\big\{ 
	z_0 + z_1 + \dots + z_{K-1}: z_i \in \{\lambda_0, \lambda_1 , \dots, \lambda_{K-1}\} \big\}.
\]
See Sections 12.4 and 20.4 in \cite{levin_markov_2017} for the proofs of these results and more details on product chains.

When the $N$ particles are considered indistinguishable, the infinitesimal generator of the Markov chain, denoted by $\mathcal{Q}_{N}$, is that defined by (\ref{def:generatorQKN}), i.e.\
\begin{equation*}
	\big( \mathcal{Q}_{N} f \big)(\eta) = \sum_{i,j \in \left[ K \right]} \eta(i) \mu_{i,j} \left[ f(\eta - \mathbf{e}_i + \mathbf{e}_j) - f(\eta)\right],
\end{equation*}
for all $\eta \in \mathcal{E}_{K,N}$ and for every function $f$ on $\mathcal{E}_{K,N}$. 
Zhou and Lange \cite{zhou_composition_2009} noticed that $\mathcal{Q}_{N}$ is a lumped chain of $\mathcal{D}_{N}$ and used this fact to study the relationship between the spectral properties of both chains. 
They studied the eigenvalues and the left eigenfunctions of $\mathcal{Q}_{N}$. 
In particular, they proved that the stationary distribution of $\mathcal{Q}_{N}$ is multinomial with probability vector $\pi$, denoted $\mathcal{M}(\cdot \mid N, \pi)$, where $\pi$ is the unique stationary probability of $Q$. 
Our approach differs from that on \cite{zhou_composition_2009}: we study the right eigenfunctions of $\mathcal{Q}_{N}$ using the connections between the real functions on $\mathcal{E}_{K,N}$ and the symmetric real functions on $\left[ K \right]^N$ studied in Section \ref{sec:two_model}. 
In addition, our methods allow us to explicitly describe the spectrum of $\mathcal{Q}_{N}$, for every mutation matrix $Q$ generating an irreducible process, even when $Q$ is non-diagonalisable. 
We first study the relationship between the generators $\mathcal{Q}_{N}$ and $\mathcal{D}_{N}$ through the operator $\Phi_{K,N}$.
\begin{lemma}[Link between the generators $\mathcal{Q}_N$ and $\mathcal{D}_N$]\label{lemma:relation_generator}
	For any symmetric function $\xi$ on $\left[ K \right]^N$, the function $\mathcal{D}_{N} \, \xi$ is also symmetric. In addition,
	\begin{equation*} % \label{eq:relation_generators}
		\mathcal{Q}_{N} \, (\Phi_{K,N} \, \xi) = \Phi_{K,N} \, (\mathcal{D}_{N} \, \xi),
	\end{equation*}
	where $\Phi_{K,N}$ is defined by (\ref{def:_PhiKN}).
\end{lemma}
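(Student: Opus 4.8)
The plan is to verify both assertions directly from the definition of $\mathcal{D}_{N}$ as a sum of single-coordinate jump terms, using the symmetry of $\xi$ to pass between tuples in $\left[ K \right]^N$ and configurations in $\mathcal{E}_{K,N}$. Throughout I write $\mathbf{k}^{i \to j}$ for the tuple obtained from $\mathbf{k} = (k_1, \dots, k_N)$ by replacing its $i$-th entry with $j$, so that $(\mathcal{D}_{N} \xi)(\mathbf{k}) = \sum_{i=1}^N \sum_{j=1}^K \mu_{k_i, j} [\xi(\mathbf{k}^{i \to j}) - \xi(\mathbf{k})]$. The central observation, used in both parts, is that a symmetric $\xi$ depends on its argument only through the multiset of its entries, equivalently through $\phi_{K,N}(\mathbf{k}) \in \mathcal{E}_{K,N}$.

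For the symmetry of $\mathcal{D}_{N}\xi$, I would fix $\sigma \in \mathcal{S}_N$ and evaluate $\mathcal{D}_{N}\xi$ at the permuted tuple $\mathbf{k}' = (k_{\sigma(1)}, \dots, k_{\sigma(N)})$, which amounts to computing $\sigma(\mathcal{D}_{N}\xi)$. The $i$-th entry of $\mathbf{k}'$ is $k_{\sigma(i)}$, and the tuple $(\mathbf{k}')^{i \to j}$ has the same multiset of entries as $\mathbf{k}^{\sigma(i) \to j}$, since permuting the untouched coordinates does not alter the multiset. Hence by symmetry $\xi((\mathbf{k}')^{i \to j}) = \xi(\mathbf{k}^{\sigma(i) \to j})$ and $\xi(\mathbf{k}') = \xi(\mathbf{k})$; substituting these and reindexing the outer sum by $i \mapsto \sigma(i)$ collapses $(\mathcal{D}_{N}\xi)(\mathbf{k}')$ to $(\mathcal{D}_{N}\xi)(\mathbf{k})$, which is exactly the claimed invariance $\sigma(\mathcal{D}_{N}\xi) = \mathcal{D}_{N}\xi$.

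For the intertwining relation, I would evaluate both sides at an arbitrary $\eta \in \mathcal{E}_{K,N}$ and set $\mathbf{k} = \psi_{K,N}(\eta)$, so that the right-hand side is $(\mathcal{D}_{N}\xi)(\mathbf{k})$. I would then group the sum over positions $i \in \left[ N \right]$ according to the type $r = k_i$; by the construction of $\psi_{K,N}$ there are exactly $\eta(r)$ positions with $k_i = r$. For any such position, replacing $k_i = r$ by $j$ produces a tuple whose multiset coincides with that of $\psi_{K,N}(\eta - \mathbf{e}_r + \mathbf{e}_j)$, so symmetry gives $\xi(\mathbf{k}^{i \to j}) = (\Phi_{K,N}\xi)(\eta - \mathbf{e}_r + \mathbf{e}_j)$, a quantity independent of which of the $\eta(r)$ positions of type $r$ was chosen. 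Summing the $\eta(r)$ identical contributions and relabelling $(r, j)$ as $(i, j)$ turns $(\mathcal{D}_{N}\xi)(\mathbf{k})$ into $\sum_{i,j} \eta(i) \mu_{i,j} [(\Phi_{K,N}\xi)(\eta - \mathbf{e}_i + \mathbf{e}_j) - (\Phi_{K,N}\xi)(\eta)]$, which is precisely $(\mathcal{Q}_{N}(\Phi_{K,N}\xi))(\eta)$.

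The only real work is the bookkeeping: one must track that the single-coordinate moves of $\mathcal{D}_{N}$ on $N$ labelled particles aggregate, under the ``forget the labels'' map $\phi_{K,N}$, into the type-to-type moves of $\mathcal{Q}_{N}$, with the combinatorial factor $\eta(r)$ emerging from the $\eta(r)$ indistinguishable particles of type $r$. No separate case analysis is needed for the diagonal terms $j = r$, which contribute zero on both sides, nor for moves out of empty types, since the factor $\eta(r) = 0$ automatically suppresses them.
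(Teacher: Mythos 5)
Your proof is correct and follows essentially the same route as the paper's: evaluate $\mathcal{D}_{N}\xi$ at $\psi_{K,N}(\eta)$, group the positions by type $r=k_i$, and use the symmetry of $\xi$ to identify the $\eta(r)$ identical contributions with the type-to-type jump term of $\mathcal{Q}_{N}$. The only (harmless) difference is that you spell out the permutation-invariance of $\mathcal{D}_{N}\xi$ explicitly, where the paper merely asserts it follows from linearity and the symmetry of $\xi$.
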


\begin{proof}
	The symmetry of $\mathcal{D}_{N} \, \xi$ is a consequence of the symmetry of $\xi$ and the linearity of $\mathcal{D}_{N}$.

	For $\eta \in \mathcal{E}_{K,N}$ let us define $(k_1, k_2, \dots, k_N) = \psi_{K,N}(\eta)$, i.e.\ $k_i$ is the position on $\left[ K \right]$ of the $i$-th particle according to the definition of $\psi_{K,N}$. We have
	\begin{align*}
		\big( \mathcal{D}_{N} \, {\xi} \circ \psi_{K,N} \big) (\eta)  &= \sum_{i = 1}^N \sum_{k = 1}^K \mu_{k_i, k}  \Big[ \xi(k_1, \dots, k_{i-1}, k, k_{i+1}, \dots, k_N) - \xi \big( \psi_{K,N} (\eta) \big) \Big]\\
			&= \sum_{k = 1}^K \sum_{r = 1}^K \sum\limits_{i: \, k_i = r} \mu_{k_i,k} \Big[ \xi\big(k_1, \dots, k_{i-1}, k, k_{i+1}, \dots, k_N) - \xi(\psi_{K,N} (\eta)\big) \Big].
	\end{align*}
	Using the symmetry of $\xi$, for all $\eta$ such that $\psi_{K,N}(\eta)(i) = r$ we obtain
	\[
		\xi(k_1, \dots, k_{i-1}, k, k_{i+1}, \dots, k_N) - \xi(\psi_{K,N} (\eta)) = \xi(\psi_{K,N} (\eta - \mathbf{e}_r + \mathbf{e}_k)) - \xi(\psi_{K,N} (\eta)).
	\]
	Thus,
	\begin{align*}
		\big(\mathcal{D}_{N} \, {\xi} \circ \psi_{K,N} \big) (\eta)  &= \sum_{k = 1}^K \sum_{r = 1}^K \sum\limits_{i: \, k_i = r} \mu_{k_i,k} \Big[ \xi\big (\psi_{K,N} (\eta - \mathbf{e}_r + \mathbf{e}_k) \big) - \xi\big(\psi_{K,N} (\eta)\big) \Big] \\
			&= \sum_{k = 1}^K \sum_{r = 1}^K \eta(r) \mu_{r,k} \Big[ \xi(\psi_{K,N} \big( \eta - \mathbf{e}_r + \mathbf{e}_k ) \big) - \xi\big( \psi_{K,N} (\eta) \big) \Big]\\
			&= \big( \mathcal{Q}_{N} \, \xi \circ \psi_{K,N} \big) (\eta),
	\end{align*}
	for every $\eta \in \mathcal{E}_{K,N}$.
\end{proof}

The following lemma describes all the eigenvalues of $\mathcal{Q}_{N}$, defined by (\ref{def:generatorQKN}), in the case where the mutation matrix is diagonalisable.

\begin{lemma}[Eigenvalues of $\mathcal{Q}_{N}$ for diagonalisable $Q$] \label{lemma:eigenvaluesQKN}
Assume $Q$ is diagonalisable and $$\mathcal{U} = \{U_0, U_1, \dots, U_{K-1}\}$$ is the basis of $\mathbb{R}^K$ formed by right eigenvectors of $Q$, such that $U_0$ is the all-one vector. Consider $\tilde{V}_{\eta}$ and $\lambda_\eta$ defined as in (\ref{def:Veta_tilde}) and (\ref{eq:lameta}), respectively. Then
\begin{itemize}
\item[(a)]
$\lambda_\eta$ is an eigenvalue of $\mathcal{Q}_{N}$ with right eigenvector $\tilde{V}_\eta$.
\item[(b)]
The spectrum of $\mathcal{Q}_{N}$ is formed by $0$ and all $\lambda_\eta$ for
$\eta \in \bigcup\limits_{L=1}^{N} \mathcal{E}_{K-1,L}$. 
\item[(c)] $\mathcal{Q}_{N}$ is diagonalisable.
\end{itemize}
\end{lemma}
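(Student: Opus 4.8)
The plan is to establish part~(a) by working first in the distinguishable picture and then transferring to $\mathcal{Q}_{N}$ via Lemma~\ref{lemma:relation_generator}, and afterwards to deduce parts~(b) and~(c) from the basis furnished by Proposition~\ref{propo:basis}. To begin, I would show that $U_{\eta}$, defined in \eqref{def:Ueta}, is a right eigenvector of $\mathcal{D}_{N}$ for the eigenvalue $\lambda_{\eta}$. Writing $(k_1,\dots,k_L)=\psi_{K-1,L}(\eta)$, the vector $U_{\eta}$ is the tensor product $U_{k_1}\otimes\dots\otimes U_{k_L}$ followed by $N-L$ copies of $U_0$. Applying the tensorised action \eqref{eq:Ltens} and using $Q\,U_{k_n}=\lambda_{k_n}U_{k_n}$ for $1\le n\le L$ together with $Q\,U_0=0$, every summand in which a copy of $U_0$ is differentiated vanishes, so that $\mathcal{D}_{N}U_{\eta}=\big(\sum_{n=1}^{L}\lambda_{k_n}\big)U_{\eta}$. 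Since $\eta(k)$ counts the indices $n$ with $k_n=k$, the scalar equals $\sum_{k=1}^{K-1}\eta(k)\lambda_k=\lambda_{\eta}$, matching \eqref{eq:lameta}.

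Next I would promote this to the symmetrised vector $V_{\eta}=\operatorname{Sym}(U_{\eta})$ from \eqref{def:Veta}. For any $\sigma\in\mathcal{S}_N$ the permuted tensor $\sigma U_{\eta}$ is again a tensor product of the same eigenvectors of $Q$ in a different order; by the same computation it is an eigenvector of $\mathcal{D}_{N}$ for the \emph{same} eigenvalue $\lambda_{\eta}$, because reordering the factors does not alter the multiset of eigenvalues whose sum produces $\lambda_{\eta}$. Averaging over $\mathcal{S}_N$ then shows that $V_{\eta}$ is a symmetric function with $\mathcal{D}_{N}V_{\eta}=\lambda_{\eta}V_{\eta}$. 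Since $\tilde{V}_{\eta}=\Phi_{K,N}(V_{\eta})$ by \eqref{def:Veta_tilde}, Lemma~\ref{lemma:relation_generator} yields
\[
\mathcal{Q}_{N}\tilde{V}_{\eta}=\mathcal{Q}_{N}\,\Phi_{K,N}(V_{\eta})=\Phi_{K,N}\big(\mathcal{D}_{N}V_{\eta}\big)=\lambda_{\eta}\,\tilde{V}_{\eta}.
\]
As $\tilde{V}_{\eta}$ is a member of the basis of Proposition~\ref{propo:basis} it is nonzero, so this proves~(a).

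Finally, for (b) and (c) I would invoke Proposition~\ref{propo:basis}-(c): the family $\tilde{\mathcal{S}}^{N}$, consisting of the constant function $\Phi_{K,N}(U_0\otimes\dots\otimes U_0)$ together with all $\tilde{V}_{\eta}$ for $\eta\in\bigcup_{L=1}^{N}\mathcal{E}_{K-1,L}$, is a basis of $\mathbb{R}^{\mathcal{E}_{K,N}}$. By part~(a) every $\tilde{V}_{\eta}$ is an eigenvector of $\mathcal{Q}_{N}$ with eigenvalue $\lambda_{\eta}$, while the constant is an eigenvector with eigenvalue $0$. Hence $\mathcal{Q}_{N}$ possesses a basis of eigenvectors, which gives diagonalisability~(c) and shows that its spectrum, counted with (algebraic, equal to geometric) multiplicity, is exactly $\{0\}\cup\{\lambda_{\eta}:\eta\in\bigcup_{L=1}^{N}\mathcal{E}_{K-1,L}\}$, proving~(b). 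I do not expect a serious obstacle, as the statement is essentially an assembly of the preparatory lemmas; the only delicate points are the eigenvalue bookkeeping $\sum_{n=1}^{L}\lambda_{k_n}=\lambda_{\eta}$ and the reading-off of multiplicities, both settled by diagonalisability and the explicit basis. The genuinely load-bearing input is the linear independence of the $\tilde{V}_{\eta}$ guaranteed by Proposition~\ref{propo:basis}.
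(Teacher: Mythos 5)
Your proposal is correct and follows essentially the same route as the paper: establish $\mathcal{D}_{N}U_{\eta}=\lambda_{\eta}U_{\eta}$ from the tensorised action \eqref{eq:Ltens}, pass to $V_{\eta}$ by symmetrisation, transfer to $\mathcal{Q}_{N}$ via Lemma~\ref{lemma:relation_generator}, and conclude (b) and (c) from the basis $\tilde{\mathcal{S}}^{N}$ of Proposition~\ref{propo:basis}-(c). No gaps.
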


\begin{proof}
(a) For $\eta \in \mathcal{E}_{K-1,L}$ let us denote $U_{\eta}$ as in (\ref{def:Ueta}). Because $Q \, U_0 = 0$ and $Q \, U_k = \lambda_k \, U_k, \; 1 \leq k \leq K-1$, from (\ref{eq:Ltens}), we get
\(
\mathcal{D}_{N} ( U_{\eta} ) = \lambda_{\eta} U_{\eta}.
\)
More generally, for every permutation $\sigma \in \mathcal{S}_N$,
\(
\mathcal{D}_{N} \left(\sigma U_{\eta} \right) = \lambda_{\eta} (\sigma U_{\eta}),
\)
and thus, using the linearity of $\mathcal{D}_{N}$ we get
\begin{equation*} \label{eq:LKN1}
\mathcal{D}_{N} \, V_{\eta}  = \lambda_{\eta}
 V_{\eta},
\end{equation*}
where $V_{\eta}$ is defined as in (\ref{def:Veta}).
Applying $\psi_{K,N}$ to both members of the previous equality we obtain
\(
	(\mathcal{D}_{N} V_{\eta}) \circ \psi_{K,N} = \lambda_{\eta} V_{\eta} \circ \psi_{K,N}.
\)
Now, using Lemma (\ref{lemma:relation_generator}), and the expressions (\ref{def:Veta}) and (\ref{def:Veta_tilde}), definitions of
$V_{\eta}$ and $\tilde{V}_{\eta}$, respectively, we obtain
\(
\mathcal{Q}_{N} \tilde{V}_{\eta} = \lambda_\eta \tilde{V}_{\eta},
\)
which proves (a).

(b)-(c) Because $\mathcal{U}$ is a basis of $\mathbb{R}^K$,
the set $\tilde{\mathcal{S}}^N$ as defined in (\ref{def:basisEKN}) is a basis
of $\mathbb{R}^{\mathcal{E}_{K,N}}$, due to Proposition \ref{propo:basis}-(c). Therefore, all the eigenvalues of $\mathcal{Q}_{N}$ are those described in part (b) and $\mathcal{Q}_{N}$ is diagonalisable.
\end{proof}

\begin{remark}\label{remark:QNonRateMatrix}
	Note that the results in Lemma \ref{lemma:eigenvaluesQKN} remains valid for all operator $\mathcal{Q}_N$ defined using a diagonalisable matrix $Q$, not necessarily a rate matrix, with complex entries and such that $Q U_0 = \mathbf{0}$ and $\lambda_0 = 0$ has algebraic multiplicity equal to one.
\end{remark}

Lemma \ref{lemma:eigenvaluesQKN} provides all the eigenvalues and right eigenvectors of $\mathcal{Q}_{N}$ when $Q$ is diagonalisable. 
However, an ergodic rate matrix is not necessarily diagonalisable. %, as next example shows.
%
% \begin{example}[Non-diagonalisable rate matrix of an ergodic Markov chain]\label{example:QnonDiagonalisable}
% Consider the infinitesimal rate matrix $Q$ given by
% $$
% Q = \left(
% 		\begin{array}{rrr}
% 			-9 & 7 & 2 \\
% 			1 & -7 & 6 \\
% 			5 & 7 & -12
% 		\end{array}
% 	\right)
% 		=
% 	W
% 	\left(
% 		\begin{array}{rrr}
% 			0 &   0 & 0 \\
% 			0 & -14 & 1 \\
% 			0 &   0 & -14
% 		\end{array}
% 	\right)
% 	W^{-1},
% $$
% where
% \[
% 	W = \left(
% 		\begin{array}{rrr}
% 			3/14 &  2 & 11/14 \\
% 			3/14 & -2 & -3/14 \\
% 			3/14 &  2 & -3/14
% 		\end{array}
% 	\right),
% 	\;\;
% 	W^{-1} = \left(
% 		\begin{array}{rrr}
% 			1 &  7/3 & 4/3 \\
% 			0 & -1/4 & 1/4 \\
% 			1 &    0 &  -1
% 		\end{array}
% 	\right).
% \]
% Then, $Q$ is a non-diagonalisable rate matrix generating an ergodic Markov chain.
% Note that the unique stationary distribution of the process driven by $Q$ is $(3/14, 1/2, 2/7)$.
% \end{example}
Thereby, we want to extend the results in Lemma \ref{lemma:eigenvaluesQKN} to the case where the matrix $Q$ is non-diagonalisable, as stated in Theorem \ref{thm:eigenvalues_generalisation}.
Let us first recall two known facts in the theory of real matrices. 
We denote by $M_n(\mathbb{R})$ and $M_n(\mathbb{C})$ the vector space of $n$-dimensional real and complex matrices, respectively. For a matrix $M \in M_n(\mathbb{C})$ we denote by $\operatorname{Spec}(M) \in \mathbb{C}^n$ its spectrum counting the algebraic multiplicities of the eigenvalues.
It is known that the set of diagonalisable complex matrices is dense in $M_n(\mathbb{C})$. Serre \cite[Cor.\ 5.1]{MR2744852}, for instance, proves this result as a consequence of Schur's Theorem \cite[Thm.\ 5.1]{MR2744852}. 
Using the same reasoning we can prove the following: 
\begin{description}
	\item[Fact 1] The set of diagonalisable complex matrices with each row summing to zero is dense in the set of the irreducible rate matrices: for every rate matrix $Q \in M_n(\mathbb{R})$ and $\epsilon > 0$ there exists a diagonalisable matrix $\bar{Q} \in M_n(\mathbb{C})$ such that $\|Q - \bar{Q}\| < \epsilon$. 
	Moreover, $\bar{Q}$ can be chosen such that $0 \in \operatorname{Spec}(\bar{Q})$, with $0$ having geometric multiplicity $1$ and $\bar{Q} \, U_0 = \mathbf{0}$, where $\mathbf{0}$ denotes the $K$ dimensional null column vector, i.e.\ each row of $\bar{Q}$ sums to zero.
\end{description}

The idea of the proof of Fact 1 is to modify diagonal elements in the upper-triangular matrix obtained by Schur's Theorem \cite[Thm.\ 5.1]{MR2744852} to get a matrix with $n$ different eigenvalues, and thus diagonalisable. 
Indeed, since $Q$ is an irreducible rate matrix, the eigenspace associated to the eigenvalue $\lambda_0 = 0$ has dimension one and it is generated by $U_0$. 
Moreover, the other $n-1$ complex eigenvalues have strictly negative real parts. 
Thus, it is possible to modify the diagonal of the upper triangular matrix obtained by the Schur's Theorem in such a way that the eigenvalues of the modified matrix, denoted $\bar{Q}$, are zero and $n-1$ complex numbers with different and strictly negative real parts.
Furthermore, because of the Schur's factorisation, $U_0$ is also an eigenvector of $\bar{Q}$ associated to the null eigenvalue, i.e.\ $\bar{Q} U_0 = \mathbf{0}$.

Note that, since $M_n(\mathbb{C})$ is a finite dimensional vector space, the result in Fact 1 holds for every norm defined on $M_n(\mathbb{C})$. In the sequel we will use the uniform norm, denoted $\| \cdot \|_{\text{Unif}}$, and defined as follows 
\[
	\| A \|_{\text{Unif}} := \max\limits_{i,j} |a_{i,j}|,
\]
for every matrix $A = (a_{i,j})_{i,j} \in M_n(\mathbb{C})$.

The second fact is related to the continuity of the eigenvalues of a matrix with respect to its entries.  Consider the following distance between two sets of $n$ elements in $\mathbb{C}$:
\begin{equation*} % \label{def:distanceCn}
	\operatorname{D}\left( \{z_i\}_{i = 1}^n, \{\omega_i\}_{i = 1}^n \right) := \inf\limits_{\sigma \in \mathcal{S}_n} \max_{j} |z_j - \omega_{\sigma(j)}|,
\end{equation*}
where $\mathcal{S}_{n}$ denotes de symmetric group on $[n]$, for every $n \in \mathbb{N}$.

\begin{description}
	\item[Fact 2] The eigenvalues are continuous with respect to the entries of the matrix in the following sense: consider $M \in M_n(\mathbb{C})$, then for all $\epsilon > 0$ there exists a $\delta > 0$ such that for every matrix $N \in M_n(\mathbb{C})$ such that $\|M - N\| < \delta$, then $\operatorname{D} \left( \operatorname{Spec}(M), \operatorname{Spec}(N)  \right) < \epsilon$.
\end{description}
See e.g.\ \cite{MR884486} and \cite[Thm.\ 5.2]{MR2744852} for a proof of Fact 2.

\begin{proof}[Proof of Theorem \ref{thm:eigenvalues_generalisation}]
From Lemma \ref{lemma:eigenvaluesQKN} we know that the statement of Theorem \ref{thm:eigenvalues_generalisation} holds for a diagonalisable rate matrix $Q$. Let us prove it in the general case using the Facts 1 and 2 we previously discussed.

For a mutation rate matrix $Q \in M_K(\mathbb{R})$ with spectrum $\operatorname{Spec}(Q) = \{0, \lambda_1, \dots, \lambda_{K-1}\}$, let us define by $\sigma_{N}(Q)$ the set formed by $0$ and $\lambda_{\eta}$, for $\eta \in \bigcup_{L = 1}^{K-1} \mathcal{E}_{K-1,L}$, where the values $\lambda_k$ in the definition (\ref{eq:lameta}) of $\lambda_{\eta}$ are those in $\operatorname{Spec}(Q)$. Then, proving Theorem \ref{thm:eigenvalues_generalisation}-(a) is equivalent to prove that $\sigma_{N}(Q)$ is the spectrum of $\mathcal{Q}_{N}$, i.e.\ $\operatorname{D} \left( \operatorname{Spec}(\mathcal{Q}_{N}), \sigma_{N} (Q) \right) = 0$. 

For a matrix $\bar{Q} \in M_K(\mathbb{C})$ whose rows sum to zero (not necessarily a rate matrix), let us define $ \bar{ \mathcal{ Q} }_{N}$ similarly to the definition of $\mathcal{Q}_{N}$ (\ref{def:generatorQKN}), but with $\bar{Q}$ as mutation matrix instead of $Q$.
As we commented in Remark \ref{remark:QNonRateMatrix}, Lemma \ref{lemma:eigenvaluesQKN} remains valid and it ensures us that $\operatorname{Spec}(\bar{  \mathcal{Q}  }_{N}) = \sigma_{N}(\bar{Q})$. Thus, using the triangular inequality we get
\[
	\operatorname{D} \left( \operatorname{Spec}(\mathcal{Q}_{N}), \sigma_{N} (Q) \right) \le \operatorname{D} \left( \operatorname{Spec}(\mathcal{Q}_{N}), \operatorname{Spec}(\bar{\mathcal{Q}}_{N}) \right) + \operatorname{D} \left( \operatorname{Spec}(\bar{\mathcal{Q}}_{N}), \sigma_{N} (Q) \right).
\]
Moreover,
\begin{align*}
	\| \mathcal{Q}_{N} - \bar{\mathcal{Q}}_{N} \|_{\text{Unif}} &\le N \|Q - \bar{Q}\|_{\text{Unif}},\\
	\operatorname{D} \left( \operatorname{Spec}(\bar{\mathcal{Q}}_{N}), \sigma_{N} (Q) \right) &\le N \operatorname{D} \left( \operatorname{Spec}(\bar{Q}), \operatorname{Spec}( Q ) \right).
\end{align*}
Fix $\epsilon > 0$. Using Fact 2, we know there exist $\delta_1, \delta_2 > 0$ such that
\begin{eqnarray*}
	\operatorname{D} \left( \operatorname{Spec}(\mathcal{Q}_{N}), \operatorname{Spec}(\bar{\mathcal{Q}}_{N}) \right) \le \frac{\epsilon}{2} &\text{ if }& \|\mathcal{Q}_{N} - \bar{ \mathcal{Q} }_{N} \|_{\text{Unif}} < \delta_1, \\
	\operatorname{D} \left( \operatorname{Spec}(\bar{Q}), \operatorname{Spec}( Q ) \right) \le \frac{\epsilon}{2 N} &\text{ if }& \|Q - \bar{Q} \|_{\text{Unif}} < \delta_2.
\end{eqnarray*}
Thus,
\[
	\operatorname{D} \left( \operatorname{Spec}(\mathcal{Q}_{N}), \sigma_{N} (Q) \right) \le \frac{\epsilon}{2} + N \operatorname{D}(\operatorname{Spec}( \bar{Q} ), \operatorname{Spec}(Q)) < \epsilon,
\]
whenever $\|Q - \bar{Q} \|_{\text{Unif}} < \min\{ \delta_1 / N, \delta_2 \}$. Since $\epsilon$ can be taken arbitrary small, by Fact 1, the proof of (a) is finished.

The proof of $(b)$ is exactly the same as the proof of (a) in Lemma \ref{lemma:eigenvaluesQKN}. Note that, since $\eta(r) = \dots = \eta(K-1) = 0$, the definition of $\tilde{V}_{\eta}$ only depends on the $r$ linearly independent vectors forming $\mathcal{U}$. Finally, the result in (c) trivially comes from Lemma \ref{lemma:eigenvaluesQKN}.
\end{proof}

\begin{remark}[Alternative proof for Theorem \ref{thm:eigenvalues_generalisation}]
	The \emph{Jordan--Chevalley decomposition} is an elegant tool to find the eigenvalues of $\mathcal{Q}_{N}$ and prove Theorem \ref{thm:eigenvalues_generalisation}. 
	The Jordan-Chevalley decomposition ensures the existence of two matrices $Q_{\mathrm{Diag}}$ and $Q_{\mathrm{Nil}}$ such that 
		$Q = Q_{\mathrm{Diag}} + Q_{\mathrm{Nil}}$.
	Moreover, $Q_{\mathrm{Diag}}$ is diagonalisable, $Q_{\mathrm{Nil}}$ is nilpotent, they commute and such a decomposition is unique. 
	See \cite[Prop.\ 3.20]{MR2744852} and \cite{MR2850401} for more details about the Jordan-Chevalley decomposition. Then, it can be proved that the Jordan-Chevalley decomposition of $\mathcal{Q}_{N}$ is $\mathcal{Q}_{N} = (\mathcal{Q}_{\mathrm{Diag}})_{N} + (\mathcal{Q}_{\mathrm{Nil}})_{N}$, where $(\mathcal{Q}_{\mathrm{Diag}})_{N}$ and $(\mathcal{Q}_{\mathrm{Nil}})_{N}$ are defined similarly to $\mathcal{Q}_{N}$ in (\ref{def:generatorQKN}), substituting $Q$ by $Q_{\mathrm{Diag}}$ and $Q_{\mathrm{Nil}}$, respectively. 
	Now, since the spectrum of $\mathcal{Q}_{N}$ is that of $(\mathcal{Q}_{\mathrm{Diag}})_{N}$, the proof of Theorem \ref{thm:eigenvalues_generalisation} follows from Lemma \ref{lemma:eigenvaluesQKN}.
\end{remark}

\subsection{Proof of Theorem \ref{thm:specA}} \label{subsec:reproduction_process}

In this section, given $K \geq 2$ and $N\geq 2$,
we consider the continuous-time Markov chain of $N$ indistinguishable particles on $K$ sites, with state space $\mathcal{E}_{K,N}$, where, with rate $1$, any particle jumps to one of the positions of another particle chosen at random.
We denote by $\mathcal{A}_{N}$ the infinitesimal generator of this reproduction process, which is defined in (\ref{def:reproduction}) as
\begin{equation*}
	\big( \mathcal{A}_{N} f \big) (\eta) = \sum_{i,j \in \left[ K \right]} \eta(i) \eta(j) \big[ f(\eta - \mathbf{e}_i + \mathbf{e}_j) - f(\eta) \big]
\end{equation*}
for every real function $f$ and all $\eta \in\mathcal{E}_{K,N}$.

\begin{remark}[First degree eigenfunctions of $\mathcal{A}_{N}$]\label{remark:1and2degree_eigenAKN}
	Note that the states $ \{N \, \mathbf{e}_k\}_{k=1}^K \subset \mathcal{E}_{K,N}$ are the only absorbing states for the interaction process generated by $\mathcal{A}_{N}$. 
	Thus, the distribution concentrated at $N \, \mathbf{e}_k$, denoted $\delta_{\{N \mathbf{e}_k\}}$, is stationary for $\mathcal{A}_{N}$, for $k \in \left[ K \right]$. 
	It is not difficult to check that the real functions on $\mathcal{E}_{K,N}$, $x_0 \equiv 1$ and $x_k: \eta \mapsto \eta_k$, for $k \in \left[ K-1 \right]$, are linearly independent vectors of $\mathbb{R}^{\mathcal{E}_{K,N}}$ and  they satisfy $\mathcal{A}_{N} x_k = 0$, for all $k = 0,1, \dots, K-1$.
	Thus, the right eigenspace associated to $0$ is the space of homogeneous polynomials of degree $1$, which has dimension $K$.  
\end{remark}

Actually, it can be proved that the generator $\mathcal{A}_{N}$ preserves the total degree of a polynomial, in the sense that the image of a polynomial is another polynomial of the same total degree.
To prove Theorem \ref{thm:specA} we first formally describe the preserving degree polynomial property satisfied by $\mathcal{A}_{N}$.

\begin{lemma}[$\mathcal{A}_{N}$ preserves polynomial total degree] \label{lemma:specA}
Assume $K \geq 2$ and $N \geq 2$. 
Let $P$ be a polynomial on $\mathcal{E}_{K,N}$
of total degree $L$ with $1 \leq L \leq N$.
Then,
\begin{equation*}
\mathcal{A}_{N} V_P = - L(L-1) V_P + V_R,
\end{equation*}
where $R$ is a polynomial with a total degree strictly less than $L$.
\end{lemma}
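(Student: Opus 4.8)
The plan is to exploit the linearity of $\mathcal{A}_{N}$ together with the observation that $\mathcal{A}_{N}$ acts very simply on \emph{products of linear functions}. Every homogeneous polynomial of degree $L$ is a linear combination of products $\ell_1\cdots\ell_L$ of $L$ linear forms $\ell_r(\eta)=\sum_{k=1}^K a_{r,k}\,\eta(k)$, since each monomial $\mathbf{x}^{\alpha}$ with $|\alpha|=L$ is exactly such a product of coordinate functions $x_k$. So it suffices to prove the identity for $P=\ell_1\cdots\ell_L$, and to check separately that $\mathcal{A}_{N}$ never raises the total degree. Indeed, writing a general $P$ of total degree $L$ as $P=P_L+R_0$ with $P_L$ its homogeneous leading part and $R_0$ of total degree $<L$, the conclusion $\mathcal{A}_{N}V_P=-L(L-1)V_P+V_R$ follows from the case of products applied to $P_L$ and from the degree-non-increasing property applied to $R_0$.

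First I would record how a linear form transforms under an elementary jump: $\ell_r(\eta-\mathbf{e}_i+\mathbf{e}_j)=\ell_r(\eta)+\delta_r^{ij}$, where $\delta_r^{ij}:=a_{r,j}-a_{r,i}$ is a \emph{constant}. Expanding the product and summing against the rates gives
\[
\mathcal{A}_{N}\Big(\prod_{r=1}^L\ell_r\Big)=\sum_{\emptyset\neq T\subseteq[L]}\Big(\prod_{r\notin T}\ell_r\Big)\sum_{i,j}\eta(i)\eta(j)\prod_{r\in T}\delta_r^{ij}.
\]
The factor $\prod_{r\notin T}\ell_r$ has degree $L-|T|$ and the inner sum has degree at most $2$, so the term indexed by $T$ has total degree at most $L+2-|T|$; in particular the terms with $|T|\ge 3$ have degree $<L$. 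The potentially degree-raising terms, those with $|T|=1$, vanish identically: swapping $i\leftrightarrow j$ sends $\delta_r^{ij}\mapsto-\delta_r^{ij}$ while fixing $\eta(i)\eta(j)$, so $\sum_{i,j}\eta(i)\eta(j)\,\delta_r^{ij}=0$. This already shows that $\mathcal{A}_{N}$ does not increase the total degree.

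The degree-$L$ part therefore comes only from $|T|=2$. For a pair $\{r,r'\}$ set $C_{r,r'}:=\sum_{i,j}\eta(i)\eta(j)\,\delta_r^{ij}\delta_{r'}^{ij}$. Expanding $(a_{r,j}-a_{r,i})(a_{r',j}-a_{r',i})$ and performing the four double sums, two of them factor as $\ell_r(\eta)\ell_{r'}(\eta)$ while the other two produce $\big(\sum_i\eta(i)\big)\sum_k a_{r,k}a_{r',k}\,\eta(k)$. Here is the essential point: on $\mathcal{E}_{K,N}$ the factor $\sum_i\eta(i)$ equals the constant $N$, so these two terms collapse to the degree-$1$ function $N\sum_k a_{r,k}a_{r',k}\eta(k)$, and hence
\[
C_{r,r'}=-2\,\ell_r\ell_{r'}+2N\sum_{k=1}^K a_{r,k}a_{r',k}\,\eta(k),
\]
whose leading part is $-2\,\ell_r\ell_{r'}$. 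Summing over the $\binom{L}{2}$ pairs and using $\big(\prod_{s\neq r,r'}\ell_s\big)\ell_r\ell_{r'}=\prod_{s=1}^L\ell_s$, the degree-$L$ part of $\mathcal{A}_{N}(\prod_r\ell_r)$ equals $-2\binom{L}{2}\prod_s\ell_s=-L(L-1)\prod_s\ell_s$, with all remaining terms of total degree $<L$. This is exactly the claimed identity for a product of linear forms; for $L=1$ there are no pairs and it reduces to $\mathcal{A}_{N}$ annihilating linear functions, as already noted in Remark \ref{remark:1and2degree_eigenAKN}.

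The main obstacle is the degree bookkeeping: a priori the rate $\eta(i)\eta(j)$ could raise the total degree by two, and it is precisely the antisymmetry killing the $|T|=1$ terms that prevents any increase, while the collapse $\sum_i\eta(i)=N$ is what turns the apparently quadratic part of $C_{r,r'}$ into a mere linear correction. Both facts are special to functions on the simplex $\mathcal{E}_{K,N}$, and together they are exactly what make the eigenvalue $-L(L-1)$ emerge from the leading part.
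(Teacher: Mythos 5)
Your proposal is correct, and it takes a genuinely different route from the paper's proof. The paper works monomial by monomial: it writes $m(\eta)=\prod_r\eta(r)^{\alpha_r}$, expands $\eta(k)(\eta(k)-1)^{\alpha_k}$ and $\eta(r)(\eta(r)+1)^{\alpha_r}$ by Newton's binomial formula, cancels the top-degree terms by antisymmetry in $(k,r)$, and then obtains the eigenvalue as a sum of two constants, $c_1=L^2-\sum_k\alpha_k^2$ from the cross terms and $c_2=\sum_k\alpha_k^2-L$ from the second-derivative terms (the latter after invoking $\sum_r\eta(r)=N$), which add up to $L(L-1)$. You instead factor every monomial as a product of $L$ linear forms and use the exact finite-difference expansion $\prod_r(\ell_r+\delta_r^{ij})-\prod_r\ell_r=\sum_{\emptyset\neq T}(\prod_{r\notin T}\ell_r)\prod_{r\in T}\delta_r^{ij}$, so the bookkeeping is organized by the size of $T$ rather than by powers of a single variable: $|T|=1$ dies by the same antisymmetry, $|T|\ge 3$ is automatically of lower degree, and the eigenvalue appears as $-2$ per unordered pair of factors, i.e.\ $-2\binom{L}{2}=-L(L-1)$, which is arguably the more transparent explanation of where $L(L-1)$ comes from. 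Both arguments rely on the same two structural facts — the antisymmetry of $\delta_r^{ij}$ under $i\leftrightarrow j$, and the collapse $\sum_i\eta(i)=N$ on the simplex — and your reduction to products of linear forms is legitimate since each monomial $\mathbf{x}^\alpha$ is such a product and $\mathcal{A}_N$ is linear. Your version also meshes nicely with the symmetrized tensor functions $\tilde{\xi}(V_1,\dots,V_L)$ of Lemma \ref{thm:chiV}, whereas the paper's computation stays entirely in monomial coordinates. The only point worth stating a bit more explicitly is the final assembly for a general $P$ of total degree $L$: writing $P=P_L+R_0$ and absorbing $L(L-1)R_0$ into the remainder, exactly as you sketch in your opening paragraph.
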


The proof of Lemma \ref{lemma:specA} is technical and it is deferred to Appendix \ref{section:proof_lemma_specA}.  We next prove Theorem \ref{thm:specA}.

\begin{proof}[Proof of Theorem \ref{thm:specA}]
(a) For $K \geq 2$ and $N \geq 2$, let us define the sets $\mathcal{B}_L$ of monomials 
in $\mathcal{E}_{K,N}$ as follows
\[
\mathcal{B}_0 := \{1\},  \;\;
\mathcal{B}_1 := \{x_1,x_2,\dots,x_{K-1} \}, \; \;
\mathcal{B}_L := \{\mathbf{x}^{\alpha}, \; \alpha \in \mathcal{E}_{K-1,L}\},\;\; 
\]
for $2 \leq L \leq N$, where $\mathbf{x}^{\alpha} = x_1^{\alpha_1} x_2^{\alpha_2}\dots x_{K-1}^{\alpha_{K-1}}$ for
$\alpha := (\alpha_1,\alpha_2,\dots,\alpha_{K-1})$.
Then, consider the ordered set
\begin{equation*}
\mathcal{B} = \mathcal{B}_0 \cup \mathcal{B}_1 \cup \dots \cup \mathcal{B}_N.
\end{equation*}
The set $\mathcal{B}$ is a basis of the space of real functions on $\mathcal{E}_{K,N}$, due to Lemma \ref{thm:interp}-(b).
The matrix similar to $\mathcal{A}_{N}$ with respect to this basis
is $\bar{\mathcal{A}}_{N} = W^{-1} \mathcal{A}_{N} W$, where $W$
is the matrix with $P$, with $P \in \mathcal{B}$, as column vectors.
Thanks to the result in Lemma \ref{lemma:specA}-(a), $\bar{\mathcal{A}}_{N}$ is a block upper triangular matrix, where 
the first diagonal block has size $K$ and is a null matrix. 
The other diagonal blocks have size $\mbox{Card}(\mathcal{E}_{K-1,L}) =
\binom{  K  - 2 + L }{ L }$ and are diagonal matrices
with constant diagonal elements equal to $-L(L-1)$, with $2 \leq L \leq N$.
This analysis gives us the eigenvalues of $\mathcal{A}_{N}$
are $0$ with algebraic multiplicity $K$ and $-L(L-1)$ with algebraic multiplicity $\binom{  K  - 2 + L }{ L }$ for $2 \leq L \leq N$.

Now, using the block multiplication of matrices, it is not difficult to see that $(\bar{\mathcal{A}}_{N})^n$ is also a block diagonal matrix, where the $L$-th block is a diagonal matrix of dimension $\binom{K  - 2 + L}{L}$ with all the entries on the diagonal equal to $(-L(L-1))^n$, for $2 \le L \le N$.
Thus, for every real polynomial $\Upsilon$ the matrix $\Upsilon(\bar{\mathcal{A}}_{N}) = W^{-1} \Upsilon(\mathcal{A}_{N}) W$ is a block diagonal matrix with diagonal elements $\Upsilon(-L(L-1))$. Taking
\[
 	\Upsilon: s \mapsto s\prod_{L=2}^N [s + L (L-1)],
\]
we get $\Upsilon(\bar{\mathcal{A}}_{N}) = \mathbf{0}_{K,N}$, where $\mathbf{0}_{K,N}$ is the $\binom{K - 1 + N }{N}$ dimensional null matrix. Thus, $\Upsilon(\mathcal{A}_{N}) = \mathbf{0}_{K,N}$ and $\Upsilon$ is necessarily the \emph{minimal polynomial} of $\mathcal{A}_{N}$, which factors into distinct linear factors. We thus conclude that $\mathcal{A}_{N}$ is diagonalisable.
\end{proof}

\begin{remark}[On the right eigenfunctions of $\mathcal{A}_{N}$]%\label{remark:first_eigenspaces}
	Theorem \ref{thm:specA} does not provide a characterisation of the eigenspace associated to the eigenvalue $-L(L-1)$, for $L \in \left[ N \right]$. For the special case $K=2$, Watterson \cite{watterson_markov_1961} does provide such a decomposition for the discrete analogue of $\mathcal{A}_{N}$ in terms of cumulative sums of discrete Chebyshev polynomials. In addition, Zhou  \cite[\S 4.2.2]{zhou_examples_2008} provides an equivalent but simpler expression for the eigenvectors of the equivalent analogous of $\mathcal{A}_{N}$, for $K = 2$, in terms of univariate Hahn polynomials. 
	In the general case ($K \ge 3$), it is possible to describe the eigenspaces associated to the first three eigenvalues of $\mathcal{A}_{N}$. As we commented in Remark \ref{remark:1and2degree_eigenAKN}, the right eigenspace associated to $0$ is the space of homogeneous polynomials of first degree. 
	Moreover, the right eigenspace associated to $-2$ has dimension $K(K-1)/2$ and it is generated by the set of monomials $\{ x_k x_r, \; 1 \leq k < r \leq K\}$.
	Additionally, for $L=3$, it is possible to prove that a simple basis of the right eigenspace associated to $-6$ has dimension $K(K+1)(K-1)/6$ and is given by eigenvectors
	$\{ x_k^2 x_r - x_k x_r^2, \; 1 \leq k < r \leq K\} \cup
	\{x_k x_r x_s,\; 1 \leq k < r < s \leq K\}$.
	The complete characterisation of the eigenvectors of $\mathcal{A}_{N}$, for $K \ge 3$, is a topic of further research.
\end{remark}

\subsection{Proof of Theorem \ref{thm:mainspec_intro}} \label{subsec:walkfv}

This section is devoted to the proof of Theorem \ref{thm:mainspec_intro} providing a description of the spectrum of the neutral multi-allelic Moran process with generator $\mathcal{Q}_{N,p}$, defined by (\ref{def:generatorQKNp}) as
\begin{equation*}
	\big( \mathcal{Q}_{N,p} f \big) (\eta) = \sum_{i,j \in \left[ K \right]} \eta(i) \left( \mu_{i,j} + \frac{p}{N} \, \eta(j) \right) \big[ f(\eta - \mathbf{e}_i + \mathbf{e}_j) - f(\eta)\big],
\end{equation*}
for every real function $f$ in $\mathcal{E}_{K,N}$ and every $\eta \in \mathcal{E}_{K,N}$.

Assume $K \ge 2$, $N \ge 2$ and $p \in [0,\infty)$ and suppose that $Q$ is diagonalisable with eigenvalues $0$ and $\lambda_k$, for $k \in \left[ K-1 \right]$. For any $\eta \in \mathcal{E}_{K-1,L}$, with $L \in \left[ N \right]$, let us recall the definition of $\lambda_{\eta,p}$:
	\begin{equation} \label{eq:lametap_section}
		\lambda_{\eta,p} = - L(L-1) \frac{p}{N} + \sum_{k=1}^{K-1} \eta(k) \lambda_k.
	\end{equation}
Then, we will prove that the eigenvalues of $\mathcal{Q}_{N,p}$ are $0$ and all $\lambda_{\eta,p}$ for $\eta \in \bigcup\limits_{L=1}^{N} \mathcal{E}_{K-1,L}$.

\begin{proof}[Proof of Theorem \ref{thm:mainspec_intro}]

Recall that 
\(
\mathcal{Q}_{N,p} = \mathcal{Q}_{N} + \frac{p}{N} \mathcal{A}_{N},
\)
where $\mathcal{Q}_{N}$ and $\mathcal{A}_{N}$ are the generators of the mutation and the reproduction processes defined by (\ref{def:generatorQKN}) and (\ref{def:reproduction}), respectively. 

Let us first prove the result when the mutation rate matrix $Q$ is diagonalisable. 
As proved in Lemma \ref{lemma:eigenvaluesQKN}, the vector $\tilde{V}_{\eta}$ is an eigenvector of $\mathcal{Q}_{N}$
with eigenvalue $\lambda_{\eta}$, for
$\eta \in \bigcup_{L=1}^{N} \mathcal{E}_{K-1,L}$.
Let us denote by $\tilde{V}_0$ the  all-one vector in $\mathbb{R}^{\mathcal{E}_{K,N}}$. Then, the set $\mathcal{B} = \{V_0\} \cup \{\tilde{V}_{\eta}, \; \eta \in \bigcup_{L=1}^{N} \mathcal{E}_{K-1,L} \}$
is a basis of $\mathbb{R}^{\mathcal{E}_{K,N}}$, thanks to Proposition \ref{propo:basis}-(c). 
Let us denote by $W$ the matrix with the elements of $\mathcal{B}$ as columns such that $W^{-1} \mathcal{Q}_{N} W$ is a diagonal matrix with diagonal entries equal
to
$0$ and $\lambda_\eta$, for $\eta \in \bigcup_{L=1}^{N} \mathcal{E}_{K-1,L}$.

For $1 \leq L \leq N$ and $\eta \in \mathcal{E}_{K-1,L}$, the expression (\ref{def:Veta_tilde}) and Lemma \ref{thm:chiV}-(c) ensure that $\tilde{V}_{\eta}$  is a polynomial of total degree equal to $L$.
Using now Theorem \ref{thm:specA}-(b), we get
$$
\mathcal{A}_{N} \tilde{V}_{\eta} = - L(L-1) \tilde{V}_{\eta} + R,
$$
where $R$ is a polynomial of total degree strictly less than $L$.
This fact means that, like in Theorem \ref{thm:specA}-(c), $W^{-1} \mathcal{A}_{N} W $ 
is a block upper triangular matrix, where 
the diagonal blocks of size $\operatorname{Card}(\mathcal{E}_{K-1,L}) =
\binom{  K + L - 2 }{ L }$ are diagonal matrices
with constant diagonal elements
equal to $-L(L-1)$, for $2 \leq L \leq N$.
The first diagonal block of size $K$ is a null matrix.
It follows that
\[
	W^{-1} \mathcal{Q}_{N,p} W  = W^{-1} \mathcal{Q}_{N} W + \frac{p}{N} W^{-1} \mathcal{A}_{N} W
\]
is a block upper triangular matrix, where the first diagonal block has dimension one and is null, i.e.\ the first column is null. Moreover, the $L$-th diagonal block has dimension $\binom{  K + L - 2 }{ L }$ and its diagonal elements are the eigenvalues 
\(
	\lambda_{\eta,p} \text{ with } \eta \in \mathcal{E}_{K-1, L},
\)
for $L \in \left[ N \right]$. Thus, these are the eigenvalues of $\mathcal{Q}_{N,p}$.

Now, consider a general mutation matrix $Q \in M_K(\mathbb{R})$, not necessarily diagonalisable, with spectrum $\operatorname{Spec}(Q) = \{0, \lambda_1, \dots, \lambda_{K-1}\}$. 
Let us define by $\sigma_{N,p}(Q)$ the set formed by $0$ and $\lambda_{\eta,p}$, for $\eta \in \bigcup_{L = 1}^{K-1} \mathcal{E}_{K-1,L}$, where the values $\lambda_k$ in the definition (\ref{eq:lametap_section}) of $\lambda_{\eta, p}$, are those in $\operatorname{Spec}(Q)$.
Define $\bar{\mathcal{Q}}_{N,p}$ similarly to (\ref{def:generatorQKNp}) but with a diagonalisable matrix $\bar{Q} \in M_K(\mathbb{C})$, whose rows have null sum (not necessarily a rate matrix), instead of $Q$.
Then,
\begin{align*}
	\| \mathcal{Q}_{N,p} - \bar{\mathcal{Q}}_{N,p} \|_{\text{Unif}} &= \| \mathcal{Q}_{N} - \bar{\mathcal{Q}}_{N} \|_{\text{Unif}}, \\
	\operatorname{D} \left( \operatorname{Spec}( \bar{ \mathcal{Q} }_{N,p}), \sigma_{N,p}(Q) \right) &= \operatorname{D} \left( \operatorname{Spec}( \bar{ \mathcal{Q} }_{N}), \sigma_{N}(Q) \right).
\end{align*}
Hence, $\sigma_{N,p}(Q)$ is proved to be the spectrum of $\mathcal{Q}_{N,p}$, analogously to the proof of Theorem \ref{thm:eigenvalues_generalisation}-(a). 
\end{proof}

\begin{remark}[Alternative proof of Theorem \ref{thm:mainspec_intro}]
	Another proof of Theorem \ref{thm:mainspec_intro} can be carried out using the Jordan form of the mutation rate matrix $Q$. Indeed, the vectors $\tilde{V}_{\eta} \in \mathbb{R}^{\mathcal{E}_{K,N}}$ can be defined using the basis of $\mathbb{R}^K$ that transforms $Q$ in its normal Jordan form. Then, defining a suitable order among the vectors $\tilde{V}_{\eta}$, for $\eta \in \bigcup_{L = 1}^{N} \mathcal{E}_{K-1,L}$, it is possible to show that $\mathcal{Q}_{N,p}$ is similar to an upper triangular matrix with the values $\lambda_{\eta, p}$ on the diagonal.
\end{remark}

\section{Applications to the convergences to stationarity}\label{section:SLEM&ergodicity}

This section devoted to  some applications of the results in Section \ref{sec:eigenvalues_moran} to the study of the ergodicity of the process driven by $\mathcal{Q}_{N,p}$ in total variation, using spectral properties of $Q$.
In this section we prove Corollary \ref{corol:ergodicity_gral_TV} and Theorem \ref{thm:lower_bound}.
First, let us establish that the Jordan form of $Q$ is a diagonal block in the Jordan form of $\mathcal{Q}_{N,p}$. 
% Axler \cite{MR3308468} is a good reference for generalised eigenvectors and Jordan forms.

\begin{corollary}[Jordan forms of $Q$ and $\mathcal{Q}_{N,p}$] \label{cor:spectrum_QKNp1stdegreePoly}
	Consider $K \ge 2$, $N \ge 2$ and $p \ge 0$.
	If $J$ is the Jordan form of $Q$, then the Jordan normal form of $\mathcal{Q}_{N,p}$ is $J \oplus J'$, where $J'$ is a Jordan matrix of dimension $\binom{K - 1 + N}{N} - K$.
	In particular, $\mathrm{e}^{t Q}$ and $\mathrm{e}^{ t \mathcal{Q}_{N,p} }$ have that same second largest eigenvalue in modulus $(\mathrm{SLEM})$, for every $t \ge 0$.
\end{corollary}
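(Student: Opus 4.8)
The plan is to identify $Q$ with the action of $\mathcal{Q}_{N,p}$ on the $K$-dimensional space $\mathcal{P}_{1}:=\operatorname{span}\{1,x_{1},\dots,x_{K-1}\}=\operatorname{span}\{x_{1},\dots,x_{K}\}$ of polynomial functions of total degree at most one (here $\sum_{k}x_{k}=N$), and then to split $\mathcal{P}_1$ off as a direct summand. First I would check that $\mathcal{P}_1$ is $\mathcal{Q}_{N,p}$-invariant with $\mathcal{Q}_{N,p}|_{\mathcal{P}_1}$ similar to $Q$. By Remark \ref{remark:1and2degree_eigenAKN} we have $\mathcal{A}_{N}|_{\mathcal{P}_1}=0$, so $\mathcal{Q}_{N,p}=\mathcal{Q}_{N}$ on $\mathcal{P}_1$; and a one-line computation from (\ref{def:generatorQKN}) gives $\mathcal{Q}_{N}x_{j}=\sum_{i}\mu_{i,j}\,x_{i}$, whence for the linear form $\tilde{\xi}(V)=\sum_{j}V(j)x_{j}$ of Lemma \ref{thm:chiV}(a) one obtains $\mathcal{Q}_{N}\tilde{\xi}(V)=\tilde{\xi}(QV)$. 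Thus $V\mapsto\tilde{\xi}(V)$ is a linear isomorphism $\mathbb{R}^{K}\to\mathcal{P}_1$ intertwining $Q$ with $\mathcal{Q}_{N,p}|_{\mathcal{P}_1}$, so the restriction has Jordan form exactly $J$, with $U_0$ mapping to the constant $N$ and carrying the simple eigenvalue $0$.

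It then remains to produce a $\mathcal{Q}_{N,p}$-invariant complement $\mathcal{W}$ with $\dim\mathcal{W}=\binom{K-1+N}{N}-K$, for then the Jordan form of $\mathcal{Q}_{N,p}$ is $J\oplus J'$ with $J'$ the Jordan form of $\mathcal{Q}_{N,p}|_{\mathcal{W}}$. The structural tool is that both summands of $\mathcal{Q}_{N,p}=\mathcal{Q}_{N}+\frac{p}{N}\mathcal{A}_{N}$ respect the filtration by total degree: $\mathcal{A}_{N}$ does so by Lemma \ref{lemma:specA}, while $\mathcal{Q}_{N}$ does so because, if the vectors (\ref{def:Ueta})--(\ref{def:Veta_tilde}) are built on a Jordan basis $\{U_{0},\dots,U_{K-1}\}$ of $Q$ (with $QU_0=0$ and the remaining $U_k$ spanning the generalized eigenspaces of the nonzero eigenvalues), then $Q$ maps each nonzero generalized eigenspace into itself, so Lemma \ref{lemma:relation_generator} yields $\mathcal{Q}_{N}\tilde{V}_{\eta}\in\operatorname{span}\{\tilde{V}_{\eta'}:|\eta'|=|\eta|\}$. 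In the basis $\tilde{\mathcal{S}}^{N}$ of Proposition \ref{propo:basis}(c) ordered by $|\eta|$, the operator $\mathcal{Q}_{N,p}$ is therefore block upper triangular with $L$-th diagonal block of spectrum $\{\lambda_{\eta,p}:|\eta|=L\}$, and the $|\eta|\le1$ part is precisely the copy of $Q$ from the first paragraph.

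When $p=0$ this completes the proof at once: $\mathcal{Q}_{N}$ preserves each graded piece exactly, hence is block diagonal in $\tilde{\mathcal{S}}^{N}$, and $\mathcal{W}=\bigoplus_{L\ge2}\operatorname{span}\{\tilde{V}_{\eta}:|\eta|=L\}$ is an invariant complement. For $p>0$ the term $\frac{p}{N}\mathcal{A}_{N}$ contributes genuinely degree-lowering off-diagonal blocks, and this is where the main difficulty lies: an invariant complement to $\mathcal{P}_1$ exists iff the corresponding Sylvester equation is solvable, which is automatic only when $\operatorname{Spec}(Q)$ is disjoint from $\{\lambda_{\eta,p}:|\eta|\ge2\}$. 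These spectra can genuinely collide---a subdominant eigenvalue $\lambda_{j}$ with $\operatorname{Re}\lambda_{j}\le-2\rho$ may coincide with a higher-degree value---so one must show that the degree-one Jordan chains are not lengthened by the coupling, equivalently that the degree-lowering image produced by $\mathcal{A}_{N}$ never lands on a degree-one eigenvector. I would verify this generalized-eigenspace by generalized-eigenspace; the shift $\lambda_{\eta,p}=\lambda_{\eta,0}-\frac{p}{N}|\eta|(|\eta|-1)$ already destroys every collision for all but finitely many $p$, leaving only those exceptional values to be settled by the explicit degree-lowering computation.

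Finally, the $\mathrm{SLEM}$ claim needs only Theorem \ref{thm:mainspec_intro}. The eigenvalue moduli of $\mathrm{e}^{tQ}$ are $\mathrm{e}^{t\operatorname{Re}\lambda}$, the largest being $1$ at $\lambda=0$ and the next $\mathrm{e}^{-\rho t}$ with $\rho=-\max_{k\ge1}\operatorname{Re}\lambda_{k}$. For $\mathcal{Q}_{N,p}$ one has $\operatorname{Re}\lambda_{\eta,p}=\sum_{k}\eta(k)\operatorname{Re}\lambda_{k}-\frac{p}{N}|\eta|(|\eta|-1)\le-\rho$, with equality only if $|\eta|=1$ and $\operatorname{Re}\lambda_{k}=-\rho$, since $|\eta|\ge2$ forces $\operatorname{Re}\lambda_{\eta,p}\le-2\rho<-\rho$. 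Hence $\mathrm{e}^{t\mathcal{Q}_{N,p}}$ has $\mathrm{SLEM}=\mathrm{e}^{-\rho t}$ for every $t\ge0$, equal to that of $\mathrm{e}^{tQ}$; moreover the dominant eigenvalues $\{\lambda_{k}:\operatorname{Re}\lambda_{k}=-\rho\}$ sit at positive spectral distance from all other eigenvalues of $\mathcal{Q}_{N,p}$, so their generalized eigenspace splits off cleanly with $Q$'s Jordan structure intact, which supplies the multiplicity constant $s$ used in Corollary \ref{corol:ergodicity_gral_TV}.
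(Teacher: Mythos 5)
Your first paragraph and your treatment of the $\mathrm{SLEM}$ claim match the paper's own argument: the paper likewise observes that the space of first-degree polynomials is $\mathcal{Q}_{N,p}$-invariant, computes $\mathcal{Q}_{N,p}\tilde{\xi}(V)=\tilde{\xi}(QV)$ via Lemma \ref{lemma:relation_generator}, and concludes that the images $\tilde{\xi}(U_k)$ of a Jordan basis of $Q$ form $K$ linearly independent generalised eigenvectors of $\mathcal{Q}_{N,p}$ carrying the Jordan structure $J$; the $\mathrm{SLEM}$ statement then follows from Theorem \ref{thm:mainspec_intro} exactly as you argue, since $\operatorname{Re}\lambda_{\eta,p}\le |\eta|\max_{k\ge 1}\operatorname{Re}\lambda_k<\max_{k\ge 1}\operatorname{Re}\lambda_k$ whenever $|\eta|\ge 2$.

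The gap is in your third paragraph. You correctly isolate the only obstruction to the Jordan-form claim --- a collision $\lambda_j=\lambda_{\eta,p}$ with $|\eta|\ge 2$, in which case the degree-lowering blocks contributed by $\frac{p}{N}\mathcal{A}_{N}$ could in principle lengthen a degree-one Jordan chain --- but you do not settle it: you reduce it to ``the explicit degree-lowering computation'' and stop. Such collisions genuinely occur (e.g.\ $K=3$ with $\operatorname{Spec}(Q)=\{0,-1,-3\}$ and $p/N=1/2$ gives $2\lambda_1-\frac{2p}{N}=\lambda_2$), so the case is not vacuous, and without that verification the assertion that the Jordan form is $J\oplus J'$ --- rather than a form in which a block of $J$ is absorbed into a longer chain rooted in degree two --- is not established. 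For what it is worth, the paper's proof does not perform this verification either: it simply asserts that $\{\tilde{\xi}(U_0),\dots,\tilde{\xi}(U_{K-1})\}$ are ``precisely'' the generalised eigenvectors associated with the eigenvalues in $\operatorname{Spec}(Q)$ and that this system can be completed to a Jordan basis, which is exactly the point at issue when spectra collide. Your closing observation --- that the dominant eigenvalues never collide with the $|\eta|\ge 2$ values, so that their generalized eigenspaces in $\mathcal{Q}_{N,p}$ have the correct dimension and the constant $s$ in Corollary \ref{corol:ergodicity_gral_TV} is unaffected --- is correct and is in fact all that the ergodicity application requires; but as a proof of the corollary as stated, the proposal is incomplete at the collision step.
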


\begin{proof}
	The image by $\mathcal{Q}_{N,p}$ of a first degree polynomial is also a first degree polynomial, i.e.\ the space of first degree polynomials is invariant by $\mathcal{Q}_{N,p}$. 
	Moreover, as a consequence of Lemma \ref{lemma:relation_generator} we obtain
	\[
		\mathcal{Q}_{N,p} \, \tilde{\xi} ({V}) = \mathcal{Q}_{N} \, \tilde{\xi} ({V}) = \Phi_{K,N} \, \mathcal{D}_{N} \xi ({V}) = \Phi_{K,N} \, \xi({Q V}) = \tilde{\xi}(Q {V}).
	\]
	Let $\mathcal{U} = \{U_0, \dots, U_{K-1}\}$ by a Jordan basis of $Q$ formed by generalised eigenvectors of $Q$. 
	Since $\mathcal{Q}_{N, p} \, \tilde{\xi}(U_k) = \tilde{\xi}(Q \, U_k)$, for every $k \in \left[ K-1 \right]_0$, we have that $\{\tilde{\xi}(U_0), \dots, \tilde{\xi}(U_{K-1})\}$ is a system of linearly independent generalised eigenvectors of $Q_{N,p}$. 
	They are precisely the generalised eigenvectors of $\mathcal{Q}_{N,p}$ associated to the eigenvalues in $\operatorname{Spec}(Q) \subset \operatorname{Spec}(\mathcal{Q}_{N,p})$. 
	We can complete this system to a Jordan basis of $\mathbb{R}^{\mathcal{E}_{K,N}}$, adding the generalised eigenvectors of the other eigenvalues on $\mathcal{Q}_{N,p}$. 
	With respect to this Jordan basis $\mathcal{Q}_{N,p}$ becomes similar to $J \oplus J'$, where $J$ is the Jordan matrix of $Q$ and $J'$ is a Jordan matrix of dimension $\binom{K - 1 + N}{N} - K$.
	Note that the eigenvalues $\{ \lambda_0, \lambda_1, \dots, \lambda_{K-1} \}$ are those eigenvalues of $\mathcal{Q}_{N,p}$ of smallest modulus. We thus get that $\mathrm{e}^{t Q}$ and $\mathrm{e}^{ t \mathcal{Q}_{N,p} }$ have the same $\mathrm{SLEM}$, for every $t \ge 0$.
\end{proof}

Every irreducible finite  Markov chain convergences exponentially to stationarity, see e.g.\ \cite[Thm.\ 4.9]{levin_markov_2017}. In addition, the sharpest asymptotic speed of convergence is associated to the $\mathrm{SLEM}$ and the size of the largest Jordan block corresponding to any eigenvalue with this modulus. We recall that the size of the largest Jordan block associated to an eigenvalue $\lambda$ is equal to the multiplicity of $\lambda$ in the minimal polynomial of the rate matrix of the Markov chain.

\begin{proof}[Proof of Corollary \ref{corol:ergodicity_gral_TV}]

Let $\mathrm{e}^{- \rho t}$ be the $\mathrm{SLEM}$ of $\mathrm{e}^{t Q}$ and $s$ the largest multiplicity in the minimal polynomial of $\mathrm{e}^{t Q}$ of all the eigenvalues with modulus $\mathrm{e}^{- \rho t}$, or equivalently, the size of the largest Jordan block associated to eigenvalues with modulus $\mathrm{e}^{- \rho t}$. 
Then, 
\begin{equation}\label{eq:convergence_MC_infinitynorm}
	\operatorname{D}^{\mathrm{TV}}_Q( t ) = \Theta (t^{s-1} \mathrm{e}^{- \rho t}),
\end{equation}
see e.g.\ \cite[Thm.\ 3.2]{MR3296158}.
The result follows as a consequence of Corollary \ref{cor:spectrum_QKNp1stdegreePoly} and \eqref{eq:convergence_MC_infinitynorm}.

\end{proof}

The following example uses Corollary \ref{corol:ergodicity_gral_TV} to provide the rates for the exponential convergence to stationarity of the neutral multi-allelic Moran process considered in \cite{corujo_dynamics_2020}. 

\begin{example}[Circulant mutation rate matrix]%\label{example:circulant_mutation_matrix}
Consider the following mutation rate matrix
 \begin{equation*} % \label{eq:def:Qrot_matrix}
 	Q_{\theta} =
 	\left(
 		\begin{array}{cccccc}
 			-(1+ \theta) &  1 &  0 &    \dots &  0 & \theta\\
 			 \theta & -(1 + \theta) &    1 & \dots & 0 & 0\\
 			 0 &  \theta &  -(1 + \theta) & \dots & 0 & 0\\
 			  \vdots &  \vdots &    \vdots & \ddots & \vdots & \vdots \\
 			 1 &   0 &  0 &  \dots & \theta & -(1 + \theta)\\			
 		\end{array}
 	\right),
 \end{equation*}
 where $\theta \ge 0$. $Q_{\theta}$ is the infinitesimal generator of a simple asymmetric random walk on the $K$-cycle graph. The neutral multi-allelic Moran type process with mutation rate $Q_{\theta}$ was considered in \cite{corujo_dynamics_2020}. 
 Since $Q_{\theta}$ is circulant,  it is possible to explicitly diagonalise it using the Fourier matrix. The eigenvalues of $Q_{\theta}$ are
	\[
		\lambda_k = -2(1 + \theta)\sin^2\left( \frac{\pi k}{K} \right) + \mathrm{i}(1-\theta) \sin\left( \frac{ 2 \pi k}{K} \right),
	\]
	for $0 \leq k \leq K-1$. 
	Thus, the $\mathrm{SLEM}$ of $\mathrm{e}^{t Q_{\theta} }$ is $\mathrm{e}^{ -2(1 + \theta)\sin^2\left( \frac{\pi}{K} \right) t} $, which is attained for two eigenvalues, each one of them with algebraic multiplicity equals to $1$, for $\theta \neq 1$. 
	When $\theta = 1$, the $\mathrm{SLEM}$ of $\mathrm{e}^{t Q_{\theta} }$ is $\mathrm{e}^{ - 4 \sin^2\left( \frac{\pi}{K} \right)}$ and it is attained for a unique eigenvalue with algebraic and geometric multiplicities equal to $2$. 
	Let $\mathcal{Q}_{\theta}$ be the infinitesimal generator of the neutral multi-allelic Moran process with mutation rate $Q_{\theta}$. Then,
	\[
		\operatorname{D}^{\mathrm{TV} }_{\mathcal{Q}_\theta} \left( t \right) = \Theta \left( \mathrm{e}^{- 2(1 + \theta)\sin^2\left( \frac{\pi}{K} \right) t} \right).
	\]
\end{example}

\begin{example}[Convergence rate for a process with non-diagonalisable mutation rate matrix]
	Consider the rate matrix $Q$ defined as
	$$
	Q = \left(
		\begin{array}{rrr}
			-9 & 7 & 2 \\
			1 & -7 & 6 \\
			5 & 7 & -12
		\end{array}
	\right)
	$$
	and $\mathcal{Q}_{N,p}$ the infinitesimal generator of the associated neutral multi-allelic Moran process with mutation rate matrix $Q$.  Then, $\lambda_0 = 0$ and $\lambda_1 = \lambda_2 = -14$, because $-14$ has algebraic multiplicity $2$. Then, for $N$ fixed, the eigenvalues of $\mathcal{Q}_{N,p}$ are
	\[
		\lambda_{L,p} := \eta(1) \lambda_1 + \eta(2) \lambda_2 - L (L - 1) \frac{p}{N} = -14 L - L(L-1) \frac{p}{N},
	\]
	for $L \in \left[ N \right]_0$. In addition, $\lambda_{L,p}$ has algebraic multiplicity $\operatorname{Card}(\mathcal{E}_{2,L}) = L + 1$.

	Note that the minimal polynomial of $Q$ is $m_Q: s \mapsto s \, (s+14)^2$ and according to the notation in Corollary \ref{corol:ergodicity_gral_TV} we get $\rho = 14$ and $s = 2$. Then,
	\[
		\operatorname{D}^{\mathrm{TV}}_{ \mathcal{Q}_{N,p} }( {t} ) = \Theta\left( t \, \mathrm{e}^{- 14 t }\right).
	\]
\end{example}

\subsection{Proof of Theorem \ref{thm:lower_bound}}\label{sec:proof:Thmlowerbound}

Let us denote by $\Gamma_{\mathcal{L}}$ the \emph{carr\'e-du-champ} operator associated to the Markov generator $\mathcal{L}$ on a state space $\mathcal{E}$, i.e.
\[
	\Gamma_{\mathcal{L}} f : \eta \mapsto \big(\mathcal{L} f^2\big)(\eta) - 2 f(\eta) (\mathcal{L} f)(\eta),
\]
for all $\eta \in \mathcal{E}$.

The {carr\'e-du-champ} operator is associated to the evolution in time of the variance of the test function. Indeed,
\[
	\operatorname{Var}_{\eta} (f (\eta_t)) = \int_0^t \mathrm{e}^{s \mathcal{L}} \Big(\Gamma_{\mathcal{L}} \big(\mathrm{e}^{(t-s) \mathcal{L}} f\big) \Big)(\eta) \mathrm{d}s,
\]
where $(\mathrm{e}^{t \mathcal{L}})_{t \ge 0}$ denotes the semigroup generated by $\mathcal{L}$.
See, for example, \cite[p.\ 695]{cloez_quantitative_2016}.

\begin{proof}[Proof of Theorem \ref{thm:lower_bound}]

Our method of proof is based on Wilson's method (cf.\ \cite[Thm.\ 13.28]{levin_markov_2017}).
Let us denote $V = [v_1, v_2, \dots, v_K]$ a real right-eigenvector satisfying $Q V = - \lambda V$.
Then, using Theorem \ref{thm:mainspec_intro} and Lemma \ref{thm:chiV} (specifically equations \eqref{eq:chi1} and \eqref{eq:2degreeVP}) we get that $\tilde{\xi}(V)$ and $\tilde{\xi}(V,V)$ are right-eigenfunctions of $\mathcal{Q}_{N,p}$ satisfying 
\begin{align*}
	\big(\mathrm{e}^{ t \mathcal{Q}_{N,p}} \tilde{\xi}(V)\big) (\eta) &= \mathrm{e}^{ -t  \lambda} \tilde{\xi}(V)(\eta)\\
	\big(\mathrm{e}^{ t \mathcal{Q}_{N,p}} \tilde{\xi}(V,V)\big) (\eta) &= \mathrm{e}^{ - 2 (\lambda + p/N) t } \tilde{\xi}(V,V)(\eta),
\end{align*}
for every $\eta \in \mathcal{E}_{K,N}$. 
%In particular, let us consider $\eta^\star := N \mathbf{e}_k$, such that $v_k = \|V\|_\infty$.
We recall that from \eqref{eq:2degreeVP} we have
\[
	\tilde{\xi}(V,V) = \tilde{\xi}(V)^2 - \tilde{\xi}(V \odot V).
\]
where $V \odot V = [v_1^2, \dots, v_K^2]$ is the componentwise square vector of $V$.

Thereafter, using $\tilde{\xi}(V)$ as a test function we get
\begin{equation}\label{eq:lower_bound_TV_general}
	\operatorname{d}^{\mathrm{TV}}(\delta_{N \mathbf{e}_k} \mathrm{e}^{ t_{N,c} \mathcal{Q}_{N,p} }, \nu_{N,p}) \ge \mathbb{P}_{N \mathbf{e}_k}\left[\tilde{\xi}(V)(\eta_t) \ge \mu_t/2 \right] - \mathbb{P}_{\nu_{N,p}}\left[\tilde{\xi}(V)(\eta_\infty) \ge \mu_t/2 \right],
\end{equation}
where $\mu_t =  \mathbb{E}_{N \mathbf{e}_k}\left[ \tilde{\xi}(V)(\eta_t) \right]  = \mathrm{e}^{ -t  \lambda} N v_k$.
Using Markov and Chebyshev inequalities we obtain
\begin{align*}
	\mathbb{P}_{\nu_{N,p}} \left[ \tilde{\xi}(V)(\eta_\infty) \ge \frac{\mu_t}{2} \right] &\le 4  \mathrm{e}^{ 2 t  \lambda} \frac{ \operatorname{Var}_{\nu_{N,p}} \left[ \tilde{\xi}(V)(\eta_\infty) \right] }{
	|v_k|^2 N^2} , \\
	\mathbb{P}_{N \mathbf{e}_k}\left[\tilde{\xi}(V)(\eta_t) \ge \frac{\mu_t}{2} \right] &\ge 1 - 4 \mathrm{e}^{ 2 t  \lambda} \frac{ \operatorname{Var}_{N \mathbf{e}_k} \left[ \tilde{\xi}(V)(\eta_t) \right]}{
	|v_k|^2 N^2}.
\end{align*}
Thus, plugging these last expressions into \eqref{eq:lower_bound_TV_general} we get
\[
	\operatorname{d}^{\mathrm{TV}}(\delta_{N \mathbf{e}_k} \mathrm{e}^{ t \mathcal{Q}_{N,p} }, \nu_{N,p}) \ge 1 - 8 \frac{\mathrm{e}^{ 2 \lambda t }}{|v_k|^2 N} \sup_{t \ge 0} \frac{  \operatorname{Var}_{N \mathbf{e}_k} \left[ \tilde{\xi}(V)(\eta_t) \right]}{ N}.
\]

We are interested in finding a lower bound for $\operatorname{D}^{\mathrm{TV}}_{\mathcal{Q}_N}$ at time $t_{N,c} = (\ln(N) - c)/2\lambda$.
It remains to prove a bound for the last factor in the previous expression.

Note that
\begin{align*}
	\Gamma_{\mathcal{Q}_{N,p}} \tilde{\xi}(V)  &= \mathcal{Q}_{N,p} (\tilde{\xi}(V)^2) - 2 \tilde{\xi}(V) \mathcal{Q}_{N,p} (\tilde{\xi}(V)) \\
	&= \mathcal{Q}_{N,p} (\tilde{\xi}(V,V)) +  \mathcal{Q}_{N,p} \tilde{\xi}(V \odot V) - 2 \tilde{\xi}(V) \mathcal{Q}_{N,p} (\tilde{\xi}(V)) \\
	&= - 2 \left(\lambda + \frac{p}{N} \right) \tilde{\xi}(V,V) + 2 \lambda \tilde{\xi}(V)^2 + \tilde{\xi}\big(Q (V \odot V) \big)\\
	&= - 2 \frac{p}{N} \tilde{\xi}(V,V) + 2 \lambda \tilde{\xi}(V \odot V) +  \tilde{\xi}\big(Q (V \odot V) \big).
\end{align*}
Thus,
\begin{align*}
	\frac{\operatorname{Var}_{N \mathbf{e}_k} \left( \tilde{\xi}(V) (\eta_t) \right)}{N} &= \frac{1}{N} \int_0^t \mathrm{e}^{s \mathcal{Q}_{N,p}} \Big( \Gamma_{\mathcal{Q}_{N,p}} \big( \mathrm{e}^{(t-s) \mathcal{Q}_{N,p}} \tilde{\xi}(V) \big) \Big) (N \mathbf{e}_k) \mathrm{d}s \\
	& \hspace*{-45 pt} = \frac{1}{N} \int_0^t \mathrm{e}^{- \lambda (t-s)} \mathrm{e}^{s \mathcal{Q}_{N,p}} \Big( \Gamma_{\mathcal{Q}_{N,p}}  \tilde{\xi}(V) \Big) (N \mathbf{e}_k) \mathrm{d}s\\
	& \hspace*{-45 pt} = \frac{1}{N} \int_0^t \mathrm{e}^{- \lambda (t-s)} \mathrm{e}^{s \mathcal{Q}_{N,p}} \Big( - 2 \frac{p}{N} \tilde{\xi}(V,V) + 2 \lambda \tilde{\xi}(V \odot V ) +  \tilde{\xi}\big(Q (V \odot V)\big) \Big) (N \mathbf{e}_k) \mathrm{d}s.
\end{align*}
Note that
\[
\frac{1}{N} \int_0^t \mathrm{e}^{- \lambda (t-s)} \mathrm{e}^{s \mathcal{Q}_{N,p}} \Big( - 2 \frac{p}{N} \tilde{\xi}(V,V) \Big) (N \mathbf{e}_k) \mathrm{d}s = -2 \left( 1 - \frac{1}{N} \right) p \, v_k^2\,  \mathrm{e}^{- \lambda t} \int_0^t \mathrm{e}^{ - (\lambda + 2 p/N)s} \mathrm{d}s  \le 0.
\]
Hence, 
\begin{align*}
	\frac{\operatorname{Var}_{ N \mathbf{e}_k} \left( \tilde{\xi}(V) (\eta_t) \right)}{N } 
	&\le \frac{1}{N } \int_0^t \mathrm{e}^{- \lambda (t-s)} \mathrm{e}^{s \mathcal{Q}_{N,p}} \Big( 2 \lambda \tilde{\xi}(V \odot V) +  \tilde{\xi}\big(Q (V\odot V)\big) \Big) ({N \mathbf{e}_k}) \mathrm{d}s \\
	&\le\left(  2 \lambda \left\| \frac{\tilde{\xi}(V \odot V )}{N} \right\|_{\infty} + \left\| \frac{\tilde{\xi}\big(Q (V \odot V)\big)}{N} \right\|_{\infty} \right) \int_{0}^{t} \mathrm{e}^{- \lambda (t-s)} \mathrm{d}s \\
	&\le (2 + \|Q\|_{\infty}/\lambda) \|V\|_{\infty}.
\end{align*}
%We obtain the desired inequality.
The lower bound for $\operatorname{D}_{\mathcal{Q}_{N,p}}^{\mathrm{TV}}$ is obtained considering the initial distribution concentrated at $N \mathbf{e}_{k^\star}$, where $k^\star$ satisfies $|v_{k^\star}| = \|V\|_{\infty}$.
% \[
% 	\operatorname{D}^{\mathrm{TV}}\left( \frac{\ln N - c}{2 \lambda} \right) \ge 1 - C_k \mathrm{e}^{-c},
% \]
% where $C_k = 8(2 \lambda + \|Q\|_{\infty}) \|V\|_{\infty}/v_k $, for all $c \ge 0$.

\end{proof}

\section{Neutral multi-allelic Moran type process with parent independent mutation} \label{sec:reversible}

In this section we discuss some applications of the Theorem \ref{thm:mainspec_intro} and its consequences to the neutral multi-allelic Moran model with parent independent mutation scheme. 
We will use some well-known results on finite state reversible Markov chains and their convergence to stationarity.
We refer the interested reader to \cite{bremaud_markov_2020}, \cite{levin_markov_2017} and \cite{MR1490046}, for further details.
We will focus on the case where the Moran process has \emph{parent independent mutation} \cite{etheridge_mathematical_2011}, and thus it is reversible.
In fact, as we claimed in Lemma \ref{thm_reversible_distrib}, the neutral Moran process with $p > 0$ is reversible if and only if its mutation matrix satisfies the \emph{parent independent} condition. 
We explicitly diagonalise the infinitesimal generator of the neutral multi-allelic Moran process with parent independent mutation rate using the multivariate Hahn and Krawtchouk polynomials, which allows us to provide an explicit expression for the transition function of this process. 
Using these results we prove Theorems \ref{thm:cutoff} and \ref{thm:TVcutoff}.

\subsection{Proof of Theorems \ref{thm:cutoff} and \ref{thm:TVcutoff} }\label{subsec:reversibleMoranModel}
Let us recall that the generator of the neutral multi-allelic Moran process with parent independent mutation defined by (\ref{def:generatorQKNp_reversible}), which acts on a real function $f$ on $\mathcal{E}_{K,N}$ as follows
\begin{equation*}\label{def:generatorQKNp_reversible}
	\big( \mathcal{L}_{N,p} f \big) (\eta) := \sum_{i,j=1}^{K} \eta(i) \left[ f(\eta - \mathbf{e}_i + \mathbf{e}_j) - f(\eta)\right]
\left( \mu_j + p \frac{\eta(j)}{N} \right),
\end{equation*}
for all $\eta \in \mathcal{E}_{K,N}$.

\subsection*{Multivariate orthogonal Hahn and Krawtchouk polynomials} The rest of the section is devoted to the characterisation of the eigenfunctions of $\mathcal{L}_{N,p}$ and the proof of Theorem \ref{thm:cutoff}. Let us establish some notation that will be useful in the sequel to study the eigenfunctions of $\mathcal{L}_{N,p}$. For a $K$-dimensional real vector $\mathbf{x}$ we define the following quantities:
\begin{equation*}
	|\mathbf{x}_i| := \sum_{j=1}^i x_j, \;\;\;
	|\mathbf{x}^i| := \sum_{j=i}^K x_j.
\end{equation*}
We set by convention $|\mathbf{x}^{i}| := 0$, for all $i > K$.

The orthogonal polynomials we define below are indexed by the set $\bigcup_{L = 0}^N  \mathcal{E}_{K-1,L}$, where $\mathcal{E}_{K-1,0} = \{\mathbf{0}\}$ is the set formed by the $K-1$ dimensional null vector.
% Consider $\eta \in \mathcal{E}_{K-1,L}$ for some $L \in \left[ N \right]_0$. 
We define the \emph{multivariate Hahn polynomials} on $\mathcal{E}_{K,N}$, indexed by $\eta \in \mathcal{E}_{K-1, L}$, for $L \in \left[ N \right]_0$, and denoted $H_{\eta}(\mathbf{x}; N, \pmb{\alpha})$, as follows
\begin{equation}\label{def:Hahn_poly_multiv}
	H_{\eta}(\mathbf{x}; N, \pmb{\alpha}) := \frac{1}{(N)_{[|\eta|]}} \prod_{k=1}^{K-1} (- N + |\mathbf{x}_{k-1}| + |\eta^{k+1}|)_{(\eta(k))} H_{\eta(k)}(x_k; M_k,\alpha_k, \gamma_k)
\end{equation}
where
\(
	M_k = N - |\mathbf{x}_{k-1}| - |\eta^{k+1}|
\),
\(
	\gamma_k =  |\pmb{\alpha}^{k+1}| + 2 |\eta^{k+1}|
\)
and $H_n(x; M, \beta, \gamma)$ is the \emph{univariate Hahn polynomial} defined by
\begin{align}
	H_n(x; M, \beta, \gamma)  &:= {}_3 F_2\left(
		\begin{array}{rc|}
			-n,&  n + \beta + \gamma - 1, -x \\
			\beta, & -M 
		\end{array}
		\; 1
	\right) \label{eq:Hahn_hypergeometric_notation}\\
		&= \sum_{j=0}^n \frac{(-n)_{(j)} (n + \beta + \gamma - 1)_{(j)} (-x)_{(j)} }{ \beta_{(j)} (-M)_{(j)}} \frac{1}{ j!}. \nonumber
\end{align}
Note that for $\mathbf{0} \in \mathcal{E}_{K-1,0}$ we obtain $H_{\mathbf{0}}(\cdot \, ; N, \pmb{\alpha}) \equiv 1$. In addition, it is not difficult to check that $H_{\eta}(N \mathbf{e}_{K}; N, \pmb{\alpha}) \equiv 1$, for all $\eta \in \bigcup\limits_{L=0}^N \mathcal{E}_{K-1,L}$.

We also define the \emph{multivariate Krawtchouk polynomials} on $\mathcal{E}_{K,N}$ denoted $K_{\eta}(\mathbf{x}; N, \mathbf{q})$, indexed by $\eta \in \bigcup\limits_{L = 0}^N \mathcal{E}_{K-1, L}$, with $\mathbf{q} \in (0,1)^K$ such that $|\mathbf{q}| = 1$, as the multivariate polynomials satisfying:
\begin{equation}\label{def:Krawtchouk_poly_multiv}
	K_{\eta}(\mathbf{x}; N, \mathbf{q}) := \frac{1}{(N)_{[|\eta|]}} \prod_{k=1}^{K-1} (- N + |\mathbf{x}_{k-1}| + |\eta^{k+1}|)_{(\eta(k))} K_{\eta(k)}\left(x_k; M_k, \frac{q_k}{|\pmb{q}^k|} \right)
\end{equation}
where
\(
	M_k = N - |\mathbf{x}_{k-1}| - |\eta^{k+1}|,
\)
and $K_n(x; N, q)$ is the \emph{univariate Krawtchouk polynomial} defined by
\begin{align}
	K_n(x; N, q)  &:= {}_2 {F}_1\left(
		\begin{array}{rc|}
			-n,&  -x \\
			 & \hspace*{-25pt} -N 
		\end{array}
		\; \frac{1}{q}
	\right) \label{eq:Krawtchouk_hypergeometric_notation}\\
		&= \sum_{j=0}^n \frac{(-n)_{(j)} (-x)_{(j)} }{ (-N)_{(j)}} \frac{1}{ j! q^j}. \nonumber
\end{align}
In addition, $K_{\mathbf{0}}(\cdot \, ; N, \mathbf{q}) \equiv 1$, for $\mathbf{0} \in \mathcal{E}_{K-1,0}$, and $K_{\eta}(N \mathbf{e}_{K}; N, \mathbf{q}) \equiv 1$, for all $\eta \in \bigcup\limits_{L=0}^N \mathcal{E}_{K-1,L}$.

See \cite[Ch.\ 6]{ismail_classical_2005} and \cite[Ch.\ 9]{koekoek_hypergeometric_2010} for more details about the univariate Hahn and Krawtchouk polynomials. 
We define the univariate Hahn and Krawtchouk polynomials in (\ref{eq:Hahn_hypergeometric_notation}) and (\ref{eq:Krawtchouk_hypergeometric_notation}), respectively, using the hypergeometric functions notation which could be very useful for algebraic manipulations (cf.\ \cite[Ch.\ 10]{koekoek_hypergeometric_2010}). 
For instance, consider $\pmb{\alpha} = N \pmb{\mu}/p$ in the definition of Hahn polynomials, then
\begin{align*}
	 \lim\limits_{p \rightarrow 0^+}H_{\eta(k)}(x_k; M_k, \alpha_k,  &|\pmb{\alpha}^{k+1}| + 2  |\eta^{k+1}|) = \lim\limits_{p \rightarrow 0^+}H_{\eta(k)}\left(x_k; M_k, \frac{N \mu_k}{p},  \frac{N |\pmb{\mu}^{k+1}|}{p} + 2  |\eta^{k+1}|\right) \\
	 	&= \lim\limits_{p \rightarrow 0^+}{}_3 F_2\left(
		\begin{array}{rc|}
			-\eta(k), &  \eta(k) + N \mu_k/p + N|\pmb{\mu}^{k+1}|/p + 2  |\eta^{k+1}| - 1, -x_k \\
			N \mu_k/p, & -M_k 
		\end{array}
		\; 1
	\right)\\
	&= {}_2 F_1\left(
		\begin{array}{c|}
			-\eta(k),  -x_k \\
			 -M_k 
		\end{array}
		\; \frac{\mu_k + |\pmb{\mu}^{k+1}|}{\mu_k}
	\right) \\%= {}_2 F_1\left(
		%\begin{array}{c|}
		%	-\eta(k),  -x_k \\
		%	 -M_k 
		%\end{array}
		%\; \frac{|\pmb{\mu}^{k}|}{\mu_k}
	%\right)\\
	&= K_{\eta(k)} \left(x_k; N , \frac{\mu_k}{|\pmb{\mu}^{k}|} \right),
\end{align*}
for every $k \in \left[ K \right]$, where the calculation of the limit in the third equation follows from \cite[Eq.\ (1.4.5)]{koekoek_hypergeometric_2010} and the last inequality follows from the definition of univariate Krawtchouk polynomials in (\ref{eq:Krawtchouk_hypergeometric_notation}). Now, using the previous limit and the definitions (\ref{def:Hahn_poly_multiv}) and (\ref{def:Krawtchouk_poly_multiv}) of the multivariate Hahn and Krawtchouk polynomials we get
\[
	\lim\limits_{p \rightarrow 0^+} H_{\eta}\left(\mathbf{x}; N, N \frac{\pmb{\mu}}{p}\right) = K_{\eta}\left(\mathbf{x}; N, \frac{\pmb{\mu}}{|\pmb{\mu}|}\right).
\]
Thus, similarly to how we define $\nu_{N,p}$ in (\ref{def:nuKNp}), we define the multivariate polynomial $Q_{\eta}(\cdot \, ; N, \pmb{\mu}, p)$ by
\begin{equation}\label{def:right_eigenvectors}
	Q_{\eta}(\mathbf{x}; N, \pmb{\mu}, p) := \left\{
		\begin{array}{ccc}
		H_{\eta}\left(\mathbf{x}; N , \frac{N \pmb{\mu}}{p}\right) & \text{ if } & p > 0\\
		K_{\eta}\left(\mathbf{x}; N , \frac{\pmb{\mu}}{|\pmb{\mu}|}\right) & \text{ if } & p = 0,
		\end{array}
	\right.
\end{equation}
for every $\eta \in \bigcup_{L = 0}^N \mathcal{E}_{K-1,L}$, and for all $\mathbf{x} \in \mathcal{E}_{K,N}$. Note that the functions $Q_{\eta}(\mathbf{x}; N, \pmb{\mu}, p)$ are continuous when $p$ tends towards zero, in the sense that:
\[
	\lim\limits_{p \rightarrow 0^+} Q_{\eta}\left(\mathbf{x}; N, \pmb{\mu}, p\right) = Q_{\eta}\left(\mathbf{x}; N, \pmb{\mu}, 0\right),
\]
for every $\mathbf{x} \in \mathcal{E}_{K,N}$.
The following result sets some important properties of the multivariate Hahn and Krawtchouk polynomials.

\begin{proposition}[Orthogonality of the Hahn and Krawtchouk polynomials]\label{proposition:Hahn_polynomials}
	The multivariate polynomials $Q_{\eta}$ defined by (\ref{def:right_eigenvectors}) satisfy the following properties:
	\begin{itemize}
		\item[a)] $Q_{\eta}(\cdot \, ; N, \pmb{\mu}, p)$ is a polynomial on $\mathcal{E}_{K,N}$ of total degree $|\eta|$, for every $\eta \in \bigcup\limits_{L=0}^N \mathcal{E}_{K-1,L}$.
		\item[b)] The polynomials $Q_{\eta}(\cdot \, ; N, \pmb{\mu}, p)$ are orthogonal on $\mathcal{E}_{K,N}$ with respect to the probability distribution $\nu_{N,p}$, defined by (\ref{def:nuKNp}), i.e.
	\begin{align*}
		\mathbb{E}_{\nu_{N,p}} \left[ Q_{\eta}( \cdot \, ; N, \pmb{\mu},p ) Q_{\eta'}( \cdot \, ; N, \pmb{\mu}, p) \right] &= \sum_{\xi \in \mathcal{E}_{K,N} } Q_{\eta}( \xi \, ; N, \pmb{\mu}, p ) Q_{\eta'}( \xi \, ; N, \pmb{\mu}, p) \nu_{N,p}(\xi) \\
			&=  d_{\eta,p}^2 \; \delta_{\eta, \eta'},
	\end{align*}
	for every $\eta, \eta' \in \bigcup\limits_{L=0}^N \mathcal{E}_{K-1,L}$, where $\delta_{\eta, \eta'}$ stands for the Kronecker delta function and 
	\[
		d_{\eta,p}^2 = \left\{
		\begin{array}{ccc}
		\displaystyle \frac{(|\pmb{\alpha}| + N)_{(|\eta|)}}{ (N)_{[|\eta|]} |\pmb{\alpha}|_{(2|\eta|)} } \prod_{j = 1}^{K - 1} \frac{(|\pmb{\alpha}^j| + |\eta^j| + |\eta^{j + 1}| - 1)_{(\eta(j))} (|\pmb{\alpha}^{j+1}| + 2 |\eta^{j+1}|)_{(\eta(j))} \eta(j)! }{ (\alpha_j)_{(\eta(j))} }, & & p > 0 \\
		\displaystyle \frac{1}{ (N)_{[|\eta|]} } \prod_{j = 1}^{K - 1} \frac{ (|\pmb{\pi}^j|)^{\eta(j)} (|\pmb{\pi}^{j+1}|)^{\eta(j)} }{\pi_j^{\eta(j)}} \eta(j)!,&  & p = 0,
		\end{array}
		\right.
	\]
	where $\pmb{\alpha} = N \pmb{\mu}/p$ and $\pmb{\pi} = \pmb{\mu}/|\pmb{\mu}|$.
	\end{itemize}
	 
\end{proposition}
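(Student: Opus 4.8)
The plan is to prove the two parts separately and to deduce the Krawtchouk case ($p=0$) from the Hahn case ($p>0$) via the continuity $\lim_{p\to 0^+}Q_\eta(\mathbf{x};N,\pmb{\mu},p)=Q_\eta(\mathbf{x};N,\pmb{\mu},0)$ established just before the proposition. Thus it suffices to work with the multivariate Hahn polynomials $H_\eta(\cdot\,;N,\pmb{\alpha})$ for $\pmb{\alpha}=N\pmb{\mu}/p$, which are to be shown orthogonal with respect to $\nu_{N,p}=\mathcal{DM}(\cdot\mid N,\pmb{\alpha})$ defined in (\ref{def:nuKNp}); the corresponding statements for $K_\eta$ and $\mathcal{M}(\cdot\mid N,\pmb{\mu}/|\pmb{\mu}|)$ then follow in the limit, using $\delta_{\eta,\eta'}$ and the norm formulas being continuous in $p$.

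For part (a) I would argue directly from the hypergeometric representation (\ref{eq:Hahn_hypergeometric_notation}). Since $(-x_k)_{(j)}$ has degree $j$ in $x_k$ and the series terminates at $j=\eta(k)$, each univariate factor $H_{\eta(k)}(x_k;M_k,\alpha_k,\gamma_k)$ has degree $\eta(k)$ in $x_k$. The decisive point is the cancellation between the rising-factorial prefactor in (\ref{def:Hahn_poly_multiv}) and the denominator factor $(-M_k)_{(j)}=(-N+|\mathbf{x}_{k-1}|+|\eta^{k+1}|)_{(j)}$ occurring inside $H_{\eta(k)}$: their quotient is
\[
\frac{(-N+|\mathbf{x}_{k-1}|+|\eta^{k+1}|)_{(\eta(k))}}{(-N+|\mathbf{x}_{k-1}|+|\eta^{k+1}|)_{(j)}}=(-N+|\mathbf{x}_{k-1}|+|\eta^{k+1}|+j)_{(\eta(k)-j)},
\]
a polynomial of degree $\eta(k)-j$ in $x_1,\dots,x_{k-1}$. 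Multiplying by $(-x_k)_{(j)}$ shows that every monomial produced by the $k$-th factor has total degree exactly $\eta(k)$, so the product over $k$ has total degree $\sum_{k}\eta(k)=|\eta|$, which proves (a) and simultaneously pins down the top-degree part.

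For part (b) I would exploit the sequential sampling structure of the Dirichlet multinomial: conditionally on $x_1,\dots,x_{k-1}$, the coordinate $x_k$ follows a Beta-binomial (univariate Dirichlet multinomial) on $\{0,\dots,N-|\mathbf{x}_{k-1}|\}$ with parameters $(\alpha_k,|\pmb{\alpha}^{k+1}|)$, so that $\nu_{N,p}$ factorises into such conditionals. The weight making the univariate Hahn polynomial $H_n(\cdot\,;M,\beta,\gamma)$ orthogonal is exactly this Beta-binomial with $(\beta,\gamma)=(\alpha_k,\gamma_k)$ and $\gamma_k=|\pmb{\alpha}^{k+1}|+2|\eta^{k+1}|$. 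I would then evaluate $\sum_{\xi}Q_\eta(\xi)Q_{\eta'}(\xi)\nu_{N,p}(\xi)$ by iterated summation over the free variables from $x_{K-1}$ inward: the $(K-1)$-th factor depends on $x_{K-1}$ only through its own Hahn polynomial (the prefactor depends on $x_1,\dots,x_{K-2}$ alone), so the classical univariate Hahn orthogonality relation, e.g.\ \cite[Ch.\ 9]{koekoek_hypergeometric_2010}, collapses the $x_{K-1}$-sum, forces $\eta(K-1)=\eta'(K-1)$, and leaves a sum of the same shape in one fewer category. Proceeding by induction on $K$, the global Kronecker delta $\delta_{\eta,\eta'}$ emerges and the univariate squared norms accumulate into the claimed product formula for $d_{\eta,p}^2$.

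The hard part will be the bookkeeping in part (b). One must check that the Hahn parameter $M_k=N-|\mathbf{x}_{k-1}|-|\eta^{k+1}|$ matches the conditional Beta-binomial only after the delta constraints $\eta(k')=\eta'(k')$ for $k'>k$, collected in previous steps, have effectively reduced the number of remaining trials by $|\eta^{k+1}|$; thus the induction hypothesis must carry this shift. One must also verify that, once $x_{K-1}$ is summed out against its Beta-binomial weight, the residual rising-factorial prefactors recombine so that the reduced expression is again an orthogonality integral of multivariate Hahn polynomials over a Dirichlet multinomial in $K-1$ categories, and one must carefully track every rising/falling-factorial constant — the univariate Hahn norm, the factor $(N)_{[|\eta|]}$, and the prefactor normalisations — to land precisely on the stated $d_{\eta,p}^2$. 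Finally, the Krawtchouk norm (the $p=0$ line of $d_{\eta,p}^2$) follows by letting $p\to 0^+$ with $\pmb{\alpha}=N\pmb{\mu}/p$ and $\pmb{\pi}=\pmb{\mu}/|\pmb{\mu}|$, using the continuity of $Q_\eta$.
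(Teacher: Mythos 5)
Your plan is sound, but note that the paper does not actually prove this proposition: immediately after the statement it defers entirely to the literature, citing Theorems 5.4 and 6.2 of Iliev--Xu and Propositions 2.1 and 2.4 of Khare--Zhou for the Hahn and Krawtchouk cases respectively. What you have sketched is essentially the classical argument that lives in those references (going back to Karlin--McGregor): for (a), the cancellation $(-M_k)_{(\eta(k))}/(-M_k)_{(j)}=(-M_k+j)_{(\eta(k)-j)}$ is exactly the right observation — it turns each factor of \eqref{def:Hahn_poly_multiv} into a genuine polynomial of total degree $\eta(k)$ in $x_1,\dots,x_k$, and the monomial $\prod_k x_k^{\eta(k)}$ survives with nonzero coefficient, so the total degree is exactly $|\eta|$; for (b), the sequential factorisation of $\mathcal{DM}(\cdot\mid N,\pmb{\alpha})$ into conditional Beta-binomials, the collapse of the $x_{K-1}$-sum by univariate Hahn orthogonality (where $|\eta^{K}|=0$ makes the parameters match), and the induction on $K$ carrying the shift $|\eta^{k+1}|$ in the effective number of trials is precisely the mechanism behind the cited results; and the passage to $p=0$ by continuity is legitimate, since the orthogonality relation is a finite sum, the paper establishes $Q_\eta(\cdot\,;N,\pmb{\mu},p)\to Q_\eta(\cdot\,;N,\pmb{\mu},0)$ and $\nu_{N,p}\to\nu_{N,0}$ pointwise, and a direct power count with $\pmb{\alpha}=N\pmb{\mu}/p\to\infty$ shows the Hahn norm tends to the stated Krawtchouk norm. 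The only caveat is that your part (b) remains a plan rather than a proof — the recombination of the univariate norms and the rising-factorial prefactors into a $(K-1)$-category orthogonality relation with the correct shifted parameters is the entire technical content, and you explicitly defer it; to make the argument self-contained you would need to carry out that induction in detail, whereas matching the paper only requires invoking the same references it does.
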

See Theorem 5.4 in \cite{iliev_discrete_2007} and Proposition 2.1, also Remark 2.2, in \cite{khare_rates_2009} for the proofs of these results on multivariate Hahn polynomials. See Theorem 6.2 in \cite{iliev_discrete_2007} and Proposition 2.4 in \cite{khare_rates_2009} for the proofs for the multivariate Krawtchouk polynomials. 
The system of orthogonal polynomials for a fixed multinomial distribution is not unique. A general construction of the multivariate Krawtchouk polynomials can be found in \cite{MR3258404}.

\subsection*{Kernel polynomials for Dirichlet multinomial and multinomial distributions} 

Consider $\nu$ a multivariate distribution on $\mathcal{E}_{K,N}$ and $\{Q^0_\eta\}$ an orthonormal system of polynomials in $l^2(\mathbb{R}^{\mathcal{E}_{K,N}}, \nu)$. Then, the \emph{kernel polynomial} associated to $\nu$ is defined by
\[
	h_n(\mathbf{x}, \mathbf{y}) := \sum\limits_{|\eta| = n} Q_\eta^0(\mathbf{x}) Q_\eta^0(\mathbf{y}),
\]
for all $\mathbf{x}, \mathbf{y} \in \mathcal{E}_{K,N}$ and for every $n \in \left[ N \right]_0$. 
The kernel polynomials are invariant under the choice of the orthonormal systems, i.e.\ they only depend on the distribution $\nu$. 
Kernel polynomials are used for manipulating sums of products of orthogonal polynomials. 
They are especially useful to obtain explicit expressions for the transition function of a reversible Markov chain with polynomial eigenfunctions, as we show in Remark \ref{prop:eigenvectors_of_reversible} below.

We next review the expressions for the kernel polynomials of the Dirichlet multinomial and the multinomial distributions. 
Let us denote by $h_n(\mathbf{x}, \mathbf{y} ; p)$ the $n$-th kernel polynomial of $\nu_{N,p}$, for all $n \in \left[ N \right]_0$. Then, it can be proven that
% \begin{equation}\label{eq:kernel_poly_Dirichlet}
% 		h_{n}(\mathbf{x}, \mathbf{y} ; p) = \left( |\pmb{\alpha}| + 2n - 1\right) \frac{(|\pmb{\alpha}| + N)_{(n)}}{N_{[n]}} \sum_{m = 0}^n (-1)^{n-m} \frac{(|\pmb{\alpha}| + m)_{(n-1)}}{m! (n-m)!} \psi_m(\mathbf{x}, \mathbf{y} ; p),
% \end{equation}
% for all $n \in \left[ N \right]_0$, where
% \begin{equation*}
% 	\psi_m(\mathbf{x}, \mathbf{y} ; p) = \sum\limits_{|\mathbf{z}| = m} \binom m}{\mathbf{z}} \frac{|\pmb{\alpha}|_{(m)}}{\prod\limits_{i = 1}^K (\alpha_i)_{(z_i)}} \frac{\prod\limits_{i = 1}^K (\alpha_i + x_i)_{(z_i)} (\alpha_i + y_i)_{(z_i)} }{ (|\pmb{\alpha}| + N )_{(m)} (|\pmb{\alpha}| + N )_{(m)} },
% \end{equation*}
% for all $p > 0$, where $\mathbf{\alpha} = N \pmb{\mu} / p$.
% This expression becomes much simpler for $\mathbf{x} = \mathbf{y} = N \mathbf{e}_k$:
\begin{equation}\label{eq:kernel_Nek&Nek}
	h_n(N \mathbf{e}_k, N \mathbf{e}_k ; p) = \binom{N}{ n} \frac{(|\pmb{\alpha}| + 2 n - 1) (|\pmb{\alpha}|)_{(n-1)} (|\pmb{\alpha}| - \alpha_k)_{(n)} }{ (|\pmb{\alpha}| + N)_{(n)} (\alpha_k)_{(n)} },
\end{equation}
for all $p > 0$, see \cite[Eq.\ (2.18)]{khare_rates_2009}. 

For $p = 0$, $\nu_{N,0}$ follows a $\mathcal{M}( \cdot \mid N, \pmb{\mu}/|\pmb{\mu}|)$ distribution and its $n$-th kernel polynomial  satisfies
% \begin{equation}\label{eq:kernel_poly_multinomial}
% 	h_{n}(\mathbf{x}, \mathbf{y}; 0) = \sum\limits_{m = 0}^n \binom{N}{ m} \binom{N - m}{n - m} (-1)^{n - m} \zeta_m(\mathbf{x}, \mathbf{y}; 0),
% \end{equation}
% for all $n \in \left[ N \right]_0$, where
% \begin{equation*}
% 	\zeta_m(\mathbf{x}, \mathbf{y}; 0) = \sum\limits_{|\mathbf{z}| = m} \binom{m}{\mathbf{z}} \frac{ \prod\limits_{i = 1}^K (x_i)_{[z_i]} (y_i)_{[z_i]} }{N_{[m]} N_{[m]}} \left( \frac{\mu_k}{|\pmb{\mu}|} \right)^{- z_i}.
% \end{equation*}
% In particular,
\begin{equation}\label{eq:kernel_poly_Ne_k}
	h_n (\mathbf{x}, N \mathbf{e}_k; 0) = \sum_{m = 0}^n \binom{N}{m} \binom{N - m}{n - m} (-1)^{n- m} \frac{(x_k)_{[m]}}{N_{[m]}} \left( \frac{\mu_k}{|\pmb{\mu}|} \right)^{-m},
\end{equation}
and
\begin{equation}\label{eq:kernel_poly_Ne_k_Ne_k}
	h_n (N \mathbf{e}_k, N \mathbf{e}_k; 0) = \binom{N}{n} \left(\frac{|\pmb{\mu}|}{\mu_k} - 1 \right)^n.
\end{equation}

For more details on the kernel polynomials for the multinomial distribution see e.g.\ \cite[Prop.\ 2.8]{khare_rates_2009} and \cite{MR3915331}.
Also, for more details on the kernel polynomials for the Dirichlet multinomial distribution see e.g.\ \cite[Prop.\ 2.6]{khare_rates_2009} and \cite{MR3037164}.

We next show that the right eigenfunctions of $\mathcal{L}_{N,p}$ are given by multivariate orthogonal polynomials defined by (\ref{def:right_eigenvectors}). 

\begin{remark}[Eigenfunctions of $\mathcal{L}_{N,p}$]\label{prop:eigenvectors_of_reversible}
	The right eigenfunctions of $\mathcal{L}_{N,p}$ are the multivariate polynomials $Q_{\eta}(\cdot\, ; N, \pmb{\mu}, p)$ with associated eigenvalue $\lambda_{L,p}$, for $\eta \in \mathcal{E}_{K-1,L}$, for $L \in \left[ N \right]_0$. Moreover, the set of right eigenfunctions
	\begin{equation*}%\label{def:set_right_eigenvector}
		\left\{ Q_{\eta}(\cdot\, ; N, \pmb{\mu}, p), \eta \in \bigcup\limits_{L=0}^N \mathcal{E}_{K-1,L} \right\}
	\end{equation*} 
	is orthogonal in $l^2(\nu_{N,p})$, for all $p \ge 0$. In addition, the functions $\phi_\eta(\cdot\, ; N, \pmb{\mu},p)$ defined by
	\[
		\phi_\eta(\eta' ; N, \pmb{\mu}, p) := \nu_{N,p}(\eta') Q_{\eta}(\eta' ; N, \pmb{\mu}, p)
	\]
	are left eigenfunctions of $\mathcal{L}_{N,p}$ and the set of left eigenfunctions
	is orthogonal in $l^2(1/\nu_{N,p})$.

	Furthermore, the transition kernel of the Markov chain driven by $\mathcal{L}_{N,p}$ can be decomposed as follows:
	\begin{equation}\label{eq:explicit_transition_eta_t}
		 (\mathrm{e}^{t \mathcal{L}_{N,p}} \delta_{\xi})(\eta) = \nu_{N,p}(\xi) \left( 1 + \sum_{L = 1}^N \mathrm{e}^{\lambda_{L,p} t} h_L(\eta, \xi; p) \right),
	\end{equation}
	where $h_L(\eta, \xi; p)$ is the kernel polynomial associated to $\nu_{N,p}$.
\end{remark}

Griffiths and Span\`{o} \cite{MR3037164} give the expression (\ref{eq:explicit_transition_eta_t}) for the transition kernel of the process driven by $\mathcal{L}_{N,p}$, for $p > 0$, as an example of the usefulness of the kernel polynomials for the Dirichlet multinomial distribution. 

The following result provides an explicit expression for the chi-square distance between the distribution of the Markov process driven by $\mathcal{L}_{N,p}$ starting at $N \mathbf{e}_k$ and its stationary distribution at a given time $t$.

\begin{corollary}[Explicit expression for the chi-square distance]\label{thm:convergence_reversible_Chi2}
	For $K \geq 2$, $N \geq 2$ and $p \ge 0$, we obtain the following explicit expression for the chi-square distance between the distribution of the reversible process driven by $\mathcal{L}_{N,p}$ at time $t$
	when the initial distribution is concentrated at $N \mathbf{e}_k$, for $k \in [K]$:
	% and its stationary distribution:
	% \begin{equation}\label{eq:explicit_expr_Chi2}
	% 	\chi^2_{\eta}(t) =  \sum_{L = 1}^N \mathrm{e}^{2 \lambda_{L,p} t} h_L(\eta, \eta; p) ,
	% \end{equation}
	% where $\lambda_{L,p}$ is given by (\ref{eq:eigenvalues_reversible}) and $h_L(\eta, \eta; p)$ is defined by (\ref{eq:kernel_poly_Dirichlet}) and (\ref{eq:kernel_poly_multinomial}), for $p > 0$ and $p = 0$, respectively.
	% In particular, when $\eta = N \mathbf{e}_k$, for $k \in \left[ K \right]$, the chi-square distance to stationarity reduces to
	\begin{equation}\label{eq:kernel_polynomial_Hahn_eN}
		\chi^2_{N \mathbf{e}_k}(t) 
		= 
		\left\{
		\begin{array}{cc}
		\left[ 1 + \mathrm{e}^{- 2 |\pmb{\mu}|  t} \left( \frac{|\pmb{\mu}|}{\mu_k} - 1 \right) \right]^N - 1 & \text{ if } p = 0 \\ 
		\displaystyle \sum_{L = 1}^N \mathrm{e}^{ 2 \lambda_{L,p} t} \binom{N}{L} \frac{(|\pmb{\alpha}| + 2 L - 1) (|\pmb{\alpha}|)_{(L-1)} (|\pmb{\alpha}| - \alpha_k)_{(L)} }{ (|\pmb{\alpha}| + N)_{(L)} (\alpha_k)_{(L)} } & \text{ if } p > 0
		\end{array}
		\right.
	\end{equation}
	
\end{corollary}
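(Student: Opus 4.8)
The plan is to exploit the spectral decomposition of the transition kernel from Remark \ref{prop:eigenvectors_of_reversible} together with the reproducing (orthonormality) property of the kernel polynomials. Starting from the definition of $\chi^2_{N\mathbf{e}_k}(t)$, I would substitute the decomposition \eqref{eq:explicit_transition_eta_t} to obtain, for every $\xi \in \mathcal{E}_{K,N}$,
\[
(\mathrm{e}^{t\mathcal{L}_{N,p}}\delta_\xi)(N\mathbf{e}_k) - \nu_{N,p}(\xi) = \nu_{N,p}(\xi)\sum_{L=1}^N \mathrm{e}^{\lambda_{L,p}t}\,h_L(N\mathbf{e}_k, \xi; p).
\]
Squaring, dividing by $\nu_{N,p}(\xi)$ and summing over $\xi$ then gives
\[
\chi^2_{N\mathbf{e}_k}(t) = \sum_{\xi \in \mathcal{E}_{K,N}}\nu_{N,p}(\xi)\left(\sum_{L=1}^N \mathrm{e}^{\lambda_{L,p}t}\,h_L(N\mathbf{e}_k, \xi; p)\right)^2.
\]

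The key step is to expand the square and interchange the order of summation. Expanding each $h_L(N\mathbf{e}_k, \cdot; p)$ in an orthonormal family $\{Q^0_\zeta\}$ for $\nu_{N,p}$, the relation $\sum_\xi \nu_{N,p}(\xi)Q^0_\zeta(\xi)Q^0_{\zeta'}(\xi) = \delta_{\zeta,\zeta'}$ annihilates every cross term with $L \neq L'$ and collapses the diagonal terms to $h_L(N\mathbf{e}_k, N\mathbf{e}_k; p)$. This reduces the double sum to
\[
\chi^2_{N\mathbf{e}_k}(t) = \sum_{L=1}^N \mathrm{e}^{2\lambda_{L,p}t}\,h_L(N\mathbf{e}_k, N\mathbf{e}_k; p),
\]
i.e.\ the diagonal evaluation of the kernel polynomials weighted by the squared spectral factors.

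It then remains to insert the explicit diagonal kernel values. For $p > 0$ the formula \eqref{eq:kernel_Nek&Nek} gives directly the expression in the second branch. For $p = 0$, equation \eqref{eq:kernel_poly_Ne_k_Ne_k} yields $h_L(N\mathbf{e}_k, N\mathbf{e}_k; 0) = \binom{N}{L}(|\pmb{\mu}|/\mu_k - 1)^L$; recalling from \eqref{eq:eigenvalues_reversible} that $\lambda_{L,0} = -|\pmb{\mu}|L$, so that $\mathrm{e}^{2\lambda_{L,0}t} = (\mathrm{e}^{-2|\pmb{\mu}|t})^L$, the binomial identity $\sum_{L=0}^N\binom{N}{L}x^L = (1+x)^N$ with $x = \mathrm{e}^{-2|\pmb{\mu}|t}(|\pmb{\mu}|/\mu_k - 1)$ produces the closed form in the first branch once the $L=0$ term is subtracted.

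I do not anticipate a real difficulty: once \eqref{eq:explicit_transition_eta_t} and the diagonal kernel formulas are available, this is the standard $L^2$-spectral computation for a reversible chain. The only point requiring care is the orthonormality reduction of the double sum, where one must expand $h_L$ in a genuinely orthonormal basis so that the Kronecker delta carries unit weight — precisely the defining property of the kernel polynomials recalled before \eqref{eq:kernel_Nek&Nek}.
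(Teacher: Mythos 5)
Your proposal is correct and follows essentially the same route as the paper: the identity $\chi^2_{N\mathbf{e}_k}(t)=\sum_{L=1}^{N}\mathrm{e}^{2\lambda_{L,p}t}\,h_L(N\mathbf{e}_k,N\mathbf{e}_k;p)$ is precisely what the paper invokes (citing Eq.\ (2.1) of Khare and Zhou), after which both arguments substitute the diagonal kernel evaluations and, for $p=0$, apply the binomial theorem. The only difference is that you derive this identity from the kernel decomposition of the transition function together with orthonormality rather than citing it, which is a sound, self-contained version of the same computation.
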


\begin{proof}
	Using classical results on reversible Markov chains, see e.g.\ \cite[Eq.\ (2.1)]{khare_rates_2009}, we obtain the following equality for the chi-square distance:
	\[
		\chi^2_{N \mathbf{e}_k}( t ) = \sum_{L = 1}^N \mathrm{e}^{2 \lambda_{L,p} t} h_L(N \mathbf{e}_k, N \mathbf{e}_k; p),
	\]
	where $h(N \mathbf{e}_k, N \mathbf{e}_k; p)$ stands for the kernel polynomials associated to $\nu_{N,p}$, as defined in \eqref{eq:kernel_Nek&Nek} and \eqref{eq:kernel_poly_Ne_k_Ne_k}.
	Thus, the expression for $\chi^2_{N \mathbf{e}_k}( t )$ in \eqref{eq:kernel_polynomial_Hahn_eN} simply comes from \eqref{eq:kernel_Nek&Nek}, when $p > 0$.

	To prove the case when $p = 0$, note that \eqref{eq:kernel_poly_Ne_k_Ne_k} implies
	\begin{align*}
		\chi^2_{N \mathbf{e}_k}(t) = \sum_{L=1}^N \mathrm{e}^{- 2 L |\pmb{\mu}|  t} \binom{N}{L}\left( \frac{|\pmb{\mu}|}{\mu_k} - 1 \right)^L = \left[ 1 + \mathrm{e}^{- 2 |\pmb{\mu}|  t} \left( \frac{|\pmb{\mu}|}{\mu_k} - 1 \right) \right]^N - 1.
	\end{align*}
\end{proof}

We now take advantage of the explicit expression in (\ref{eq:kernel_polynomial_Hahn_eN}) to prove the existence of a strongly optimal cutoff in the chi-square distance for the multi-allelic Moran process with parent independent mutation when $N \rightarrow \infty$.

\begin{proof}[Proof of Theorem \ref{thm:cutoff}]
	Let us first prove the existence of the chi-square cutoff. 
	When $p = 0$, for $\displaystyle t_{N,c} = \frac{\ln N + c}{2 |\pmb{\mu}|}$ we obtain
	\begin{align*}
		\lim\limits_{N \rightarrow \infty} \chi^2_{N \mathbf{e}_k}(t_{N,c}) &= \lim\limits_{N \rightarrow \infty} \left[ 1 + \frac{\mathrm{e}^{-c}}{N} \left( \frac{|\pmb{\mu}|}{\mu_k} - 1 \right) \right]^N - 1\\
			&= \exp\left\{ - \left( \frac{|\pmb{\mu}|}{\mu_k} - 1 \right) \mathrm{e}^{-c} \right\} - 1.
	\end{align*}
	Now, since $K_{k,0} = {|\pmb{\mu}|}/{\mu_k} - 1$, we have proved the existence of the limit (\ref{eq:chi^2cutoff}) for $p = 0$.

	Now, for $p > 0$ let us focus on expression (\ref{eq:kernel_polynomial_Hahn_eN}). For every $L \in \mathbb{N}$ and $k \in \left[ K \right]$, let us denote
	\[
		\phi_{L,k}(N) := \frac{(|\pmb{\alpha}| + 2 L - 1) (|\pmb{\alpha}|)_{(L-1)} (|\pmb{\alpha}| - \alpha_k)_{(L)} }{ (|\pmb{\alpha}| + N)_{(L)} (\alpha_k)_{(L)} }.
	\]
	We thus have
	\begin{align*}
		\phi_{L,k}(N) &:= \frac{|\pmb{\alpha}| + 2 L - 1}{|\pmb{\alpha}| + L - 1} \frac{ \prod\limits_{r = 0}^{L-1} (|\pmb{\alpha}| + r) (|\pmb{\alpha}| - \alpha_k + r)}{ \prod\limits_{r = 0}^{L-1} (|\pmb{\alpha}| + N + r) (\alpha_k + r) }\\
			% &=& \frac{N|\pmb{\mu}|/p + 2 L - 1}{N|\pmb{\mu}|/p + L - 1} \frac{ \prod\limits_{r = 0}^{L-1} \left(\frac{N|\pmb{\mu}|}{p} + r\right) \left(\frac{N(|\pmb{\mu}|-\mu_k)}{p} + r \right)}{ \prod\limits_{r = 0}^{L-1} \left(\frac{N |\pmb{\mu}|}{p} + N + r \right) \left(\frac{N \mu_k}{p} + r\right) }\\
			&= \frac{N|\pmb{\mu}|/p + 2 L - 1}{N|\pmb{\mu}|/p + L - 1} \left[\frac{|\pmb{\mu}|(|\pmb{\mu}|-\mu_k)}{ \mu_k (|\pmb{\mu}| + p)} \right]^L \frac{ \prod\limits_{r = 0}^{L-1} \left(1 +\frac{p}{N|\pmb{\mu}|} r \right) \left(1 + \frac{p}{N(|\pmb{\mu}|-\mu_k)} r \right)}{ \prod\limits_{r = 0}^{L-1} \left(1 + \frac{p}{N (|\pmb{\mu}| + p)}r \right) \left(1 + \frac{p}{N \mu_k} r\right) }.
	\end{align*}
	Hence, for all $L \in \mathbb{N}$ we get
	\[
		\lim\limits_{N \rightarrow \infty} \phi_{L,k}(N) = \left[\frac{|\pmb{\mu}|(|\pmb{\mu}|-\mu_k)}{ \mu_k (|\pmb{\mu}| + p)} \right]^L = (K_{k,p})^L.
	\]
	Moreover,
	\[
		\binom{N}{L} \underrel{N }{\sim} \frac{N^L}{L!} \;\;\text{ and }\;\;
		\mathrm{e}^{2 \lambda_L t_N} \underrel{N }{\sim} \frac{(\mathrm{e}^{-c})^L}{N^L},
	\]
	where for two sequences $(f_N)$ and $(g_N)$ the notation $f_N \underrel{N }{\sim} g_N$ means ${f_N} - {g_N} = o \left(g_N\right)$.
	According to (\ref{eq:kernel_Nek&Nek}) we have
	\[
		h_L(N \mathbf{e}_k, N \mathbf{e}_k; p) = \binom{N}{L} \frac{(|\pmb{\alpha}| + 2 L - 1) (|\pmb{\alpha}|)_{(L-1)} (|\pmb{\alpha}| - \alpha_k)_{(L)} }{ (|\pmb{\alpha}| + N)_{(L)} (\alpha_k)_{(L)} }.
	\]
	Plugging these asymptotic expressions in the $L$-th summand of (\ref{eq:kernel_polynomial_Hahn_eN}) yields
	\begin{equation*}%\label{eq:limit_exp_L}
		\lim\limits_{N \rightarrow \infty} \mathrm{e}^{2 \lambda_L t_N} h_L(N \mathbf{e}_k, N \mathbf{e}_k; p)  = \frac{(K_{k,p} \, \mathrm{e}^c)^L}{L!}.
	\end{equation*}
	Moreover,
	\begin{align*}
		\mathrm{e}^{2 \lambda_L t_N} h_L(N \mathbf{e}_k, N \mathbf{e}_k; p) &\le \mathrm{e}^{-L (c + \ln(N))} h_L(N \mathbf{e}_k, N \mathbf{e}_k; p) \\
			&= \frac{\mathrm{e}^{-c L}}{N^L} \binom{N}{L} \frac{|\pmb{\alpha}| + 2L - 1}{\pmb{\alpha}| + L - 1} \frac{|\pmb{\alpha}|_{(L)} (|\pmb{\alpha}| - \alpha_k)_{(L)}}{(|\alpha| + N)_{(L)} (\alpha_k)_{(L)}}\\
			&= \frac{\mathrm{e}^{-c L}}{L !} \frac{|\pmb{\alpha}| + 2L - 1}{\pmb{\alpha}| + L - 1} \prod_{r = 0}^{L-1} \left[ \frac{N - r}{N} \frac{|\pmb{\alpha}| + r}{|\pmb{\alpha}| + N + r} \frac{|\pmb{\alpha}| - \alpha_k + r}{\alpha_k + r} \right]\\
			&\le  3 \frac{(\gamma \mathrm{e}^{-c})^L}{L !},
	\end{align*}
	where $\gamma = \max\{1, K_{k,0}\}$.

	For an arbitrary small $\epsilon > 0$ let us consider $M \in \mathbb{N}$ such that
	\( \displaystyle
		3 \sum_{L = M+1}^{\infty} \frac{(\gamma \mathrm{e}^{-c})^L}{L!} \le \frac{\epsilon}{3},
	\)
	and let $N_{\epsilon}$ be a positive integer such that
	\[
		\left| \sum_{L = 1}^M \mathrm{e}^{2 \lambda_L t_N} h_L(N \mathbf{e}_k, N \mathbf{e}_k; p) -  \sum_{L = 1}^M \frac{(K_{k,p})^L}{L!} \right| \le \frac{\epsilon}{3},
	\]
	for all $N \ge N_{\epsilon}$.
	Note that
	\[
		\sum_{L = M+1}^{\infty} \frac{(K_{k,p} \mathrm{e}^{-c})^L}{L!} \le \frac{\epsilon}{3}.
	\]
	Then, for all $N \ge N_{\epsilon}$, using the triangular inequality we have
	\[
		\left| \sum_{L = 1}^N \mathrm{e}^{2 \lambda_L t_N} h_{L}(N \mathbf{e}_k) - \left(\exp\{ K_{k,p} \mathrm{e}^{-c} - 1 \} \right)  \right|
			\le \epsilon,
	\]
	which concludes the proof for the chi-square cutoff for the process driven by $\mathcal{L}_{N,p}$, for $p \ge 0$.
	\end{proof}

	Let us establish a result that will be very useful during the proof of Theorem \ref{thm:TVcutoff}. 
\begin{lemma}[Lemma A.2 in \cite{2020arXiv200513437N}]\label{lemma:LemmaA2}
	Let $\psi_N \in (0,1)$, for all $N \in \mathbb{N}$, such that $N \psi_N \rightarrow \infty$, when $N \rightarrow \infty$. Then, for all $y \in \mathbb{R}$ we have
	\[
		\lim\limits_{N \rightarrow \infty} \operatorname{d}^{\mathrm{TV}} \left( \operatorname{Bin}(N,\psi_N), \operatorname{Bin}\left(N, \psi_N + \sqrt{\frac{\psi_N(1 - \psi_N)}{N} } y  \right) \right) = 2 \Phi\left( \frac{1}{2} |y| \right) - 1,
	\]
	where where $\operatorname{Bin}(N,\psi)$ stands for the binomial distribution with $N$ trials and probability of success $\psi$, and $\Phi$ is the cumulative distribution function of the standard normal distribution, i.e.
	\[
		\Phi: t \mapsto \int_{- \infty}^{t} \frac{1}{\sqrt{2 \pi}} \mathrm{e}^{-s^2 / 2} \mathrm{d}s.
	\]
\end{lemma}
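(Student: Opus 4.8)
The plan is to compare each binomial point mass to a Gaussian density by means of a local central limit theorem, to convert the total variation sum into a Riemann sum, and to recognise the limit as the total variation distance between two unit-variance Gaussians whose means differ by $y$. Throughout write $\psi_N' := \psi_N + \sqrt{\psi_N(1-\psi_N)/N}\,y$, $\sigma_N := \sqrt{N\psi_N(1-\psi_N)}$ and $\sigma_N' := \sqrt{N\psi_N'(1-\psi_N')}$, and let $\phi(u) := \frac{1}{\sqrt{2\pi}}\,\mathrm{e}^{-u^2/2}$ denote the standard normal density. Two elementary observations drive the argument: the means differ by exactly $N\psi_N' - N\psi_N = \sigma_N\, y$, so that measured in units of the common standard deviation the shift equals $y$; and $\psi_N' - \psi_N \to 0$, whence $\sigma_N'/\sigma_N \to 1$ and $\sigma_N \to \infty$ under the hypothesis (which forces $N\psi_N(1-\psi_N) \to \infty$, with $\psi_N'$ remaining a valid probability).

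First I would invoke the uniform local de Moivre\,--\,Laplace theorem: as $\sigma_N \to \infty$,
\[
	\sup_{k} \left| \sigma_N\, \mathbb{P}\big[\operatorname{Bin}(N,\psi_N) = k\big] - \phi\!\left(\tfrac{k - N\psi_N}{\sigma_N}\right) \right| \longrightarrow 0,
\]
and the analogous statement with $\psi_N'$ in place of $\psi_N$, where replacing $\sigma_N'$ by $\sigma_N$ in the Gaussian costs only a vanishing error because $\sigma_N'/\sigma_N \to 1$ and $\phi$ is uniformly continuous. This is standard and follows from Stirling's formula (cf.\ Petrov's local limit theorems). Substituting these approximations into
\[
	\operatorname{d}^{\mathrm{TV}}\big(\operatorname{Bin}(N,\psi_N), \operatorname{Bin}(N,\psi_N')\big) = \frac{1}{2}\sum_k \left| \mathbb{P}\big[\operatorname{Bin}(N,\psi_N)=k\big] - \mathbb{P}\big[\operatorname{Bin}(N,\psi_N')=k\big] \right|
\]
turns the right-hand side into
\[
	\frac{1}{2\sigma_N}\sum_k \left| \phi\!\left(\tfrac{k-N\psi_N}{\sigma_N}\right) - \phi\!\left(\tfrac{k-N\psi_N'}{\sigma_N}\right) \right| + o(1),
\]
which, since $(N\psi_N' - N\psi_N)/\sigma_N = y$, is a Riemann sum of mesh $1/\sigma_N \to 0$ for the integral $\frac{1}{2}\int_{\mathbb{R}} |\phi(u) - \phi(u-y)|\,\mathrm{d}u$.

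It then remains to evaluate $\frac{1}{2}\int_{\mathbb{R}}|\phi(u) - \phi(u-y)|\,\mathrm{d}u$, which is precisely the total variation distance between the standard Gaussian and its translate by $y$. The two densities cross at $u = y/2$, so integrating the positive part of the difference gives $2\Phi(|y|/2) - 1$, the asserted limit.

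The hard part will be the uniform control of the local-CLT error once it is summed over $k$: the number of relevant values of $k$ is of order $\sigma_N \to \infty$, so a uniform $o(1)$ bound on each individual term does not by itself suffice, and one must prevent the errors from accumulating. I would handle this by splitting the sum at $|k - N\psi_N| = A\,\sigma_N$. On the central window $|k - N\psi_N| \le A\,\sigma_N$ the uniform local limit theorem controls the finitely-many-standard-deviations range, and the corresponding Riemann sum converges to $\frac{1}{2}\int_{-A}^{A}|\phi(u)-\phi(u-y)|\,\mathrm{d}u$, which approximates the full integral to within any prescribed $\varepsilon$ once $A$ is large. On the tails $|k - N\psi_N| > A\,\sigma_N$ I would bound both binomial masses by Chernoff\,--\,Bernstein tail estimates that are uniform in $N$, again using $\sigma_N \to \infty$, so that the tail contribution is below $\varepsilon$ uniformly in $N$ for $A$ fixed large. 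Letting first $N \to \infty$ and then $A \to \infty$ yields the claim. (In the application $\psi_N$ remains bounded away from $0$ and $1$, so both $\sigma_N \to \infty$ and $\psi_N' \in (0,1)$ hold automatically.)
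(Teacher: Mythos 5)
The paper does not actually prove this lemma: it is quoted from Nestoridi and Olesker\,--\,Taylor, and the sentence immediately following the statement defers the proof to Appendix A.2 of \cite{2020arXiv200513437N}, so there is no internal argument to compare yours against. On its own terms your outline is the standard and essentially correct one: a uniform local de Moivre\,--\,Laplace approximation on a window of $A$ standard deviations, a Riemann-sum passage to one half of the $L^1$ distance between the standard Gaussian density and its translate by $y$, Chebyshev or Chernoff control of the tails uniformly in $N$, and the crossing-point computation at $u=y/2$ giving $2\Phi(|y|/2)-1$. You correctly isolate the one delicate point --- that the $O(\sigma_N)$ summands, each carrying an $o(1)/\sigma_N$ local-CLT error, contribute $O(A)\cdot o(1)$ on the window --- and resolve it by the split at $|k-N\psi_N|=A\sigma_N$. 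A slightly lighter route, avoiding local limit theorems altogether, uses that two binomials with the same $N$ have a monotone likelihood ratio, so their pmf difference changes sign exactly once, at $k^{*}\approx N\psi_N+\sigma_N y/2$; the total variation distance is then a difference of two CDFs evaluated at $k^{*}$, and the integral CLT (Berry\,--\,Esseen) gives the limit directly.

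There is, however, one genuine error in your write-up: the parenthetical claim that the hypothesis $N\psi_N\to\infty$ ``forces $N\psi_N(1-\psi_N)\to\infty$'' is false. Take $\psi_N=1-1/N$: then $N\psi_N\to\infty$ but $\sigma_N^2\to 1$, the failure counts of the two binomials converge to two distinct Poisson laws, the stated Gaussian limit fails, and for $|y|>1$ the parameter $\psi_N'$ eventually exits $(0,1)$ so the second binomial is not even defined. Since your entire argument rests on $\sigma_N\to\infty$, the lemma as literally stated needs the stronger hypothesis $N\psi_N(1-\psi_N)\to\infty$ (or $\psi_N$ bounded away from $1$). You do observe at the very end that in the application $\psi_N=\mu_k/|\pmb{\mu}|$ is a constant in $(0,1)$, which repairs everything where the lemma is invoked; but that observation should be promoted from a parenthesis to an explicit standing assumption, because the implication you assert at the outset does not hold.
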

This lemma characterises the limit profile of the total variation distance between two random variables $B_1$ and $B_2$, following binomial distributions, when the difference between their means is of the same order of the standard deviation of $B_1/N$. 
The proof can be found in the Appendix A.2 \cite{2020arXiv200513437N}, see also  \cite{2021Nestoridi}, the published version of this paper.

	\begin{proof}[Proof of Theorem \ref{thm:TVcutoff}]

	First note that the lower and upper bounds for $\mathrm{d}^{\mathrm{TV}}_{N \mathbf{e}_k} \left( t_{N,c} \right)$ are simply consequences of Theorems \ref{thm:lower_bound} and \ref{thm:cutoff}, respectively.
	Indeed, for $c < 0$ and using Theorems \ref{thm:lower_bound} we have 
	\[
		\lim\limits_{N \rightarrow \infty} \mathrm{d}^{\mathrm{TV}}_{N \mathbf{e}_k} \left( t_{N,c} \right) \ge 1 - \kappa \frac{\|V\|_{\infty}}{v_k} \mathrm{e}^{-c},
	\]
where $\kappa = 8(2 + \|Q_{\pmb{\mu}}\|_{\infty}  / |\pmb{\mu}|) = 32$, and $V$ is any right eigenvector of $Q_{\pmb{\mu}}$ with eigenvalue $|\pmb{\mu}|$.
Finally, the desired inequality is obtained considering the eigenvector $V = 1/\mu_k \mathbf{e}_k - 1/\mu_s \mathbf{e}_s$, where $s \in [K]$ satisfies $\mu_s \wedge \mu_k = \min\limits_{r: r\neq k} \mu_r \wedge \mu_k$.

In addition, using the classical inequality between the chi-square and the total variation distances and Theorem \ref{thm:cutoff} we get 
\[
	\lim\limits_{N \rightarrow \infty} \mathrm{d}^{\mathrm{TV}}_{N \mathbf{e}_k} \left( t_{N,c} \right) \le \lim\limits_{N \rightarrow \infty} \frac{1}{2}\sqrt{\chi^2_{N \mathbf{e}_k} \left( t_{N,c} \right)} = \frac{1}{2} \sqrt{ {\exp \{ K_{k,p} \mathrm{e}^{-c} \} - 1} }.
\]
This concludes to proof of the existence of the $\displaystyle \left(\frac{\ln N}{2 |\pmb{\mu}|}, 1\right)$ total variation cutoff.

Let us now prove the limit profile for the total variation distance when $p = 0$.
Using (\ref{eq:kernel_poly_Ne_k}) and (\ref{eq:explicit_transition_eta_t}) we get
\begin{align*}
	(\mathrm{e}^{t \mathcal{L}_{N}} \delta_{\xi}) ({N \mathbf{e}_k}) &=  \nu_{N}(\xi) \sum_{L = 0}^N \mathrm{e}^{- |\pmb{\mu}| L t}  \sum_{m=0}^L \binom{N}{m}\binom{N-m}{L - m} (-1)^{L-m} \frac{(\xi_k)_{[m]}}{N_{[m]}} \left( \frac{\mu_k}{|\pmb{\mu}|} \right)^{-m} \\
	&= \nu_{N}(\xi) \sum_{m = 0}^N  \binom{N}{m} \frac{(\xi_k)_{[m]}}{N_{[m]}} \left( \frac{\mu_k}{|\pmb{\mu}|} \right)^{-m} \sum_{L = m}^N \mathrm{e}^{- |\pmb{\mu}| L t} \binom{N-m}{L - m} (-1)^{L-m} \\
	%&= \nu_{N}(\xi) \sum_{m = 0}^N  \binom{N}{m} \frac{(\xi_k)_{[m]}}{N_{[m]}} \left( \frac{\mu_k}{|\pmb{\mu}|} \right)^{-m} \mathrm{e}^{- |\pmb{\mu}| m t} \sum_{L = 0}^{N - m} \mathrm{e}^{- |\pmb{\mu}| L t} \binom{N-m}{L} (-1)^{L}\\
	&= \nu_{N}(\xi) \sum_{m = 0}^N  \binom{N}{m} \frac{(\xi_k)_{[m]}}{N_{[m]}} \left( \frac{\mu_k}{|\pmb{\mu}|} \right)^{-m} \mathrm{e}^{- |\pmb{\mu}| m t} (1 - \mathrm{e}^{- |\pmb{\mu}| t})^{N-m}\\
	&= \nu_{N}(\xi) (1 - \mathrm{e}^{- |\pmb{\mu}| t})^N \sum_{m = 0}^{\xi_k} \binom{\xi_k}{m}  \left[ \frac{\mu_k}{|\pmb{\mu}|} \mathrm{e}^{|\pmb{\mu}| t} (1 - \mathrm{e}^{- |\pmb{\mu}| t})   \right]^{-m}\\
%		&= \nu_{N}(\xi) (1 - \mathrm{e}^{- |\pmb{\mu}| t})^N \left[ 1 + \frac{|\pmb{\mu}|}{\mu_k  (\mathrm{e}^{ |\pmb{\mu}| t} - 1) }   \right]^{\xi_k}\\
	&= \nu_{N}(\xi) (1 - \mathrm{e}^{- |\pmb{\mu}| t})^{N- \xi_k} \left[ (1 - \mathrm{e}^{- |\pmb{\mu}| t}) + \frac{ |\pmb{\mu}| \mathrm{e}^{- |\pmb{\mu}| t}}{\mu_k }   \right]^{\xi_k}.
\end{align*}
Thus, the process driven by $\mathcal{L}_{N}$ starting at $N \mathbf{e}_k$ at time $t$ follows a $\mathcal{M}\left( \cdot \mid N, (1 - \mathrm{e}^{- |\pmb{\mu}| t}) \frac{\pmb{\mu}}{|\pmb{\mu}|} + \mathrm{e}^{- |\pmb{\mu}| t} \mathbf{e}_k \right)$ distribution, which proves Corollary \ref{corol:explicitLaw_t}.
Moreover,
\begin{align*}
	\operatorname{d}^{\mathrm{TV}}_{N \mathbf{e}_k} \left( t \right) &= \frac{1}{2} \sum_{\xi \in \mathcal{E}_{K,N}} \left| (\mathrm{e}^{t \mathcal{L}_{N}} \delta_{\xi})(N \mathbf{e}_k) - \nu_{N}(\xi)\right| \\
%		&= \frac{1}{2} \sum_{\xi \in \mathcal{E}_{K,N}} \nu_{N}(\xi) \left|(1 - \mathrm{e}^{- |\pmb{\mu}| t})^{N - \xi_k} \left[ 1 - \mathrm{e}^{- |\pmb{\mu}| t} + \frac{ |\pmb{\mu}| \mathrm{e}^{ - |\pmb{\mu}| t} }{\mu_k}   \right]^{\xi_k} - 1\right|\\
		&= \frac{1}{2} \sum_{L = 0}^N \sum_{ \substack{ {\xi \in \mathcal{E}_{K,N}:} \\ {\xi_k = L}}  } \nu_{N}(\xi) \left|(1 - \mathrm{e}^{- |\pmb{\mu}| t})^{N - \xi_k} \left[ 1 - \mathrm{e}^{- |\pmb{\mu}| t} + \frac{ |\pmb{\mu}| \mathrm{e}^{ - |\pmb{\mu}| t} }{\mu_k}   \right]^{\xi_k} - 1\right|\\
		&= \frac{1}{2} \sum_{L = 0}^N \binom{N}{L} \left( \frac{\mu_k}{|\pmb{\mu}|} \right)^L \left( 1 - \frac{\mu_k}{|\pmb{\mu}|} \right)^{N-L} \left|(1 - \mathrm{e}^{- |\pmb{\mu}| t})^{N - L} \left[ 1 - \mathrm{e}^{- |\pmb{\mu}| t} + \frac{ |\pmb{\mu}| \mathrm{e}^{ - |\pmb{\mu}| t} }{\mu_k}    \right]^{L} - 1\right| \\
		&= \operatorname{d}^{\mathrm{TV}} \left( \operatorname{Bin}\left(N , \frac{\mu_k}{|\pmb{\mu}|} \right) , \operatorname{Bin}\left(N, \frac{\mu_k}{|\pmb{\mu}|} (1 - \mathrm{e}^{- |\pmb{\mu}| t}) + \mathrm{e}^{ -|\pmb{\mu}| t} \right) \right).
\end{align*}
Then, we have proved that we can write $\operatorname{d}^{\mathrm{TV}} \left( t, N \mathbf{e}_k \right)$ as the total variation distance between two binomial distributions with parameters $N$ both and probabilities of success $\pi_k = \mu_k/|\pmb{\mu}|$ and $\tilde{\pi}_k = \pi_k(1 - \mathrm{e}^{- |\pmb{\mu}| t}) + \mathrm{e}^{ -|\pmb{\mu}| t}$, respectively. For $t_{N,c} = \frac{\ln N + c}{2 |\mu|}$ we get
\[
	\tilde{\pi}_k = \pi_k + \frac{\sqrt{\pi_k(1- \pi_k)} }{\sqrt{ N}} \sqrt{ \frac{1-\pi_k}{\pi_k} } \mathrm{e}^{-c/2} .
\]
Therefore, using Lemma \ref{lemma:LemmaA2} we obtain
\[
	\lim\limits_{N \rightarrow \infty} \operatorname{d}_{N \mathbf{e}_k}^{\mathrm{TV}} \left( t \right) = 2 \Phi\left( \frac{1}{2} \sqrt{  K_{k, 0} \mathrm{e}^{-c} } \right) - 1,
\]
where $K_{k, 0} = \frac{1-\pi_k}{\pi_k} = \frac{|\pmb{\mu}|}{\mu_k} - 1$.

\end{proof}

\appendix

\section{Proofs of Lemmas \ref{thm:interp} and \ref{thm:chiV}, and Proposition \ref{propo:basis}}\label{appendix:two_model}

This section is devoted to the proofs of Lemmas \ref{thm:interp} and \ref{thm:chiV}, and Proposition \ref{propo:basis}.

\begin{proof}[Proof of Lemma \ref{thm:interp}]
(a) Let us first prove that for any $\alpha \in \mathcal{E}_{K,N}$, there exists a unique polynomial
$P_{\alpha} \in H_{K,N}$, product of $N$ linear functions on $H_{K,1}$, such that
$P_{\alpha}(\eta) = 1$ if $\eta=\alpha$ and $0$ otherwise.
Indeed, let us define the polynomial $P_{\alpha}$ by
\begin{equation*} % \label{eq:palpha}
	P_{\alpha}: \mathbf{x} \in \mathcal{E}_{K,N} \mapsto \displaystyle \prod_{k=1}^K \prod_{a=0}^{\alpha_k-1} \frac{x_k - a}{\alpha_k - a},
\end{equation*}
where $\prod_{a=0}^{\alpha_k-1} (x_k - a) = 1$ when
$\alpha_k=0$.
Note that $P_{\alpha} = \mathbf{1}_{\alpha}$, for every $\alpha \in \mathcal{E}_{K,N}$. 
There are $\sum_{k=1}^K \alpha_k = N$ linear factors in the numerator. Also,
each term $x_k - a$ may be replaced by $x_k - \frac{a}{N} \sum_{k=1}^{K} x_k$ when $a \neq 0$,
so $P_{\alpha}(x)$ may be considered as a product of $N$ linear functions on $H_{K,1}$,
and because the uniqueness of such a function $P_{\alpha}$ is straightforward, (a) is proved.

Now, for every real function $f$ on $\mathcal{E}_{K,N}$, the result is immediately obtained  from (a) by setting
\[
	P := \sum_{\alpha \in \mathcal{E}_{K,N}} f(\alpha) P_{\alpha}.
\]

(b) From part (b) we have that $\mathcal{B}_{H_{K,N}}$ is a generator system of $\mathbb{R}^{\mathcal{E}_{K,N}}$. Moreover, 
\[
	\operatorname{Card} (\mathcal{B}_{H_{K,N}}) = \operatorname{Card} (\mathcal{E}_{K,N}) = \dim (\mathbb{R}^{\mathcal{E}_{K,N}}) = \binom{K - 1 + N}{N},
\] 
thus $\mathcal{B}_{H_{K,N}}$ is necessarily a basis of $\mathbb{R}^{\mathcal{E}_{K,N}}$.
\end{proof}

\begin{proof}[Proof of Lemma \ref{thm:chiV}]
(a) For $L=1$: An injection $s : \{1\} \rightarrow \{1,2,\dots,N\}$ is characterised by 
$s(1)=i$. It follows from (\ref{def:xi}) that
$$
\xi(V_1)(k_1,k_2,\dots,k_N) = \sum_{i = 1}^N V_1(k_i),
$$
which is a symmetric function.
For every $\eta = (\eta(1),\eta(2),\dots,\eta(K)) \in \mathcal{E}_{K,N}$,
we have
\[
	\tilde{\xi}(V_1)(\eta) = \big( \xi(V_1) \circ \psi_{K,N} \big) (\eta) = \sum_{j=1}^K V_1(j) \eta(j),
\]
which finishes the proof of part (a).

(b) From (\ref{def:xi}), we get
\begin{align}
\xi(V_1,V_2,\dots,V_L) (k_1,\dots,k_N) &= \sum_{s \in \mathcal{I}_{L-1,N}}
V_1(k_{s(1)}) \dots V_{L-1}(k_{s(L-1)})
\sum_{i \in \left[ N \right] \setminus s(\left[ L-1 \right]) } V_L(k_i) 
  \nonumber \\
&= \sum_{s \in \mathcal{I}_{L-1,N}}
V_1(k_{s(1)}) \dots V_{L-1}(k_{s(L-1)})
\left( \sum_{i=1}^N V_L(k_i) - \sum_{i=1}^{L-1} V_L(k_{s(i)}) \right)
 \nonumber \\
&= \xi(V_1,V_2,\dots,V_{L-1})(k_1,\dots,k_N) \xi(V_L)(k_1,\dots,k_N) \nonumber \\
& \;\;\;\; - \sum_{i=1}^{L-1} \xi(V_1,\dots,V_{i} \odot V_{L},\dots, V_{L-1})(k_1,\dots,k_N). \nonumber
\end{align}
Using (\ref{carac:propchi}) we obtain the result for $\tilde{\xi}(V_1,V_2,\dots,V_L)$.
The particular case $L=2$ comes from part (a).

(c) We can prove equation (\ref{eq:prodxi}) by induction on $L$.
For $L=1$ the result easily comes by (a). If we suppose that (\ref{eq:prodxi}) is satisfied for $L$, for $2 \leq L < N-1$, then, using (b) and (a), we can check that
(\ref{eq:prodxi}) holds for $L+1$.
\end{proof}

\begin{proof}[Proof of Proposition \ref{propo:basis}]
	
	Since $\mathcal{U}$ is a basis of $\mathbb{R}^K$ we trivially have that $\mathcal{U}^N$ is a basis of $\mathbb{R}^{\left[ K \right]^N}$, proving (a) (cf.\ Lemma 12.12 in \cite{levin_markov_2017}). 
	To prove (b) we prove that each element of $\mathcal{U}^N$ has image in $\mathcal{S}^N$ by $\operatorname{Sym}$, defined as in (\ref{def:sim_operator}). 
	First, $\operatorname{Sym}(U_0 \otimes \dots \otimes U_0) = U_0 \otimes \dots \otimes U_0$, since the constant function equal to one is symmetric. 
	Furthermore, for every $W = W_1 \otimes W_2 \otimes \dots \otimes W_N \in \mathcal{U}^N$ there is a permutation $\sigma \in \mathcal{S}_N$ such that $\sigma W = U_{\eta}$, with $\eta \in \mathcal{E}_{K-1,L}$, where $L \in \left[ N \right]$ is the number of components in the expression of $W$ different from $U_0$. 
	Thus, $\operatorname{Sym}(W) = \operatorname{Sym}(\sigma W) = V_{\eta}$, for $\eta \in \mathcal{E}_{K-1,L}$. 
	We have not proved that $V_{\eta} \neq V_{\alpha}$, for $\eta \neq \alpha$. 
	However, $\mathcal{S}^N$ is a generator system of $\operatorname{Sym}(\mathbb{R}^{\left[ K \right]^N})$ satisfying
		\begin{align*}
			\operatorname{Card}\left(\mathcal{S}^N\right) &\le 1 + \sum_{L = 1}^{N} \operatorname{Card}(\mathcal{E}_{K-1,L})\\
				&= \sum_{L = 0}^{N} \binom{K -2 + L }{L} = \binom{K - 1 + N }{N},
		\end{align*}
	where the last equality is the well-known \emph{Hockey\,--\,Stick identity} in combinatorics, see e.g.\ \cite{MR1952453}.
	Now, since 
	\[
	\dim\left(\operatorname{Sym}\left(\mathbb{R}^{{\left[ K \right]}^N}\right)\right) = \binom{K - 1 + N }{N},
	\] 
	we have that $\mathcal{S}^N$ is a generator system with a minimal number of vectors, therefore it is a basis of $\operatorname{Sym}(\mathbb{R}^{\left[ K \right]^N})$. To prove $(c)$ simply note that each element in $\tilde{\mathcal{S}}^N$ is the image by the isomorphism $\Phi_{K,N}$ of an element in $\mathcal{S}^N$. 
\end{proof}

\section{Proof of Lemma \ref{lemma:specA}}\label{section:proof_lemma_specA}

\begin{proof}[Proof of Lemma \ref{lemma:specA}]
Without lost of generality we can only prove the result for the monomials on  $\mathcal{E}_{K,N}$.
Consider $m$ a monomial on $\mathcal{E}_{K,N}$ of total degree
$|\alpha| = L$ with $0\leq L \leq N$.
Then, we want to prove that
\begin{equation*}
\mathcal{A}_{N} V_m = - L(L-1) V_m + V_q,
\end{equation*}
where $q$ is a polynomial with a total degree strictly less than $L$.

As we commented in Remark \ref{remark:1and2degree_eigenAKN}, the result is true for $L = 1$. Let us assume $L \ge 2$ and consider the monomial $m: \eta \mapsto \prod_{r=1}^K \eta(r)^{\alpha_r}$. 
Evaluating $V_m$ in $\mathcal{A}_{N}$, defined by (\ref{def:reproduction}), we obtain
\begin{equation} \label{eq:AVp}
(\mathcal{A}_{N} V_m)(\eta) = \sum_{k,r: k\neq r } \left( \prod_{s \notin \{k, r\}  } \eta(s)^{\alpha_s}\right)
\left[ (\eta(k) -1)^{\alpha_k} (\eta(r)+1)^{\alpha_r} - \eta(k)^{\alpha_k} \eta(r)^{\alpha_r}
\right] \eta(k) \eta(r),
\end{equation}
for all $\eta \in \mathcal{E}_{K,N}$.
Then, from the Newton's binomial formula, we get
\begin{equation*} % \label{eq:term1}
\eta(k) (\eta(k) -1)^{\alpha_k} = \eta(k)^{\alpha_k+1} - \alpha_k \eta(k)^{\alpha_k}
+ \frac{\alpha_k (\alpha_k-1)}{2} \eta(k)^{\alpha_k-1} + a(\eta(k)), 
\end{equation*}
where $a(\eta(k))$ is a polynomial in $\eta(k)$ with degree strictly less than $\alpha_k -1$
if $\alpha_k \geq 2$ and null otherwise.
In the same way, we get
\begin{equation*} %\label{eq:term2}
\eta(r) (\eta(r) +1)^{\alpha_r} = \eta(r)^{\alpha_r+1} + \alpha_r \eta(r)^{\alpha_r}
+ \frac{\alpha_r (\alpha_r-1)}{2} \eta(r)^{\alpha_r-1} + b(\eta(r)), 
\end{equation*}
where $b(\eta(r))$ is a polynomial in $\eta(r)$ with degree strictly less than $\alpha_r -1$
if $\alpha_r \geq 2$ and null otherwise.

Using this expansion in (\ref{eq:AVp}) and regrouping
terms with total degree in $\eta(k)$
and $\eta(r)$ strictly less than $\alpha_k + \alpha_r$ give
\begin{align}
(\mathcal{A}_{N} V_m)(\eta) &= \sum_{k,r: k \neq r} \left( \prod_{s \notin \{k, r\}  } \eta(s)^{\alpha_s}\right)
 ( \alpha_r \eta(k)^{\alpha_k+1} \eta(r)^{\alpha_r}
 - \alpha_k \eta(k)^{\alpha_k} \eta(r)^{\alpha_r+1} )  \nonumber \\
 & \;\;\;\; + \sum_{k,r: k \neq r} \left( \prod_{s \notin \{k, r\}  } \eta(s)^{\alpha_s} \right)
   \frac{\alpha_r (\alpha_r-1)}{2}  \eta(k)^{\alpha_k+1}  \eta(r)^{\alpha_r-1} \nonumber  \\
 & \;\;\;\; - \sum_{k,r: k\neq r} \left( \prod_{s \notin  \{k,  r\}} \eta(s)^{\alpha_s} \right) \alpha_k \alpha_r 
   \eta(k)^{\alpha_k} \eta(r)^{\alpha_r} \label{eq:AVp2} \\
 & \;\;\;\; + \sum_{k,r: k \neq r} \left( \prod_{s \notin \{k, r\}}\eta(k)^{ \alpha_s} \right)
   \frac{\alpha_k (\alpha_k-1)}{2}  \eta(k)^{\alpha_k-1}  \eta(r)^{\alpha_r+1}  + w(\eta),\nonumber 
\end{align}
where $w$ is a polynomial in $\eta$ of total
degree strictly less than $\sum_k \alpha_k = L$.
The first sum in the right member of (\ref{eq:AVp2}) is null because
% may be written
% \begin{equation*}
% \sum_{k,r: k\neq r} \left( \prod_{s \notin \{k,r\} } \eta(s)^{\alpha_s}\right)
%   \alpha_r \eta(k)^{\alpha_k+1} \eta(r)^{\alpha_r} = p(\eta) 
%  \sum_{k,r: k\neq r} (\alpha_r \eta(k) - \alpha_k \eta(r) ) = 0,
% \end{equation*}
the antisymmetry in $k,r$ of its summands.
The third term is
\begin{equation*}
- \sum_{k,r: k \neq r} \left( \prod_{s \notin \{k,r\}} \eta(s)^{\alpha_s} \right) \alpha_k \alpha_r 
   \eta(k)^{\alpha_k} \eta(r)^{\alpha_r} = - c_1 p(\eta),
\end{equation*}
with 
\begin{equation*}
c_1 = \sum_{k,r: k \neq r} \alpha_k \alpha_r = \left(\sum_{k = 1}^K \alpha_k\right)^2 - \sum_{k = 1}^K \alpha_k^2 = L^2 - \sum_{k = 1}^K \alpha_k^2.
\end{equation*}

By symmetry in $k$ and $r$, it is obvious that the second and the fourth sums in the right member
of (\ref{eq:AVp2}) are equal.
Using
\begin{equation*}
\alpha_r (\alpha_r -1) \eta(r)^{\alpha_r-1} = \eta(r) \frac{\partial^2}{\partial \eta(r)^2} \eta(r)^{\alpha_r},
\end{equation*}
it follows that
\begin{align*}
\sum_{ k \neq r} \left( \prod_{ s \notin \{k,r\} } \eta(s)^{\alpha_s} \right)
   \alpha_r (\alpha_r-1)  &\eta(k)^{\alpha_k+1}  \eta(r)^{\alpha_r-1} =
   \sum_{k,r: k \neq r} \eta(k) \eta(r) \frac{\partial^2}{\partial \eta(r)^2} m(\eta) \\
    &= \sum_{k,r} \eta(k) \eta(r) \frac{\partial^2}{\partial \eta(r)^2} m(\eta)
    - \sum_{ r =1}^K \eta(r)^2 \frac{\partial^2}{\partial \eta(r)^2} m(\eta) \\
    &= N  \sum_{r=1}^K \eta(r) \frac{\partial^2}{\partial \eta(r)^2} m(\eta) 
  - \sum_{r=1}^K \eta(r)^2 \frac{\partial^2}{\partial \eta(r)^2} m(\eta).
\end{align*}
The first summand in the last equality is an homogeneous polynomial of degree $L-1$ and the second one satisfies
\begin{equation*}
- \sum_{r=1}^K \eta(r)^2 \frac{\partial^2}{\partial \eta(r)^2} m(\eta) = - c_2 \; m(\eta),
\end{equation*}
with
\begin{equation*}
c_2 = \sum_{r=1}^K \alpha_r (\alpha_r -1) = \sum_{r=1}^K \alpha_r^2 - L.
\end{equation*}
As a conclusion, it comes from (\ref{eq:AVp2}) that
\begin{equation*}
\mathcal{A}_{N} V_m = -(c_1+c_2) V_m + V_q = -L(L-1) V_m + V_q,
\end{equation*}
where $q$ is a polynomial of total degree strictly less than $L$, which proves (a).

% Now, a polynomial $P$ of total degree $L$ may be written,
% in a unique way as
% $
% P = P_L + R,
% $
% where $P_L$ is a non-null homogeneous polynomial of degree $L$
% and $R$ is a polynomial
% of total degree strictly less than $L$.
% Note that $P_L = \sum_{i \in I} {m}_i$, where $I$ is finite and ${m}_i$ is a monomial of  degree $L$, for $i \in I$. Thus, (a) implies that
% $\mathcal{A}_{N} V_{ {m}_i} = -L(L-1) V_{ {m}_i} + V_{q_i}$, for all $i \in I$ where $q_i$ has a total degree strictly
% less than $L$.
% Also, $\mathcal{A}_{N} V_R = V_{R'}$ where $R'$ is a polynomial with total degree strictly less than $L$.
% Therefore,
% \begin{eqnarray}
% \mathcal{A}_{N} V_p &=& \sum_{i \in I} \mathcal{A}_{N} V_{ {m}_i}  + \mathcal{A}_{N} V_R \nonumber \\
% &=& - L(L-1) \sum_{i \in I} V_{{m}_i}  + \sum_{i \in I} V_{q_i} +   V_{R'} \nonumber \\
% &=& - L(L-1) V_P + V_{R''}, \nonumber
% \end{eqnarray}
% where $R''$ has total degree strictly less than $L$.

\end{proof}

\section{Proof of Lemma \ref{thm_reversible_distrib}}\label{appendix:poof_reversibility}

First we prove Lemma \ref{lemma:rever_complete} showing that the neutral multi-allelic Moran process driven by $\mathcal{Q}_{N,p}$ is reversible if and only if its mutation rate matrix can be written in the form of $Q_{\pmb{\mu}}$, given by (\ref{eq:def:Qrev_matrix}). 
We start by proving that when the neutral multi-allelic Moran process is reversible, then all the entries of the mutation matrix are positive and it can be written in the form of $Q_{\pmb{\mu}}$, i.e.\ the \emph{``only if part''}. 
Later, in Lemma \ref{thm_reversible_distrib_gnral} we prove that the process driven by $\mathcal{L}_{N,p}$ is reversible and we provide the explicit expression for its stationary distribution, i.e.\ we prove the \emph{``if part''}. Actually, the results in Lemma \ref{thm_reversible_distrib_gnral} are proved for a more general Moral model with \emph{selection at birth}.

\begin{lemma}\label{lemma:rever_complete}
If the process driven by the generator (\ref{def:generatorQKNp}) is reversible, then $\mu_{i,j} = \mu_j > 0$, for all $i \in \left[ K \right]$, and every $j \in \left[ K \right]$, $j \neq i$.
\end{lemma}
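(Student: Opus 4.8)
The plan is to reduce the reversibility hypothesis to \textbf{Kolmogorov's cycle criterion} and then to extract the parent--independence from a carefully chosen family of triangular cycles. Since $Q$ is assumed irreducible throughout, the process driven by $\mathcal{Q}_{N,p}$ is itself irreducible on the finite space $\mathcal{E}_{K,N}$ (a single individual can be carried between any two types along a positive--rate $Q$--path, leaving the others fixed), so it admits a unique, strictly positive stationary distribution $\pi$. For an irreducible finite continuous--time chain, reversibility is equivalent to detailed balance with respect to $\pi$, and hence to Kolmogorov's cycle condition (see e.g.\ \cite{bremaud_markov_2020}). Throughout I will use that the jump rate of $\mathcal{Q}_{N,p}$ from $\eta$ to $\eta-\mathbf{e}_i+\mathbf{e}_j$, read off from (\ref{def:generatorQKNp}), equals $\eta(i)\bigl(\mu_{i,j}+\tfrac pN\eta(j)\bigr)$.

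First I would establish \emph{positivity of all off--diagonal entries}. Fix $i\neq j$ and take the configuration $\eta=N\mathbf{e}_i$, so that $\eta'=\eta-\mathbf{e}_i+\mathbf{e}_j=(N-1)\mathbf{e}_i+\mathbf{e}_j$. The forward rate is $N\mu_{i,j}$, while the reverse rate is $\mu_{j,i}+\tfrac pN(N-1)\ge \tfrac pN(N-1)>0$ because $p>0$ and $N\ge 2$. Detailed balance $\pi(\eta)\,c(\eta,\eta')=\pi(\eta')\,c(\eta',\eta)$ together with $\pi>0$ then forces $c(\eta,\eta')>0$, i.e.\ $\mu_{i,j}>0$. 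This settles the claim $\mu_{i,j}>0$ and, crucially, guarantees that every edge of the transition graph is present in both directions so that cycle products below are well defined and may be divided.

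Next I would treat the case $K\ge 3$ and prove $\mu_{i,j}=\mu_j$, i.e.\ that the rate depends on the target only. For distinct types $a,b,c$ and any admissible base state $\eta$ with $\eta(a)\ge 1$, consider the triangular cycle
\[
\eta\;\longrightarrow\;\eta-\mathbf{e}_a+\mathbf{e}_b\;\longrightarrow\;\eta-\mathbf{e}_a+\mathbf{e}_c\;\longrightarrow\;\eta .
\]
Writing out the three forward and three reverse rates and cancelling the common factor $\eta(a)\,(\eta(b)+1)\,(\eta(c)+1)$, Kolmogorov's criterion becomes
\[
\Bigl(\mu_{a,b}+\tfrac pN\eta(b)\Bigr)\Bigl(\mu_{b,c}+\tfrac pN\eta(c)\Bigr)\Bigl(\mu_{c,a}+\tfrac pN(\eta(a)-1)\Bigr)
=\Bigl(\mu_{c,b}+\tfrac pN\eta(b)\Bigr)\Bigl(\mu_{a,c}+\tfrac pN\eta(c)\Bigr)\Bigl(\mu_{b,a}+\tfrac pN(\eta(a)-1)\Bigr).
\]
Both sides are products of one affine factor in each of the three independent quantities $\eta(b)$, $\eta(c)$, $\eta(a)-1$; letting the base state vary so that these quantities range over enough values turns this into a multilinear polynomial identity, and matching the coefficients of the three degree--two monomials yields $\mu_{a,b}=\mu_{c,b}$, $\mu_{b,c}=\mu_{a,c}$ and $\mu_{c,a}=\mu_{b,a}$. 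Taking the triple $(a,b,c)=(i,j,i')$ for arbitrary sources $i\neq i'$ into a fixed target $j$ then gives $\mu_{i,j}=\mu_{i',j}$, i.e.\ $\mu_{i,j}=\mu_j$; positivity was shown above. For $K=2$ there is a single source into each target, so $Q$ is automatically of the form $Q_{\pmb{\mu}}$ and nothing beyond positivity is needed.

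The hard part will be making the coefficient--extraction legitimate \emph{for every fixed $N\ge 2$ simultaneously}, since the budget $\sum_k\eta(k)=N$ restricts how freely $\eta(a),\eta(b),\eta(c)$ may be varied; for $K=3$ and small $N$ there is no genuine product grid of base states. The remedy I would implement is to not rely on a single identity but on a sufficient finite subfamily of triangular cycles, distinguishing in particular the \emph{mutation--only} cycle based at $\eta$ with $\eta(a)=\eta(b)=1,\ \eta(c)=0$, where every reproduction term $\tfrac pN\eta(\cdot)$ vanishes and the identity reduces to the clean relation $\mu_{a,b}\mu_{b,c}\mu_{c,a}=\mu_{a,c}\mu_{c,b}\mu_{b,a}$, together with the three singly--perturbed cycles producing $\mu_{a,b}\,\mu_{b,c}\,\mu_{c,a}$--type equalities with exactly one reproduction term switched on. Combining these (e.g.\ subtracting the mutation--only relation) leaves a small positive algebraic system in the six rates whose only solution, given $p>0$ and positivity, is $\mu_{a,b}=\mu_{c,b}$, $\mu_{b,c}=\mu_{a,c}$, $\mu_{c,a}=\mu_{b,a}$; I expect the bulk of the writing to be this elementary but slightly delicate bookkeeping, uniform in $N$.
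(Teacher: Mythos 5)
Your overall strategy --- detailed balance at the pure states $N\mathbf{e}_i$ to get positivity of every $\mu_{i,j}$, then Kolmogorov's cycle criterion on short cycles near $N\mathbf{e}_i$ to extract parent independence --- is exactly the paper's, and your positivity step is correct and essentially identical to the one in the paper. The divergence is in the choice of cycles, and that is where your argument has a genuine gap. The paper uses a single \emph{quadrilateral} cycle $N\mathbf{e}_i \to (N-1)\mathbf{e}_i+\mathbf{e}_j \to (N-2)\mathbf{e}_i+\mathbf{e}_j+\mathbf{e}_k \to (N-1)\mathbf{e}_i+\mathbf{e}_k \to N\mathbf{e}_i$: the two rate products around it differ only in how the factors $\mu_{j,i}+p\tfrac{N-1}{N}$, $\mu_{j,i}+p\tfrac{N-2}{N}$, $\mu_{k,i}+p\tfrac{N-1}{N}$, $\mu_{k,i}+p\tfrac{N-2}{N}$ are paired, so the criterion collapses in one line to $p(N-1)\mu_{i,j}\mu_{i,k}(\mu_{j,i}-\mu_{k,i})=0$ for every $N\ge 2$, with no case analysis. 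Your triangular cycles yield a correct identity, but the coefficient extraction you invoke is only legitimate when $\eta(b)$, $\eta(c)$, $\eta(a)-1$ can be varied over a genuine product grid, which fails exactly in the case $K=3$ with small $N$ that you yourself flag as the hard part.

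Your proposed remedy for that case does not work as described. At the state with $\eta(a)=\eta(b)=1$, $\eta(c)=0$ the reproduction term $\tfrac{p}{N}\eta(b)=\tfrac{p}{N}$ does \emph{not} vanish, and for $K=3$ and $N\ge 2$ there is no mutation-only triangle at all, since all $N$ units of mass must sit on one of $a,b,c$; so the step ``subtract the mutation-only relation'' cannot be executed, and the assertion that the remaining system forces the conclusion is left unproved --- it is not a routine cancellation. The gap is closable, though: writing $A=\mu_{a,b}$, $B=\mu_{b,c}$, $C=\mu_{c,a}$, $A'=\mu_{c,b}$, $B'=\mu_{a,c}$, $C'=\mu_{b,a}$, the three cycles based at $(N,0,0)$, $(N-1,1,0)$, $(N-1,0,1)$ give $(A+x)BC=(A'+x)B'C'$, $A(B+y)C=A'(B'+y)C'$, $AB(C+z)=A'B'(C'+z)$ with $x,y,z>0$, so the three differences $B'C'-BC$, $A'C'-AC$, $A'B'-AB$ all have the sign of $ABC-A'B'C'$; multiplying the three pairwise products then forces all differences to vanish, and dividing by the (positive) pairwise products gives $A=A'$, $B=B'$, $C=C'$. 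You would need to supply this monotonicity argument (or switch to the paper's quadrilateral) to make the proof complete.
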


\begin{proof}
We first prove that if the process is reversible, then all the entries of the mutation matrix are positive. Let us denote by $\nu_{N,p}$ the stationary probability measure of the  process driven by $\mathcal{Q}_{N,p}$, which is assumed to be reversible. 
We denote $\mathcal{Q}_{N,p}[\eta, \xi] := (\mathcal{Q}_{N,p} \delta_{\xi}) (\eta)$, for all $\eta, \xi \in \mathcal{E}_{K,N}$.
Consider the states $\eta^{(1)}$ and $\eta^{(2)}$ defined as $\eta^{(1)} := N \mathbf{e}_i$ and $\eta^{(2)} := \eta^{(1)} - \mathbf{e}_i + \mathbf{e}_j $, 
for $i,j \in \left[ K \right]$ such that $i \neq j$. Since the process is reversible, the measure $\nu_{N}$ satisfies the balance equation 
\[
\nu_{N,p}(\eta^{(1)}) \mathcal{Q}_{N,p}[\eta^{(1)}, \eta^{(2)}] = \nu_{N,p}(\eta^{(2)}) \mathcal{Q}_{N,p}[\eta^{(2)}, \eta^{(1)}],
\]
see e.g.\ \cite[Thm\. 1.3]{kelly_reversibility_1979}.
We have 
\begin{align*}
	\mathcal{Q}_{N,p}[\eta^{(1)}, \eta^{(2)}] &= N \mu_{i,j}, \text{ and }\\
	\mathcal{Q}_{N,p}[\eta^{(2)}, \eta^{(1)}] &= \mu_{j,i} + p (N-1)/N > 0.
\end{align*}
Furthermore, since the process is irreducible we have that $\nu_{N}(\eta) > 0$, for all $\eta \in \mathcal{E}_{K,N}$. 
Finally, the balance equation implies that $\mu_{i,j} > 0$, for all $i \neq j$.

Now, we prove that for every $j \in \left[ K \right]$ we have $\mu_{i,j} = \mu_j > 0$, for all $i \in \left[ K \right]$. For $K = 2$, there is nothing to prove.
For $K \geq 3, \; N \geq 2$, let us consider a general model with a reversible stationary probability.
Let $i,j,k$ be three different indices on $\left[ K \right]$ and consider the four states $\eta^{(1)}$, $\eta^{(2)}$, $\eta^{(3)}$ and $\eta^{(4)}$ in $\mathcal{E}_{K,N}$ defined by
\[
\eta^{(1)} := N \mathbf{e}_i, \;\;\; 
\eta^{(2)} := \eta^{(1)} - \mathbf{e}_i + \mathbf{e}_j, \;\;\; 
\eta^{(3)} := \eta^{(1)}  - 2\, \mathbf{e}_i + \mathbf{e}_j + \mathbf{e}_k, \;\;\;
\eta^{(4)} :=  \eta^{(1)}  - \mathbf{e}_i + \mathbf{e}_k.
\]
Note that
\begin{equation*}
\begin{array}{ll}
\mathcal{Q}_{N,p}[\eta^{(1)},\eta^{(2)}] = N \mu_{i,j},         & \mathcal{Q}_{N,p}[\eta^{(2)},\eta^{(1)}] = \mu_{j,i} + (N-1)p/N,  \\
\mathcal{Q}_{N,p}[\eta^{(2)},\eta^{(3)}] = (N-1) \mu_{i,k},     & \mathcal{Q}_{N,p}[\eta^{(3)},\eta^{(2)}] = \mu_{k,i} + (N-2)p/N, \\
\mathcal{Q}_{N,p}[\eta^{(3)},\eta^{(4)}] = \mu_{j,i} + (N-2)p/N,  & \mathcal{Q}_{N,p}[\eta^{(4)},\eta^{(3)}] = (N-1) \mu_{i,j},   \\
\mathcal{Q}_{N,p}[\eta^{(4)},\eta^{(1)}] = \mu_{k,i} + (N-1)p/N,  & \mathcal{Q}_{N,p}[\eta^{(1)},\eta^{(4)}] = N \mu_{i,k}.
\end{array}
\end{equation*}
Then,
\begin{align*}
\frac{  \mathcal{Q}_{N,p}[\eta^{(4)},\eta^{(1)}]}{N (N-1)} \prod_{r = 1}^3  \mathcal{Q}_{N,p}[\eta^{(r)},\eta^{(r+1)}]  &=
\mu_{i,j} \mu_{i,k} \left(\mu_{j,i} + p \frac{N-2}{N} \right)\left(\mu_{k,i} + p \frac{N-1}{N} \right), \\
\frac{\mathcal{Q}_{N,p}[\eta^{(1)},\eta^{(4)}] }{N (N-1)} \prod_{r = 1}^3  \mathcal{Q}_{N,p}[\eta^{(r+1)},\eta^{(r)}] &= \mu_{i,k} \mu_{i,j}\left(\mu_{j,i} + p \frac{N-1}{N} \right)\left(\mu_{k,i} + p \frac{N-2}{N} \right).
\end{align*}
Therefore, since the stationary probability is reversible, the \emph{Kolmogorov cycle reversibility criterion} \cite[Thm.\ 1.8]{kelly_reversibility_1979} holds: 
\[
	 \mathcal{Q}_{N,p}[\eta^{(4)},\eta^{(1)}] \prod_{r = 1}^3  \mathcal{Q}_{N,p}[\eta^{(r)},\eta^{(r+1)}]  = \mathcal{Q}_{N,p}[\eta^{(1)},\eta^{(4)}] \prod_{r = 1}^3  \mathcal{Q}_{N,p}[\eta^{(r+1)},\eta^{(r)}],
\]
and we get $p (N-1) \mu_{i,j} \mu_{i,k} (\mu_{j,i} - \mu_{k,i}) = 0$. We know that $\mu_{i,j} > 0$ for all $i,j \in \left[ K \right]$, thus $\mu_{j,i} = \mu_{k,i}$, for all $j,k \in \left[ K \right]$, with $j \neq k$, and every $i \in \left[ K \right]$, with $i \notin \{j,k\}$. Denoting $\mu_j := \mu_{i,j}$ for any $i \in \left[ K \right]$, with $i \neq j$, we prove that the mutation matrix is of the form of $Q_{\pmb{\mu}}$ for a suitable vector $\pmb{\mu}$.

\end{proof}

It remains to prove that the stationary distribution of $\mathcal{L}_{N,p}$ is compound Dirichlet multinomial with suitable parameters. 
Actually, a more general version of Lemma \ref{thm_reversible_distrib} can be proved, where the values of the parameter $p$ in (\ref{def:generatorQKNp_reversible}) also depend on $j$, i.e.\ a model with selection at birth or fecundity selection \cite{muirhead_modeling_2009}. Abusing notation, for two vectors $\pmb{p} = (p_1, p_2, \dots, p_K)$ and $\pmb{\mu} = (\mu_1, \mu_2, \dots, \mu_K)$ such that $p_j, \mu_j > 0$, for all $j \in [K]$, let us denote by $\mathcal{L}_{N,\pmb{p}}$ the infinitesimal generator satisfying
\begin{equation}\label{def:generatorQKNp_reversible_gnral}
	(\mathcal{L}_{N,\pmb{p}} f) (\eta) := \sum_{i,j=1}^{K} \eta(i)
\left( \mu_j + p_j \frac{\eta(j)}{N} \right)
\left[ f(  \eta - \mathbf{e}_i + \mathbf{e}_j ) - f(\eta)\right],
\end{equation}
for every function $f$ on $\mathcal{E}_{K,N}$ and all $\eta \in \mathcal{E}_{K,N}$.
We define the \emph{weighted Dirichlet-compound multinomial distribution} with parameters $N$, $\pmb{\mu}$ and $\pmb{p}$, denoted $\mathcal{WDM}(\cdot \mid N,\pmb{\mu},\pmb{p})$, as follows
\begin{equation}\label{def:weighted_compound_Dirichlet_multinomial}
	\mathcal{WDM}(\eta \mid N,\pmb{\mu},\pmb{p}) := Z^{-1} \binom{N}{\eta} \prod_{k=1}^K p_k^{\eta(k)} (\alpha_k)_{(\eta(k))},
\end{equation}
for all $\eta \in \mathcal{E}_{K,N}$, where $\alpha_k = \mu_k/p_k$, for all $k \in \left[ K \right]$ and $Z$ is a normalisation constant satisfying
\begin{equation}\label{eq:normalisation_constant_weigthed_CMD}
	Z = \mathbb{E}\left[ \left( \sum_{j=1}^K p_j X_j \right)^N \right],
\end{equation}
where $(X_1,X_2, \dots, X_K)$ follows a $ \mathcal{DM}( \cdot \mid N, N \pmb{\mu})$. Note that the measure defined by (\ref{def:weighted_compound_Dirichlet_multinomial}) with the normalisation constant (\ref{eq:normalisation_constant_weigthed_CMD}) is a probability distribution. 
See \cite{johnson_discrete_1997} and \cite{navarro_multivariate_2006} for more details about the weighted multinomial distributions.

\begin{lemma}[Reversible probability of $\mathcal{L}_{N,\pmb{p}}$]\label{thm_reversible_distrib_gnral}
The process driven by (\ref{def:generatorQKNp_reversible_gnral}) is reversible and its stationary distribution is $\mathcal{WDM}(\cdot \mid N, \pmb{\alpha}, \pmb{p})$, where $\alpha_k = N \mu_k$, for all $k \in \left[ K \right]$.
\end{lemma}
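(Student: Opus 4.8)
The plan is to verify directly that the \emph{weighted Dirichlet-compound multinomial distribution} satisfies the detailed balance equations for $\mathcal{L}_{N,\pmb{p}}$; reversibility with respect to a probability measure immediately yields that this measure is stationary, and irreducibility of the process then guarantees it is the unique stationary distribution.

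First I would record the elementary transitions. From (\ref{def:generatorQKNp_reversible_gnral}), the only states reachable from $\eta$ in one jump are of the form $\xi = \eta - \mathbf{e}_i + \mathbf{e}_j$ with $i \neq j$ and $\eta(i) \geq 1$, and the corresponding off-diagonal rate is $\mathcal{L}_{N,\pmb{p}}[\eta,\xi] = \eta(i)\big(\mu_j + p_j \eta(j)/N\big)$, while the reverse jump $\xi \to \eta$ carries rate $\mathcal{L}_{N,\pmb{p}}[\xi,\eta] = (\eta(j)+1)\big(\mu_i + p_i (\eta(i)-1)/N\big)$, which is strictly positive because $\mu_i > 0$. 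Hence detailed balance on each such neighbouring pair is meaningful, and since the chain moves only through these elementary moves (for non-adjacent states both rates vanish), verifying detailed balance on all of them is equivalent to reversibility.

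Next I would take as candidate the measure $\nu(\eta) = Z^{-1}\binom{N}{\eta}\prod_{k=1}^K p_k^{\eta(k)}(\beta_k)_{(\eta(k))}$ with $\beta_k := N\mu_k/p_k$, which is exactly $\mathcal{WDM}(\cdot \mid N, N\pmb{\mu}, \pmb{p})$ in the notation of (\ref{def:weighted_compound_Dirichlet_multinomial}) (its internal parameter being $\alpha_k = N\mu_k/p_k$, matching $\alpha_k = N\mu_k$ divided by $p_k$). The heart of the computation is the ratio $\nu(\eta)/\nu(\xi)$ for $\xi = \eta - \mathbf{e}_i + \mathbf{e}_j$. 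Using $\binom{N}{\eta}/\binom{N}{\xi} = (\eta(j)+1)/\eta(i)$, the increasing-factorial identities $(\beta_i)_{(\eta(i))}/(\beta_i)_{(\eta(i)-1)} = \beta_i + \eta(i) - 1$ and $(\beta_j)_{(\eta(j))}/(\beta_j)_{(\eta(j)+1)} = 1/(\beta_j+\eta(j))$, and the ratio $p_i/p_j$ of the power factors, I obtain
\[
\frac{\nu(\eta)}{\nu(\xi)} = \frac{\eta(j)+1}{\eta(i)}\cdot\frac{p_i}{p_j}\cdot\frac{\beta_i+\eta(i)-1}{\beta_j+\eta(j)}.
\]
Then I rewrite each rate factor as $\mu_j + p_j\eta(j)/N = \tfrac{p_j}{N}(\beta_j+\eta(j))$ and $\mu_i + p_i(\eta(i)-1)/N = \tfrac{p_i}{N}(\beta_i + \eta(i) - 1)$, which is precisely where the choice $\beta_k = N\mu_k/p_k$ is forced; substituting shows that $\nu(\eta)/\nu(\xi)$ equals $\mathcal{L}_{N,\pmb{p}}[\xi,\eta]/\mathcal{L}_{N,\pmb{p}}[\eta,\xi]$, i.e.\ $\nu(\eta)\mathcal{L}_{N,\pmb{p}}[\eta,\xi] = \nu(\xi)\mathcal{L}_{N,\pmb{p}}[\xi,\eta]$.

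Finally I would note that $\nu$ is a genuine probability measure, its normalisation being the constant $Z$ of (\ref{eq:normalisation_constant_weigthed_CMD}), so that detailed balance makes $\nu$ reversible, hence stationary, for $\mathcal{L}_{N,\pmb{p}}$; irreducibility gives uniqueness. The main (and essentially only) obstacle is the bookkeeping of the increasing-factorial and multinomial ratios; once the rate factors are written in the form $\tfrac{p_k}{N}(\beta_k + \cdot)$ the verification is algebraically forced and transparent. The parent-independent, constant-$\pmb{p}$ case $\mathcal{L}_{N,p}$ required for Lemma \ref{thm_reversible_distrib} then follows by specialising $p_k \equiv p$: the constant factor $p^{|\eta|}=p^N$ is absorbed into $Z$ and $\mathcal{WDM}$ collapses to the Dirichlet multinomial $\mathcal{DM}(\cdot \mid N, N\pmb{\mu}/p)$ of (\ref{def:nuKNp}).
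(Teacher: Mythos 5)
Your proposal is correct and follows essentially the same route as the paper: both verify detailed balance of the candidate weighted Dirichlet-compound multinomial measure against the elementary transitions $\eta \to \eta - \mathbf{e}_i + \mathbf{e}_j$, and conclude stationarity and uniqueness from irreducibility. Your reorganisation of the computation as a ratio $\nu(\eta)/\nu(\xi)$, with the rates rewritten in the form $\tfrac{p_k}{N}(\beta_k + \cdot)$, is just a tidier bookkeeping of the same factorial cancellations the paper carries out term by term.
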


\begin{remark}
	This result is known for multi-allelic Moran models with parent independent mutation. See e.g.\ \cite[Section 3]{etheridge_coalescent_2009}. However, we have not found a proof in the literature. So, for the sake of completeness we provide a proof. When the vector $\pmb{p}$ is constant we obtain the stationary distribution of the neutral case and we thus conclude the proof of Lemma \ref{thm_reversible_distrib}.
\end{remark}

\begin{proof}[Proof of Lemma \ref{thm_reversible_distrib_gnral}]
Let us define $q_k := p_k / N$, for $k \in \left[ K \right]$ and, abusing notation, $\mathcal{L}_{N,\pmb{p}}[\eta, \xi] := \mathcal{L}_{N,\pmb{p}} \delta_{\xi}(\eta)$, for all $\eta, \xi \in \mathcal{E}_{K,N}$.
Note that for $\eta, \xi \in \mathcal{E}_{K,N}$ with $\eta \neq \xi$, we have $\mathcal{L}_{N,\pmb{p}}[\eta,\xi] \neq 0$ if and only if there exist $i,j \in \left[ K \right]$, such that
$i \neq j$, $\eta(i) >0$ and $\xi = \eta  - \mathbf{e}_i + \mathbf{e}_j$. In this case
\[
	\mathcal{L}_{N,\pmb{p}}[\eta,\xi] = \eta(i) [\mu_j + \eta(j) q_j].
\] 
This implies that $\xi(j)=\eta(j)+1 >0$ and $\eta = \xi  - \mathbf{e}_j + \mathbf{e}_i$. As a consequence 
\[
	\mathcal{L}_{N,\pmb{p}}[\xi,\eta] = \xi(j) [\mu_i + \xi(i) q_i] = (\eta(j) + 1) [\mu_i + (\eta(i)-1) q_i].
\] 
Also $\eta(k)=\xi(k)$, for all $k\neq i, \; k\neq j$.

Therefore we get,
\begin{align}
Z \cdot \mathcal{WDM}(\eta \mid N,\pmb{\mu},\pmb{p}) & \mathcal{L}_{N,\pmb{p}}[\eta,\xi]  =
\binom{  N }{ \eta } \left[ \prod_{k=1}^K p_k^{\eta(k)} \left( \frac{\mu_k}{q_k} \right)_{(\eta(k))} \right]
\eta(i) [\mu_j + \eta(j) q_j] \nonumber  \\
&=
\frac{N!}{\prod\limits_{k \notin \{i, j\}} \eta(k)!} \; \frac{1}{\eta(i)! \eta(j) !} \; \left[
\prod_{k =1 }^K \prod_{l=0}^{\eta(k) -1} (\mu_k + l \, q_k) \right] \eta(i) [\mu_j + \eta(j) \, q_j], \label{eq:demnu}
\end{align}
where $Z$ is the normalisation constant given by (\ref{eq:normalisation_constant_weigthed_CMD}). Note that
\begin{equation}
\frac{N!}{\prod\limits_{k \notin \{i,j\} } \eta(k)!} \prod_{k \notin \{i,j\}}^K \prod_{l=0}^{\eta(k) -1} (\mu_k + l \, q_k) 
=
\frac{N!}{\prod\limits_{k \notin \{i,j\}} \xi(k)!} \prod_{k \notin \{i,j\}}^K \prod_{l=0}^{\xi(k) -1} (\mu_k + l \, q_k), \label{eq:nu1}
\end{equation}
because $\eta(k)=\xi(k)$, for  $k \notin \{ i, j\}$. 
Moreover,
\begin{equation}
 \frac{1}{\eta(i)! \, \eta(j) !} \eta(i) = \frac{1}{(\eta(i)-1)! \, \eta(j) !}
 = \frac{1}{\xi(i)! \, (\xi(j) - 1) !}
 = \frac{1}{\xi(i)! \, \xi(j) !} \; \xi(j), \label{eq:nu2}
\end{equation}
because $\xi(i) = \eta(i)-1$ and $\xi(j) = \eta(j) +1$.
In addition,
\begin{equation}
\prod_{l=0}^{\eta(i) -1} (\mu_i + l \, q_i) = \prod_{l=0}^{\xi(i)} (\mu_i + l \, q_i) = (\mu_i + \xi(i) \, q_i) \prod_{l=0}^{\xi(i)-1} (\mu_i + l \, q_i),
\label{eq:nu3}
\end{equation}
and 
\begin{equation}
\left[ \prod_{l=0}^{\eta(j) -1} (\mu_j + l \, q_j) \right] [\mu_j + \eta(j) \, q_j] = \prod_{l=0}^{\eta(j)} (\mu_j + l \, q_j)
=  \prod_{l=0}^{\xi(j)-1} (\mu_j + l \, q_j).\label{eq:nu4}
\end{equation}

Using (\ref{eq:nu1}), (\ref{eq:nu2}), (\ref{eq:nu3}) and (\ref{eq:nu4}) in (\ref{eq:demnu}) gives
\begin{align*}
	\mathcal{WDM}(\eta \mid N,\pmb{\mu},\pmb{p})  \mathcal{L}_{N,\pmb{p}}[\eta,\xi] &= Z^{-1} \binom{N}{\xi} \left[ \prod_{k = 1}^K p_k^{\xi(k)} \left( \frac{\mu_k}{p_k} \right)_{(\xi(k))} \right] \xi(j) [\mu_i + \xi(i) q_i ] \\
		&=  \mathcal{WDM}(\xi \mid N,\pmb{\mu},\pmb{p})  \mathcal{L}_{N,\pmb{p}}[\xi,\eta],
\end{align*}
for all $\eta, \xi \in \mathcal{E}_{K,N}$.
The distribution $\nu_{N}$ satisfies the detailed balance property, thus it is reversible for $\mathcal{L}_{N,\pmb{p}}$,  and it is the unique stationary measure, because the process generated by $\mathcal{L}_{N,\pmb{p}}$ is irreducible.

\end{proof}

\section*{Acknowledgement}

The research presented in this paper was mostly conducted while the author was a PhD student at Université Paris-Dauphine and the Institut de Mathématiques de Toulouse.
The author was partially supported by the ITI IRMIA++.
The author would like to thank Djalil Chafa\"{i}, Simona Grusea and Didier Pinchon for their encouragement and many fruitful discussions on this research. 
The author would also like to extend his gratitude to Vlada Limic, Denis Villemonais, Hua Zhou and the two anonymous reviewers for their careful reading and helpful comments, that greatly improved the quality of this manuscript.

%\bibliographystyle{plain}% {apalike} amsalpha
%\bibliography{bibliography}

\begin{thebibliography}{10}
	
	\bibitem{aldous_random_1983}
	D.~Aldous.
	\newblock Random walks on finite groups and rapidly mixing {Markov} chains.
	\newblock In {\em Seminar on probability, {XVII}}, volume 986 of {\em Lecture
		Notes in Math.}, pages 243--297. Springer, Berlin, 1983.
	
	\bibitem{aldous_shuffling_1986}
	D.~Aldous and P.~Diaconis.
	\newblock Shuffling cards and stopping times.
	\newblock {\em Amer. Math. Monthly}, 93(5):333--348, 1986.
	
	\bibitem{asselah_quasistationary_2011}
	A.~Asselah, P.~A. Ferrari, and P.~Groisman.
	\newblock Quasistationary distributions and {F}leming\,--\,{V}iot processes in
	finite spaces.
	\newblock {\em J. Appl. Probab.}, 48(2):322--332, 2011.
	
	\bibitem{barrera_cut-off_2006}
	J.~Barrera, B.~Lachaud, and B.~Ycart.
	\newblock Cut-off for {$n$}-tuples of exponentially converging processes.
	\newblock {\em Stochastic Process. Appl.}, 116(10):1433--1446, 2006.
	
	\bibitem{bremaud_markov_2020}
	P.~Br{\'e}maud.
	\newblock {\em Markov chains}, volume~31 of {\em Texts in Applied Mathematics}.
	\newblock Springer, Cham, second edition, 2020.
	\newblock Gibbs fields, Monte Carlo simulation and queues.
	
	\bibitem{chen_cutoff_2006}
	G.-Y. Chen.
	\newblock {\em The cutoff phenomenon for finite {Markov} chains}.
	\newblock PhD thesis, Cornell University, 2006.
	
	\bibitem{chen_l2-cutoffs_2017}
	G.-Y. Chen, J.-M. Hsu, and Y.-C. Sheu.
	\newblock The {$L^2$}-cutoffs for reversible {M}arkov chains.
	\newblock {\em Ann. Appl. Probab.}, 27(4):2305--2341, 2017.
	
	\bibitem{chen_cutoffs_2018}
	G.-Y. Chen and T.~Kumagai.
	\newblock Cutoffs for product chains.
	\newblock {\em Stochastic Process. Appl.}, 128(11):3840--3879, 2018.
	
	\bibitem{chen_cutoff_2008}
	G.-Y. Chen and L.~Saloff-Coste.
	\newblock The cutoff phenomenon for ergodic {Markov} processes.
	\newblock {\em Electron. J. Probab.}, 13:no. 3, 26--78, 2008.
	
	\bibitem{chen_l2-cutoff_2010}
	G.-Y. Chen and L.~Saloff-Coste.
	\newblock The ${L}^2$-cutoff for reversible {Markov} processes.
	\newblock {\em J. Funct. Anal.}, 258(7):2246--2315, 2010.
	
	\bibitem{cloez_fleming-viot_2016}
	B.~Cloez and M.-N. Thai.
	\newblock Fleming\,--\,{V}iot processes: two explicit examples.
	\newblock {\em ALEA Lat. Am. J. Probab. Math. Stat.}, 13(1):337--356, 2016.
	
	\bibitem{cloez_quantitative_2016}
	B.~Cloez and M.-N. Thai.
	\newblock Quantitative results for the {F}leming\,--\,{V}iot particle system
	and quasi\,--\,stationary distributions in discrete space.
	\newblock {\em Stochastic Process. Appl.}, 126(3):680--702, 2016.
	
	\bibitem{connor_separation_2010}
	S.~B. Connor.
	\newblock Separation and coupling cutoffs for tuples of independent {Markov}
	processes.
	\newblock {\em ALEA Lat. Am. J. Probab. Math. Stat.}, 7:65--77, 2010.
	
	\bibitem{corujo_dynamics_2020}
	J.~Corujo.
	\newblock Dynamics of a {F}leming\,--\,{V}iot type particle system on the cycle
	graph.
	\newblock {\em Stochastic Process. Appl.}, 136:57--91, 2021.
	
	\bibitem{MR2850401}
	D.~Couty, J.~Esterle, and R.~Zarouf.
	\newblock D\'{e}composition effective de {J}ordan\,--\,{C}hevalley.
	\newblock {\em Gaz. Math.}, (129):29--49, 2011.
	
	\bibitem{davis_circulant_1979}
	P.~J. Davis.
	\newblock {\em Circulant matrices}.
	\newblock John Wiley \& Sons, New York-Chichester-Brisbane, 1979.
	\newblock A Wiley-Interscience Publication, Pure and Applied Mathematics.
	
	\bibitem{MR3258404}
	P.~Diaconis and R.~Griffiths.
	\newblock An introduction to multivariate {K}rawtchouk polynomials and their
	applications.
	\newblock {\em J. Statist. Plann. Inference}, 154:39--53, 2014.
	
	\bibitem{MR3915331}
	P.~Diaconis and R.~C. Griffiths.
	\newblock Reproducing kernel orthogonal polynomials on the multinomial
	distribution.
	\newblock {\em J. Approx. Theory}, 242:1--30, 2019.
	
	\bibitem{2008Gibbs}
	P.~Diaconis, K.~Khare, and L.~Saloff-Coste.
	\newblock {Gibbs Sampling, Exponential Families and Orthogonal Polynomials}.
	\newblock {\em Statistical Science}, 23(2):151 -- 178, 2008.
	
	\bibitem{diaconis_generating_1981}
	P.~Diaconis and M.~Shahshahani.
	\newblock Generating a random permutation with random transpositions.
	\newblock {\em Z. Wahrsch. Verw. Gebiete}, 57(2):159--179, 1981.
	
	\bibitem{donnelly_convergence_2000}
	P.~Donnelly and E.~R. Rodrigues.
	\newblock Convergence to stationarity in the {M}oran model.
	\newblock {\em J. Appl. Probab.}, 37(3):705--717, 2000.
	
	\bibitem{durrett_probability_2008}
	R.~Durrett.
	\newblock {\em Probability models for {DNA} sequence evolution}.
	\newblock Probability and its Applications (New York). Springer, New York,
	second edition, 2008.
	
	\bibitem{etheridge_mathematical_2011}
	A.~Etheridge.
	\newblock {\em Some mathematical models from population genetics}, volume 2012
	of {\em Lecture Notes in Mathematics}.
	\newblock Springer, Heidelberg, 2011.
	\newblock Lectures from the 39th Probability Summer School held in Saint-Flour,
	2009, \'{E}cole d'\'{E}t\'{e} de Probabilit\'{e}s de Saint-Flour.
	[Saint-Flour Probability Summer School].
	
	\bibitem{etheridge_coalescent_2009}
	A.~M. {Etheridge} and R.~C. {Griffiths}.
	\newblock {A coalescent dual process in a Moran model with genic selection}.
	\newblock {\em {Theor. Popul. Biol.}}, 75(4):320--330, 2009.
	
	\bibitem{ethier_fleming-viot_1993}
	S.~N. Ethier and T.~G. Kurtz.
	\newblock Fleming\,--\,{V}iot processes in population genetics.
	\newblock {\em SIAM J. Control Optim.}, 31(2):345--386, 1993.
	
	\bibitem{ferrari_quasi_2007}
	P.~Ferrari and N.~Mari\'{c}.
	\newblock Quasi {S}tationary {D}istributions and {F}leming\,--\,{V}iot
	processes in countable spaces.
	\newblock {\em Electron. J. Probab.}, 12:no. 24, 684--702, 2007.
	
	\bibitem{zbMATH06387922}
	R.~C. {Griffiths}.
	\newblock {The \(\lambda\)-Fleming\,--\,Viot process and a connection with
		Wright\,--\,Fisher diffusion}.
	\newblock {\em {Adv. Appl. Probab.}}, 46(4):1009--1035, 2014.
	
	\bibitem{MR3037164}
	R.~C. Griffiths and D.~Span\`o.
	\newblock Orthogonal polynomial kernels and canonical correlations for
	{D}irichlet measures.
	\newblock {\em Bernoulli}, 19(2):548--598, 2013.
	
	\bibitem{MR884486}
	G.~Harris and C.~Martin.
	\newblock The roots of a polynomial vary continuously as a function of the
	coefficients.
	\newblock {\em Proc. Amer. Math. Soc.}, 100(2):390--392, 1987.
	
	\bibitem{iliev_discrete_2007}
	P.~Iliev and Y.~Xu.
	\newblock Discrete orthogonal polynomials and difference equations of several
	variables.
	\newblock {\em Adv. Math.}, 212(1):1--36, 2007.
	
	\bibitem{ismail_classical_2005}
	M.~E.~H. Ismail.
	\newblock {\em Classical and quantum orthogonal polynomials in one variable},
	volume~98 of {\em Encyclopedia of Mathematics and its Applications}.
	\newblock Cambridge University Press, Cambridge, 2005.
	\newblock With two chapters by Walter Van Assche, With a foreword by Richard A.
	Askey.
	
	\bibitem{johnson_univariate_2005}
	N.~L. Johnson, A.~W. Kemp, and S.~Kotz.
	\newblock {\em Univariate discrete distributions}.
	\newblock Wiley Series in Probability and Statistics. Wiley-Interscience [John
	Wiley \& Sons], Hoboken, NJ, third edition, 2005.
	
	\bibitem{johnson_discrete_1997}
	N.~L. Johnson, S.~Kotz, and N.~Balakrishnan.
	\newblock {\em Discrete multivariate distributions}.
	\newblock Wiley Series in Probability and Statistics: Applied Probability and
	Statistics. John Wiley \& Sons, Inc., New York, 1997.
	\newblock A Wiley-Interscience Publication.
	
	\bibitem{MR184284}
	S.~Karlin and J.~McGregor.
	\newblock Ehrenfest urn models.
	\newblock {\em J. Appl. Probability}, 2:352--376, 1965.
	
	\bibitem{MR0406574}
	S.~Karlin and J.~McGregor.
	\newblock Linear growth models with many types and multidimensional {H}ahn
	polynomials.
	\newblock In {\em Theory and application of special functions ({P}roc.
		{A}dvanced {S}em., {M}ath. {R}es. {C}enter, {U}niv. {W}isconsin, {M}adison,
		{W}is., 1975)}, pages 261--288. Math. Res. Center, Univ. Wisconsin, Publ. No.
	35, 1975.
	
	\bibitem{kelly_reversibility_1979}
	F.~P. Kelly.
	\newblock {\em Reversibility and stochastic networks}.
	\newblock John Wiley \& Sons, Ltd., Chichester, 1979.
	\newblock Wiley Series in Probability and Mathematical Statistics.
	
	\bibitem{khare_rates_2009}
	K.~Khare and H.~Zhou.
	\newblock Rates of convergence of some multivariate {M}arkov chains with
	polynomial eigenfunctions.
	\newblock {\em Ann. Appl. Probab.}, 19(2):737--777, 2009.
	
	\bibitem{koekoek_hypergeometric_2010}
	R.~Koekoek, P.~A. Lesky, and R.~F. Swarttouw.
	\newblock {\em Hypergeometric orthogonal polynomials and their
		{$q$}-analogues}.
	\newblock Springer Monographs in Mathematics. Springer-Verlag, Berlin, 2010.
	\newblock With a foreword by Tom H. Koornwinder.
	
	\bibitem{lacoin_product_2015}
	H.~Lacoin.
	\newblock A product chain without cutoff.
	\newblock {\em Electron. Commun. Probab.}, 20:no. 19, 9, 2015.
	
	\bibitem{levin_markov_2017}
	D.~A. Levin and Y.~Peres.
	\newblock {\em Markov chains and mixing times}.
	\newblock American Mathematical Society, Providence, RI, 2017.
	\newblock Second edition of [ MR2466937], With contributions by E. L. Wilmer,
	With a chapter on ``Coupling from the past'' by J. G. Propp and D. B. Wilson.
	
	\bibitem{li_reversibility_1999}
	Z.~{Li}, T.~{Shiga}, and L.~{Yao}.
	\newblock {A reversibility problem for Fleming\,--\,Viot processes}.
	\newblock {\em {Electron. Commun. Probab.}}, 4:71--82, 1999.
	
	\bibitem{MR1952453}
	L.~Lov\'{a}sz, J.~Pelik\'{a}n, and K.~Vesztergombi.
	\newblock {\em Discrete mathematics}.
	\newblock Undergraduate Texts in Mathematics. Springer-Verlag, New York, 2003.
	\newblock Elementary and beyond.
	
	\bibitem{moran_random_1958}
	P.~A.~P. Moran.
	\newblock Random processes in genetics.
	\newblock {\em Proc. Cambridge Philos. Soc.}, 54:60--71, 1958.
	
	\bibitem{mosimann_compound_1962}
	J.~E. Mosimann.
	\newblock On the compound multinomial distribution, the multivariate {$\beta
		$}-distribution, and correlations among proportions.
	\newblock {\em Biometrika}, 49:65--82, 1962.
	
	\bibitem{muirhead_modeling_2009}
	C.~A. Muirhead and J.~Wakeley.
	\newblock Modeling multiallelic selection using a {M}oran model.
	\newblock {\em Genetics}, 182(4):1141--1157, 2009.
	
	\bibitem{navarro_multivariate_2006}
	J.~Navarro, J.~M. Ruiz, and Y.~del Aguila.
	\newblock Multivariate weighted distributions: a review and some extensions.
	\newblock {\em Statistics}, 40(1):51--64, 2006.
	
	\bibitem{2020arXiv200513437N}
	E.~{Nestoridi} and S.~{Olesker-Taylor}.
	\newblock {Limit Profiles for Reversible Markov Chains}.
	\newblock {\em arXiv e-prints}, page arXiv:2005.13437, May 2020.
	
	\bibitem{2021Nestoridi}
	E.~Nestoridi and S.~Olesker-Taylor.
	\newblock Limit profiles for reversible {Markov} chains.
	\newblock {\em Probab. Theory Relat. Fields}, 182(1-2):157--188, 2022.
	
	\bibitem{IEEE_chi-square}
	F.~Nielsen and R.~Nock.
	\newblock On the chi square and higher-order chi distances for approximating
	$f$-divergences.
	\newblock {\em IEEE Signal Processing Letters}, 21(1):10--13, 2014.
	
	\bibitem{pease_methods_1965}
	Marshall~C. Pease, III.
	\newblock {\em Methods of matrix algebra}.
	\newblock Mathematics in Science and Engineering. Vol. 16. Academic Press, New
	York-London, 1965.
	
	\bibitem{MR1490046}
	L.~Saloff-Coste.
	\newblock Lectures on finite {M}arkov chains.
	\newblock In {\em Lectures on probability theory and statistics
		({S}aint-{F}lour, 1996)}, volume 1665 of {\em Lecture Notes in Math.}, pages
	301--413. Springer, Berlin, 1997.
	
	\bibitem{SCHREMPF201788}
	D.~{Schrempf} and A.~{Hobolth}.
	\newblock {An alternative derivation of the stationary distribution of the
		multivariate neutral {W}right\,--\,{F}isher model for low mutation rates with
		a view to mutation rate estimation from site frequency data}.
	\newblock {\em {Theor. Popul. Biol.}}, 114:88--94, 2017.
	
	\bibitem{MR2744852}
	D.~Serre.
	\newblock {\em Matrices}, volume 216 of {\em Graduate Texts in Mathematics}.
	\newblock Springer, New York, second edition, 2010.
	\newblock Theory and applications.
	
	\bibitem{MR3296158}
	O.~Szehr, D.~Reeb, and M.~M. Wolf.
	\newblock Spectral convergence bounds for classical and quantum {M}arkov
	processes.
	\newblock {\em Comm. Math. Phys.}, 333(2):565--595, 2015.
	
	\bibitem{watterson_markov_1961}
	G.~A. Watterson.
	\newblock Markov chains with absorbing states: {A} genetic example.
	\newblock {\em Ann. Math. Statist.}, 32:716--729, 1961.
	
	\bibitem{Watterson1984}
	G.~A. Watterson.
	\newblock Lines of descent and the coalescent.
	\newblock {\em Theoretical Population Biology}, 26(1):77--92, 1984.
	
	\bibitem{ycart_cutoff_1999}
	B.~Ycart.
	\newblock Cutoff for samples of {M}arkov chains.
	\newblock {\em ESAIM Probab. Statist.}, 3:89--106, 1999.
	
	\bibitem{zhou_examples_2008}
	H.~Zhou.
	\newblock {\em Examples of {Multivariate} {Markov} {Chains} with {Orthogonal}
		{Polynomial} {Eigenfunctions}}.
	\newblock {PhD} {Thesis}, Stanford University, 2008.
	
	\bibitem{zhou_composition_2009}
	H.~Zhou and K.~Lange.
	\newblock Composition {M}arkov chains of multinomial type.
	\newblock {\em Adv. in Appl. Probab.}, 41(1):270--291, 2009.
	
\end{thebibliography}

\end{document}